\documentclass[a4paper,11pt,reqno]{amsart}
\usepackage{a4wide}
\usepackage[english]{babel}
\usepackage{amssymb}
\usepackage[left]{lineno}
\usepackage{hyperref}
\hypersetup{colorlinks,citecolor=red,filecolor=purple,linkcolor=blue,urlcolor=black}
\usepackage{graphicx}
\usepackage{enumerate}
\newcommand{\dif}{\mathrm{d}}
\newcommand{\Nat}{\mathbb N}
\newcommand{\R}{\mathbb R}
\newcommand{\Esp}{\mathbb E}

\newcommand{\p}{\mathbb P}
\newcommand{\px}{\mathbb{P}_x}
\newcommand{\ex}{\mathbb{E}_x}
\newcommand{\1}{\mathbf{1}\!}
\newcommand{\2}[1]{\mathbf{1}\!_{\{#1\}}}
\newcommand{\ea}{e_\alpha}

\newcommand{\as}{\quad\mathrm{ a.s.}}
\newcommand{\eps}{\varepsilon}
\newcommand{\ph}{\varphi}
\newcommand{\intpo}{\int_{(0,\infty)}}
\newcommand{\intpos}{\int_0^\infty}
\newcommand{\U}{\mathcal U}
\newcommand{\T}{\mathcal T}
\newcommand{\uu}{\mathbb U}
\newcommand{\pfl}{\p_x^\star}
\newcommand{\efl}{\Esp^\star_x}
\newcommand{\TT}{\mathbb T}
\newcommand{\rt}{\rho_t}
\newcommand{\post}{\overrightarrow{X}}
\newcommand{\pre}{\overleftarrow{X}}
\newtheorem{prop}{Proposition}[section]
\newtheorem{defi}[prop]{Definition}
\newtheorem{lem}[prop]{Lemma}
\newtheorem{thm}[prop]{Theorem}
\newtheorem{rem}[prop]{Remark}
\newtheorem{cor}[prop]{Corollary}
\author{Mathieu Richard}
\address{Laboratoire de Probabilit\'es et Mod\`eles Al\'eatoires, UMR 7599, Univ. Paris 6 UPMC, Case courrier 188,
4, Place Jussieu, 75252 PARIS Cedex 05, France.   
}
\title{L\'evy processes conditioned on having a large height process\\\today}
\keywords{L\'evy process, height process, Doob harmonic transform, splitting tree, spine decomposition, size-biased distribution, queueing theory}
\subjclass[2000]{Primary: 60G51, 60J80; Secondary: 60J85, 60G44, 60K25, 60G07, 60G57 }

\begin{document}\maketitle

\begin{abstract}
In the present work, we consider spectrally positive L\'evy processes $(X_t,t\geq0)$ not drifting to $+\infty$ and
we are interested in conditioning these processes to reach arbitrarily large heights (in the sense of the height process associated with $X$) before hitting $0$.

This way we obtain a new conditioning of L\'evy processes to stay positive.
 The (honest) law $\pfl$ of this conditioned process is defined as a Doob $h$-transform via a martingale. For L\'evy processes with infinite variation paths, this martingale is $\left(\int\tilde\rt(\mathrm{d}z)e^{\alpha z}+I_t\right)\2{t\leq T_0}$ for some $\alpha$ and where $(I_t,t\geq0)$ is the past infimum process of $X$, where $(\tilde\rt,t\geq0)$ is the so-called \emph{exploration process} defined in \cite{Duquesne2002} and where $T_0$ is the hitting time of 0 for $X$. Under $\pfl$, we also obtain a path decomposition of $X$ at its minimum, which enables us to prove the convergence of $\pfl$ as $x\to0$.

When the process $X$ is a compensated compound Poisson process, the previous martingale is defined through the jumps of the future infimum process of $X$. The computations are easier in this case because $X$ can be viewed as the contour process of a (sub)critical \emph{splitting tree}. We also can give an alternative characterization of our conditioned process in the vein of spine decompositions.
\end{abstract}

\section{Introduction}
In this paper, we consider L\'evy processes
$(X_t,t\geq0)$\index{Q@$X_t$} with no negative jumps (or spectrally
positive), not drifting to $+\infty$ and conditioned to reach
arbitrarily large heights (in the sense of the height process $H$
defined in \cite{Duquesne2002}) before hitting 0. Let $\px$ be the
law of $X$ conditional on $X_0=x$ and $(\mathcal F_t,t\geq0)$ be its
natural filtration.

Many papers deal with conditioning L\'evy processes in the
literature.
In seminal works by L. Chaumont \cite{Chaumont1994,Chaumont1996} and
then in \cite{Chaumont2005}, for general L\'evy processes, L.
Chaumont and R. Doney construct a family of measures $\p_x^\uparrow,x>0$ of
L\'evy processes starting from $x$ and conditioned to stay positive
defined via a $h$-transform and it can be obtained as the limit
$$\p_x^\uparrow(\Theta,t<\zeta)=\lim_{\eps\rightarrow0}\px\left(\Theta,t<\mathbf e/\eps|X_s>0,0\leq s\leq \mathbf{e}/\eps\right)$$
for $t\geq0$, $\Theta\in\mathcal F_t$ and for $\mathbf e$ an
exponential r.v. with parameter 1 independent from the process $X$
and where $\zeta$ is the killing time of $X$. In the spectrally
positive case, when $\Esp[X_1]<0$, $\p_x^\uparrow$ is a sub-probability
while, if $\Esp[X_1]=0$, it is a probability.
In \cite{Hirano2001}, K. Hirano considers L\'evy processes drifting
to $-\infty$ conditioned to stay positive. More precisely, under
exponential moment assumption, he is interested in two types of
conditioning events: either the process $X$ is conditioned to reach
$(-\infty,0]$ after time $s$ or to reach level $s>0$ before
$(-\infty,0]$. Then, at the limit $s\rightarrow\infty$, in both
cases, he defines two different conditioned L\'evy processes which
can be described via $h$-transforms.
In \cite[ch. VII]{Levy_processes}, J. Bertoin considers spectrally
negative L\'evy processes, \emph{i.e.} with no positive jumps, and
also constructs a family of conditioned processes to stay positive
via the scale function associated with $X$.

Here, we restrict ourselves to study spectrally positive L\'evy
processes and consider a new way to obtain a L\'evy process
conditioned to stay positive without additional assumptions and
contrary to \cite{Chaumont2005}, the law of the conditioned process
is honest. The process $X$ is  conditioned to reach arbitrarily
large heights before $T_0:=\inf\{t\geq0;X_t=0\}$. The term
\emph{height} should not be confused with the \emph{level} used in
the previously mentioned conditioning of Hirano. It has to be
understood in the sense of the height process $H$ associated with
$X$ and defined below.
 More precisely, for
$t\geq0,\Theta\in\mathcal F_t$, we are interested in the limit
\begin{equation}
\label{limiteconditionnement}
\lim_{a\rightarrow\infty}\p_x\left(\Theta,t<T_0\left|\sup_{0\leq
s\leq T_0}H_s\geq a\right.\right).\end{equation}
%

In the following, we will consider three different cases for the
L\'evy process $X$: a L\'evy process with finite variation and
infinite L\'evy measure, a L\'evy process with finite variation and
finite L\'evy measure and finally a L\'evy process with infinite
variation.

In the first case, as it is stated in Theorem \ref{conditionnment
hauteur}, the conditioning in (\ref{limiteconditionnement}) is
trivial because $\p_x(\sup_{0\leq t\leq T_0}H_t\geq a)=1$ for all
positive $a$.

In the second case, $X$ is simply a compensated compound Poisson
process whose Laplace exponent can be written as
$$\psi(\lambda)=\lambda-\int_{(0,\infty)}(1-e^{-\lambda
r})\Lambda(\dif r)$$ where $\Lambda$ is a finite measure on
$(0,\infty)$ such that $m:=\int_{(0,\infty)}r\Lambda(\dif r)\leq1$
(without loss of generality, we suppose that the drift is $-1$).
Thus, $X$ is either recurrent or drifts to $-\infty$ and its hitting
time $T_0$ of $0$ is finite a.s.
In this finite variation case, the height $H_t$ at time $t$ is the (finite) number of records of
the future infimum, that is, the number of times $s$ such that
$$X_{s-}<\inf_{[s,t]}X.$$
The process $X$ is then conditioned to reach height $a$ before
$T_0$. In the limit $a\rightarrow\infty$ in (\ref{limiteconditionnement}), we get a new probability $\pfl$, the law of the conditioned process. It is defined as a $h$-transform, via a martingale which depends on the
jumps of the future infimum. In the particular case $m=1$, this martingale is $X_{\cdot\wedge T_0}$ and
we recover the same $h$-transform as the one obtained in \cite{Chaumont2005}.
The key result used in our proof is due to A. Lambert
\cite{Amaury_contour_splitting_trees}. Indeed, the process $X$ can
be seen as a contour process of a splitting tree \cite{Geiger1997}.
These random trees are genealogical trees where each individual
lives independently of other individuals, gives birth at rate
$\Lambda(\R^+)$ to individuals whose life-lengths are distributed as
$\Lambda(\cdot)/\Lambda(\R^+)$. Then, to consider $X$ conditioned to
reach height $n$ before $T_0$ is equivalent to look at a splitting
tree conditional on having alive descendance at generation $n$.

Notice that we only consider the case when the drift $\alpha$ of $X$ equals 1. However, the case $\alpha\neq1$ can be treated in the same way because $X$ is still the contour process of a splitting tree but visited at speed $\alpha$.

We also obtain a more precise result about conditional subcritical and critical splitting trees. For $n\in\Nat$, set $\mathbf{P}^n$ the law of a splitting tree conditional on $\{\mathcal Z_n\neq0\}$ where $\mathcal Z_n$ denotes the number of extant individuals in the splitting tree belonging to generation $n$. In fact, $(\mathcal Z_n,n\geq0)$ is a
Galton-Watson process.
We are interested in the law of the tree under $\mathbf P^n$ as
$n\rightarrow\infty$. We obtain that under a $x\log x$-condition on
the measure $\Lambda$, the limiting tree has a unique infinite spine
where individuals have the size-biased lifelength distribution
$m^{-1}z\Lambda(\mathrm{d}z)$ and typical finite subtrees are
grafted on this spine.

The spine decomposition with a size-biased spine that we obtain is similar to the construction of size-biased splitting trees marked with a uniformly chosen point in \cite{Geiger1996} where all individuals on the line of descent between root and this marked individual have size-biased lifelengths.
It is also analogous to the construction of size-biased Galton-Watson trees in Lyons et al. \cite{Lyons1995}. These trees arise by conditioning subcritical or critical GW-trees on non-extinction. See also \cite{Aldous1991,Geiger1999,Lambert2007}.
In \cite{Duquesne2009}, T. Duquesne studied the so-called sin-trees that were introduced by D. Aldous in \cite{Aldous1991}. These trees are infinite trees with a unique infinite line of descent.
He also considers the analogous problem for continuous trees and continuous state branching processes as made by other authors in \cite{Lambert2002,Lambert2007,Li2000}.\\

We finally consider the case where $X$ has paths with infinite variation. Its associated Laplace exponent is specified by the L\'evy-Khintchine formula
$$\psi(\lambda)=\alpha\lambda+\beta\lambda^2+\intpo\Lambda(\dif r)(e^{-\lambda r}-1+\lambda r)$$
where $\alpha\geq0$, $\int_{(0,\infty)}\Lambda(\dif r)(r\wedge
r^2)<\infty$ and either $\beta>0$ or $\int_{(0,1)}\Lambda(\dif
r)r=\infty$.
In order to compute the limit (\ref{limiteconditionnement}) in
that case, we use the height process $(H_t,t\geq0)$ defined in
\cite{Duquesne2002,LeGall1998} which is the analogue of the
discrete-space height process in the finite variation case. We set $S_t:=\sup_{[0,t]}X$. Then,
since 0 is regular for itself for $S-X$, $H$ is defined through local
time. Indeed, for $t\geq0$, $H_t$ is the value at time $t$ of the
local time at level 0 of $S^{(t)}-X^{(t)}$ where $X^{(t)}$ is the
time-reversed process of $X$ at $t$
$$X_s^{(t)}:=X_{t-}-X_{(t-s)-}\quad s\in[0,t]$$
(with the convention $X_{0-}=X_0$)
and $S_s^{(t)}:=\sup_{0\leq r\leq s}X_r^{(t)}$ is its past supremum.

Under the additional hypothesis
\begin{equation}\label{absorptionCB}\int_{[1,+\infty)}\frac{\dif \lambda}{\psi(\lambda)}<\infty,
\end{equation}
which implies that $X$ has paths with infinite variation and that
the height process $H$ is locally bounded,
 we obtain a similar result to the finite variation case: the limit in (\ref{limiteconditionnement}) allows us to define a family of (honest) probabilities $(\pfl,x>0)$ of conditioned  L\'evy processes via a $h$-transform and the martingale
$$\int_0^{H_t}\rt(\mathrm{d}z)e^{\alpha z}\2{t\leq T_0}$$
where $\rt(\cdot)$ is a random positive measure on $\R^+$ which is a
slight modification of the \emph{exploration process} defined in
\cite{Duquesne2002,LeGall1998} and $\alpha=\psi'(0)\geq0$ (since $X$
does not drift to $+\infty$).

Again, in the recurrent case (i.e. if
$\alpha=0$), we observe that the previous quantity equals $X_{t\wedge
T_0}$ and we recover the $h$-transform $h(x)=x$ of \cite{Chaumont2005} in the spectrally positive case.
Indeed, for general L\'evy processes, the authors consider the law of the L\'evy
process conditioned to stay positive which is defined via the $h$-transform
$$h(x)=\Esp\left[\int_{[0,\infty)}\2{I_t\geq-x}\dif L_t\right]$$
where $I$ is the past infimum process and $L$ is a local time at 0 for $X-I$. In the particular spectrally positive case, $L=-I$ and $h$ is the identity. Then, in the recurrent case, our conditioned process $(X,\pfl)$ is the same as the process $(X,\p_x^\uparrow)$ defined in \cite{Chaumont1994,Chaumont1996,Chaumont2005}.

However, when $X$ drifts to $-\infty$, \emph{i.e.}, when $\alpha>0$, the probability measure $\pfl$ is different from those defined in the previously mentioned papers.

Under $\pfl$, the height process $(H_t,t\geq0)$ can be compared to the left height process $\overleftarrow{H}$ studied in \cite{Duquesne2009}. In that paper, T. Duquesne gives a genealogical interpretation of a continuous-state branching process with immigration by defining two continuous contour processes $\overleftarrow{H}$ and $\overrightarrow{H}$ that code the left and right parts of the infinite line of descent.
We construct two similar processes for conditioned splitting trees in Section \ref{arbre conditionné}.\\

We also obtain a path decomposition of $X$ at its minimum: under $\pfl$, the pre-minimum and post-minimum are independent and the law of the latter is $\p^\star$ which is, roughly speaking, the excursion measure of $X-I$ conditioned to reach "infinite height". Since under $\p^\star$, $X$ starts at 0, this probability can be viewed as the law of the L\'evy process conditioned to reach high heights and starting from 0.
For similar results, see \cite{Chaumont1994,Chaumont1996,Chaumont2005} and references therein. As in \cite{Chaumont2005}, the decomposition of $X$ under $\pfl$ implies the convergence of $\pfl$ as $x\to\infty$ to $\p^\star$. Recently, in \cite{Nguyen-Ngoc2010}, L. Nguyen-Ngoc studied the penalization of some spectrally negative L\'evy processes and get similar results as ours about path decomposition.
\\

The paper is organized as follows. In Section \ref{finitevariation},
we treat the finite variation case and investigate the limiting
process after stating some properties about splitting trees. Section
\ref{arbre conditionné} is devoted to studying the conditioned
splitting tree and Section \ref{variation_infinie} to considering
L\'evy processes with infinite variation and to giving properties of the conditioned process.

\section{Finite variation case}\label{finitevariation}
\subsection{Definitions and statement of result}
Let $\Lambda$ be a positive measure on $(0,\infty)$ such that $\Lambda\neq0$ and
$$\int_{(0,\infty)}(x\wedge1)\Lambda(\dif x)<\infty$$
and let $(X_t,t\geq0)$ be a spectrally positive L\'evy process with
L\'evy measure $\Lambda$ and such that $$\Esp_0\left[e^{-\lambda
X_t}\right]=e^{t\psi(\lambda)},\quad \lambda>0$$ where $\px$ is the
law of $X$ conditioned to $X_0=x$ and
$$\psi(\lambda)=\lambda-\int_{(0,\infty)}(1-e^{-\lambda r})\Lambda(\dif r).$$
We denote by $\mathcal{F}_t:=\sigma(X_s,0\leq s\leq t)$ the natural filtration of $X$.
We will suppose that $m:=\int_{(0,\infty)}r\Lambda(\dif r)\leq1$
that is, $X$ is recurrent ($m=1$) or drifts to $-\infty$ ($m<1$).
Then the hitting time $T_0:=\inf\{t\geq0;X_t=0\}$ is finite almost
surely. Observe that since $X$ is spectrally positive, the first
hitting time of $(-\infty,0]$ is $T_0$.

\begin{defi}\label{def_proc_hauteur}
The height process $H$ associated with $X$ is defined by
$$H_t:=\#\left\{0\leq s\leq t;X_{s-}<\inf_{s\leq r\leq t}X_r\right\}.$$

We set $$\{s_t^1<\dots< s_t^{H_t}\}:=\{0\leq s\leq t<T_0;X_{s-}<\inf_{s\leq r\leq t}X_r\},$$
$$I^t_s:=\inf_{s\leq r\leq t}X_r,  \quad 0\leq s\leq t$$
and we denote the jumps of $(I^t_s,s\leq t)$ by
$\rho_i^t:=\inf_{s^i_t\leq r\leq t}X_r-X_{s_t^i-}$ for $1\leq i\leq
H_t$ and $\rho_0^t=\inf_{0\leq r\leq t }X_r$ (see Figure
\ref{contour}).
\end{defi}
The assumption $\int_{(0,\infty)}(x\wedge1)\Lambda(\dif x)<\infty$
implies that the paths of $X$ have finite variation and then for all
positive $t$, $H_t$ is finite a.s. (Lemma 3.1 in \cite{LeGall1998}).

\begin{rem} The process $X$ can be seen as a LIFO (last in-first out) queue \cite{LeGall1998,Robert2003}.
Indeed, a jump of $X$ at time $t$ corresponds to the entrance in the system of a new customer who requires a service $\Delta X_t:=X_t-X_{t-}$.
This customer is served in priority at rate 1 until a new customer enters the system.
Then, the $\rho_i^t$'s are the remaining service times of the $H_t$ customers present in the system at time $t$.
\end{rem}

The sequence $(\rho_i^t,i\leq H_t)$ can be seen as a random positive measure
on non-negative integers which puts weight $\rho^t_i$ on $\{i\}$.
Its total mass is $X_t$ and its support is $\{0,...,H_t\}$.
We denote by $S$ the set of measures
on $\Nat$ with compact support. For $\nu$ in $S$, set $\nu_i:=\nu(\{i\}),i\geq0$ and $$H(\nu):=\max\{i\geq0;\nu_i\neq0\}.$$ Then, according to
\cite[p.200]{Robert2003}, the process $(\rho^{t\wedge T_0},t\geq0)$ is a $S$-valued
Markov process. Its infinitesimal
generator $\mathcal A$ is defined by
{\setlength\arraycolsep{2pt}
\begin{eqnarray}
\mathcal A(f)(\nu)&=&\left[\int_{(0,\infty)}\!\!\left(f(\nu_0,\nu_1,\dots,\nu_{H(\nu)},r,0,\dots)
-f(\nu_0,\nu_1,\dots,\nu_{H(\nu)},0,\dots)\right)\Lambda(\dif r)\right.\nonumber\\
&&\qquad\qquad\qquad\left.-\frac{\partial f}{\partial
x_{H(\nu)}}(\nu_0,\dots,\nu_{H(\nu)},0,\dots)\right]\1_{\{\nu\neq0\}}.\label{generateur_rho_t}
\end{eqnarray}}

\begin{figure}[!ht]
\begin{center}
\includegraphics{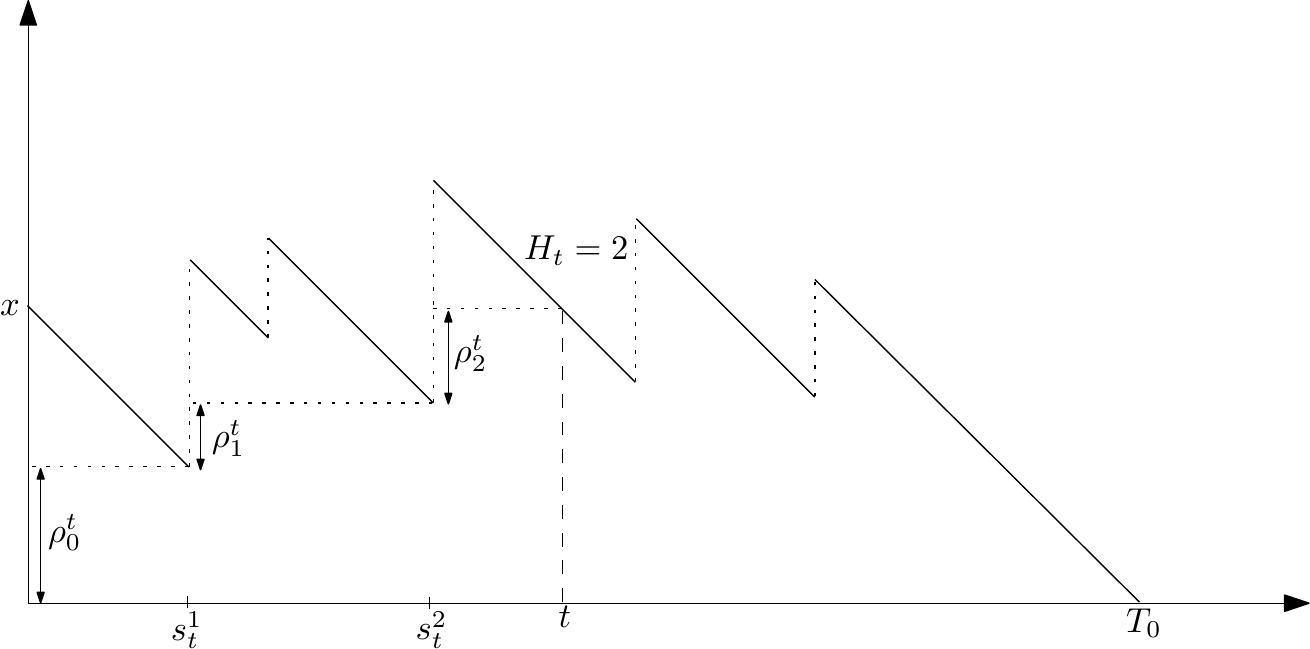}
\caption{A trajectory of the process $X$ started at $x$ and killed when it reaches 0 and the remaining service times at time $t$ $\rho_i^t$ for $i\in\{0,\dots,H_t\}$.}
\label{contour}
\end{center}
\end{figure}

In the following proposition, we condition  the L\'evy process $X$
to reach arbitrarily large heights before $T_0$.

\begin{thm}\label{conditionnment hauteur}
\begin{enumerate}[(i)]
\item
Assume that $b:=\Lambda(\R^+)$ is finite and
$$\left(m=1 \textrm{ and }\int_{(0,\infty)} z^2\Lambda(\mathrm{d}z)<\infty\right) \textrm{ or }\left( m<1 \textrm{ and }\int_{[1,\infty)} z\log (z)\Lambda(\mathrm{d}z)<\infty\right).$$
 Then, for $t\geq0$ and $\Theta\in\mathcal F_t$,
$$\lim_{a\rightarrow\infty}\px\left(\Theta,t<T_0\left|\sup_{s\leq T_0}H_s\geq a\right.\right)=\frac{1}{x}\ex\left[M_{t\wedge T_0}\1_\Theta\right]$$
where $$M_t=\sum_{i=0}^{H_t}\rho_i^tm^{-i}.$$
In particular, if $m=1$, then $M_t=X_t$.
Moreover, the process $\left(M_{t\wedge T_0},t\geq0\right)$ is a
$(\mathcal F_t)$-martingale under $\px$.

\item If $b=\infty$, the conditioning with respect to $\{\sup_{s\in(0,T_0)}H_s\geq a\}$ is trivial
in the sense that for all $a\geq 0$,
$\px(\sup_{s\in(0,T_0)}H_s\geq a)=1.$
\end{enumerate}
\end{thm}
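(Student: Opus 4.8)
The plan is to exploit the splitting‑tree representation of Lambert \cite{Amaury_contour_splitting_trees}. Under $\px$ the process $X$, killed at $T_0$, is the jumping chronological contour process of a splitting tree $\TT$ whose root has lifespan $x$, in which each individual independently gives birth at rate $b=\Lambda(\R^+)$ to individuals with lifespan law $\Lambda(\cdot)/b$, and $H_t$ is the generation of the individual explored at time $t$; since the contour visits every individual, $\sup_{s\le T_0}H_s$ is the maximal generation of $\TT$. Hence $\{\sup_{s\le T_0}H_s\ge a\}=\{\mathcal Z_a\ge1\}$, and the offspring law of the (from generation one on) Galton--Watson process $(\mathcal Z_n)$ is the mixed Poisson law with parameter $bL$, $L\sim\Lambda(\cdot)/b$, which has mean $m$ and is therefore (sub)critical; so $(\mathcal Z_n)$ dies out and $\px(\sup_{s\le T_0}H_s\ge a)\downarrow0$.

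First I would reduce everything to one numerical sequence. Let $G(y,n)$ be the probability that a splitting tree with root lifespan $y$ reaches generation $n$ and put $\gamma_n:=\int_{(0,\infty)}G(\ell,n)\Lambda(\dif\ell)/b$. A one‑generation decomposition (number of children of the root is Poisson with mean $by$, subtrees independent) gives $G(y,n)=1-e^{-by\gamma_{n-1}}$ for $n\ge1$, and then, using $\int_{(0,\infty)}e^{-\lambda\ell}\Lambda(\dif\ell)=b-\lambda+\psi(\lambda)$, the recursion $\gamma_0=1,\ \gamma_n=\gamma_{n-1}-b^{-1}\psi(b\gamma_{n-1})$; in particular $\px(\sup_{s\le T_0}H_s\ge a)=1-e^{-bx\gamma_{a-1}}$. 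Next, applying the Markov property at time $t$: on $\{t<T_0\}$ the customers still in the queue form a spine of individuals at generations $0,\dots,H_t$ with remaining lifespans $\rho_0^t,\dots,\rho_{H_t}^t$, carrying conditionally independent fresh splitting trees, so for every $a$ larger than $\sup_{s\le t}H_s$ (which is a.s.\ finite, there being finitely many jumps on $[0,t]$),
$$\px\Big(\sup_{s\le T_0}H_s\ge a\,\Big|\,\mathcal F_t\Big)=1-\prod_{i=0}^{H_t}e^{-b\rho_i^t\gamma_{a-i-1}}=1-\exp\Big(-b\sum_{i=0}^{H_t}\rho_i^t\gamma_{a-i-1}\Big).$$
Dividing the conditional probability in \eqref{limiteconditionnement} by $\px(\sup_{s\le T_0}H_s\ge a)=1-e^{-bx\gamma_{a-1}}$ turns the statement into the study, as $a\to\infty$, of $x^{-1}\ex[\,\1_{\Theta}\1_{t<T_0}\,R_a\,]$ with $R_a:=x\big(1-\exp(-b\sum_{i=0}^{H_t}\rho_i^t\gamma_{a-i-1})\big)/\big(1-e^{-bx\gamma_{a-1}}\big)$.

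The crux, and the place where the two moment hypotheses are consumed, is the asymptotics of $\gamma_n$. Since $\lambda\mapsto\psi(\lambda)/\lambda$ is nondecreasing with limit $1-m$ at $0^+$, one gets $\gamma_n\le m^n$; combined with classical one‑type Galton--Watson asymptotics for the process with offspring p.g.f.\ $F(s)=\int_{(0,\infty)}e^{-b\ell(1-s)}\Lambda(\dif\ell)/b$ — Heathcote--Seneta's theorem when $m<1$, valid under $\ex[\mathcal Z_1\log^+\mathcal Z_1]<\infty$, i.e.\ $\int_{[1,\infty)}z\log z\,\Lambda(\dif z)<\infty$, giving $\gamma_n\sim Cm^n$ with $C>0$; Kolmogorov's theorem when $m=1$, valid since the offspring variance is $b\int_{(0,\infty)}z^2\Lambda(\dif z)<\infty$, giving $\gamma_n\sim c/n$ with $c>0$ — one obtains $\gamma_{a-i-1}/\gamma_{a-1}\to m^{-i}$ for each fixed $i$, together with the uniform bounds $\gamma_{a-i-1}/\gamma_{a-1}\le C_1 m^{-i}$ (case $m<1$, from $\inf_n\gamma_nm^{-n}>0$) and $\gamma_{a-i-1}/\gamma_{a-1}\le C_1(i+1)$ (case $m=1$, from $0<\inf_n n\gamma_n\le\sup_n n\gamma_n<\infty$ and $(a-1)/(a-i-1)\le i+1$). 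Because $H_t<\infty$ a.s., $R_a\to M_{t\wedge T_0}$ a.s.\ on $\{t<T_0\}$ (using $1-e^{-y}\sim y$) while $R_a=0$ eventually on $\{t\ge T_0\}$, where also $M_{t\wedge T_0}=M_{T_0}=0$; and the uniform bounds dominate $\1_{t<T_0}R_a$ by $C_1M_{t\wedge T_0}$ when $m<1$ and by $C_1\sum_{i=0}^{H_{t\wedge T_0}}(i+1)\rho_i^{t\wedge T_0}$ when $m=1$. Both dominating variables are integrable: the generator \eqref{generateur_rho_t} annihilates $\nu\mapsto\sum_i\nu_i m^{-i}$ (a direct computation, using $\int r\Lambda(\dif r)=m$), so $(M_{t\wedge T_0})$ is a nonnegative local martingale, hence a supermartingale with $\ex[M_{t\wedge T_0}]\le x$; and when $m=1$, $\mathcal A$ sends $\nu\mapsto\sum_i(i+1)\nu_i$ to $\1_{\{\nu\ne0\}}$, whence $\ex[\sum_i(i+1)\rho_i^{t\wedge T_0}]\le x+t$ by localisation and Fatou. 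Dominated convergence then gives the announced limit $\tfrac1x\ex[M_{t\wedge T_0}\1_\Theta]$. Finally, each $N^a_t:=\px(\sup_{s\le T_0}H_s\ge a\mid\mathcal F_t)/\px(\sup_{s\le T_0}H_s\ge a)$ is a genuine $(\mathcal F_t)$‑martingale, the above convergence is in fact $L^1$, namely $xN^a_t\to M_{t\wedge T_0}$, so passing to the limit in $\ex[xN^a_t\mid\mathcal F_s]=xN^a_s$ shows that $(M_{t\wedge T_0})$ is a true martingale with $\ex[M_{t\wedge T_0}]=M_0=x$; for $m=1$, where $M_{t\wedge T_0}=X_{t\wedge T_0}$, this is the classical martingale property. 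The main obstacle is exactly this step: identifying the precise Galton--Watson estimates (and the integrability assumptions they require) and producing the uniform domination, for which the (super)martingale facts issued from \eqref{generateur_rho_t} are the right tool.

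For part (ii), when $b=\Lambda(\R^+)=\infty$ the splitting‑tree description is unchanged except that, $\Lambda$ being infinite, every individual with positive lifespan has a.s.\ infinitely many children, each of which has a.s.\ positive lifespan. Starting from the root one can therefore extract an infinite line of descent, so $\TT$ has individuals in every generation, i.e.\ $\sup_{s\le T_0}H_s=\infty$ $\px$‑a.s., which gives $\px(\sup_{s\in(0,T_0)}H_s\ge a)=1$ for all $a\ge0$ and makes the conditioning trivial.
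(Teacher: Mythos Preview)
Your proof is correct and, in the key integrability step, takes a genuinely different and shorter route than the paper. The paper bounds $\ex\big[\sum_{i=1}^{H_t}\rho_i^t m^{-i}\big]$ by a rather involved time-reversal argument (rewriting everything in terms of the ladder process of $X^{(t)}$, using the explicit law of the first ladder overshoot, and summing over the record times). You bypass this entirely by observing that the generator $\mathcal A$ annihilates $g(\nu)=\sum_i\nu_i m^{-i}$, so $(M_{t\wedge T_0})$ is a nonnegative local martingale, hence a supermartingale with $\ex[M_{t\wedge T_0}]\le x$; in the critical case you similarly compute $\mathcal A\big(\nu\mapsto\sum_i(i+1)\nu_i\big)=\1_{\{\nu\ne0\}}$ to get the dominating bound $x+t$. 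This is much more economical and yields exactly what is needed for the dominated convergence. Your way of upgrading to a \emph{true} martingale --- via $L^1$-convergence of the ratio martingales $xN^a_t$ --- is also cleaner than the paper's bare appeal to $\mathcal A g=0$, which strictly speaking only gives a local martingale. For part~(ii) the paper invokes the Jirina process $(Z_n)$ and computes $\px(Z_a\ne0)=1-\exp(-x\lim_\lambda F_a(\lambda))=1$; your direct observation that when $b=\infty$ every individual has a.s.\ infinitely many children (each with positive lifespan) and hence an infinite line of descent exists is an equally valid and more elementary argument.

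One small point you should make explicit. Your formula for $\px(\sup_{s\le T_0}H_s\ge a\mid\mathcal F_t)$ and your bounds on $R_a$ are stated only for $a>\sup_{s\le t}H_s$; on the complementary event $R_a=x/\px(\sup H\ge a)$, which is not dominated by your integrable majorants. The paper handles exactly this by splitting off $\{\tau_a\le t\}\subset\{N_t\ge a\}$ and using $\px(N_t\ge a)\le (bt)^a/a!=o(\gamma_{a-1})$, so that this piece contributes nothing in the limit. You should insert the same one-line estimate before invoking dominated convergence; otherwise the DCT step is not fully justified as written.
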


Observe that if $b<\infty$, the process $X$
is simply a compensated compound Poisson process whose jumps occur
at rate $b$ an are distributed as $\Lambda(\cdot)/b$.

The proof of this result will be made in Section \ref{preuve}. It uses the fact that
$X$ can be viewed as the contour process of a splitting tree visited at speed 1. The integrability hypotheses about $\Lambda$ are made in order to use classical properties of (sub)critical BGW processes that appear in splitting trees.

Notice that the case where $X$ is a L\'evy process with Laplace
exponent $\psi(\lambda)=\alpha \lambda -\intpo(1-e^{-\lambda
r})\Lambda(\dif r)$ and $b<\infty$ can be treated in a same way if
$m\leq \alpha$. Indeed, in that case, $X$ is still the contour
process of a splitting tree
 but it is visited at speed $\alpha$.
Theorem \ref{conditionnment hauteur} is still valid but the
martingale becomes
 $$M_{t\wedge T_0}=\sum_{i=0}^{H_t}\rho_i^t\left(\frac{\alpha}{m}\right)^i\2{t\leq T_0}.$$
Before the proof, we define the splitting trees and recall some of their properties.
\subsection{Splitting Trees}
Most of what follows is taken from \cite{Amaury_contour_splitting_trees}.
We denote the set of finite sequences of positive integers by
$$\U=\bigcup_{n=0}^\infty (\Nat^*)^n$$ where $(\Nat^*)^0=\{\emptyset\}$.
\begin{defi}
  A discrete tree $\T$ is a subset of $\U$ such that
\begin{enumerate}[(i)]
\item $\emptyset\in\T$ (root of the tree)
\item if $uj\in\T$ for $j\in\Nat$, then $u\in\T$ (if an individual is in the tree, so is its mother)
\item $\forall u\in\T$, $\exists K_u\in\Nat\cup\{\infty\}$, $\forall j\in\{1,\dots,K_u\}, uj\in\T$ ($K_u$ is the offspring number of $u$).
\end{enumerate}
\end{defi}
If $u=(u_1,\dots,u_n)\in\T$, then its generation is $|u|:=n$, its ancestor at generation $i$ is denoted by $u|i$ and if $v=(v_1,\dots,v_m)$ we denote by $uv$ the concatenation of $u$ and $v$
$$uv:=(u_1,\dots,u_n,v_1,\dots,v_m).$$

In chronological trees, each individual has a birth level $\alpha$ and a death level $\omega$.
Let $p_1$, $p_2$ be the two canonical projections of $\uu:=\U\times[0,+\infty)$ on $\U$ and $[0,\infty)$.
We will denote by $\T$ the projection of $\TT\subset\uu$ on $\U$
  $$\T:=p_1(\TT)=\{u:\exists \sigma\geq0, (u,\sigma)\in\TT\}.$$
\begin{defi}
A subset $\TT$ of $\uu$ is a chronological tree if
\begin{enumerate}[(i)]
\item $\rho:=(\emptyset,0)\in\TT$ (the root)
\item $\T$ is a discrete tree
\item $\forall u\in \T\backslash\{\emptyset\}$, $\exists 0\leq\alpha(u)<\omega(u)\leq\infty$ such that $(u,\sigma)\in\TT$ if and only if $\alpha(u)< \sigma\leq\omega(u)$.
$\alpha(u)$ (resp. $\omega(u)$) is the birth (resp. death) level of $u$
\item if $ui\in\T$, then $\alpha(u)<\alpha(ui)<\omega(u)$ (an individual has only children during its life)
\item if $ui,uj\in\T$ then $i\neq j$ implies $\alpha(u_i)\neq\alpha(u_j)$ (no simultaneous births).
\end{enumerate}
\end{defi}
For $u\in\T$, we denote by $\zeta(u):=\omega(u)-\alpha(u)$ its \emph{lifetime duration}.
For two chronological trees $\TT,\TT'$ and $x=(u,\sigma)\in\TT$ such that $\sigma\neq\omega(u)$
(not a death point) and $\sigma\neq\alpha(ui)$ for any $i$ (not a birth point),
we denote by $G(\TT',\TT,x)$ the \emph{graft} of $\TT'$ on $\TT$ at $x$
\begin{equation*}\label{graft}G(\TT',\TT,x):=\TT\cup\{(uv,\sigma+\tau):(v,\tau)\in\TT'\}.
\end{equation*}

Recall that $\Lambda$ is a $\sigma$-finite measure on $(0,\infty]$
such that $\int_{(0,\infty)}(r\wedge1)\Lambda(\dif r)<\infty$. A
\emph{splitting tree} \cite{Geiger1996,Geiger1997} is a random
chronological tree defined as follows.
For $x\geq0$, we denote by $\mathbf P_x$ the law of a splitting tree starting from an ancestor individual $\emptyset$ with lifetime $(0,x]$.
We define recursively the family of probabilities $\mathbf P=(\mathbf P_x)_{x\geq0}$.
Let $(\alpha_i,\zeta_i)_{i\geq1}$ be the atoms of a Poisson measure on $(0,x)\times(0,+\infty]$ with intensity measure
$\mathrm{Leb}\otimes\Lambda$ where $\mathrm{Leb}$ is the Lebesgue measure.
Then $\mathbf P$ is the unique family of probabilities on chronological trees $\TT$ such that
$$\TT=\bigcup_{n\geq1}G(\TT_n,\emptyset\times(0,x),\alpha_n)$$
where, conditional on the Poisson measure, the $(\TT_n)$ are independent splitting trees and for $n\geq1$, conditional on $\zeta_n=\zeta$, $\TT_n$ has law $\mathbf P_\zeta$.
\\
The measure $\Lambda$ is called the \emph{lifespan measure} of the splitting tree and when it has a finite mass $b$, there is an equivalent definition of a splitting tree:
\begin{itemize}
\item  individuals behave independently from one another and have i.i.d. lifetime durations distributed as $\frac{\Lambda(\cdot)}{b}$,
\item conditional on her birthdate $\alpha$ and her lifespan $\zeta$, each individual reproduces according to a Poisson point process on $(\alpha,\alpha+\zeta)$ with intensity $b$,
\item births arrive singly.
\end{itemize}
%
We now display a branching process embedded in a splitting tree and which will be useful in the following.
According to \cite{Amaury_contour_splitting_trees}, when $b<\infty$, if for $n\in\Nat$, $\mathcal Z_n$ is the number of alive individuals of generation $n$
    \begin{equation}\label{GaltonWatson}
        \mathcal Z_n:=\#\{v\in\T:|v|=n\},
        \end{equation}
 then under $\mathbf P$, $(\mathcal Z_n,n\geq0)$ is a Bienaym\'e-Galton-Watson (BGW) process starting at 1 and with offspring distribution defined by
\begin{equation}
\label{offspringdistrib}p_k:=\int_{(0,\infty)}\frac{
\Lambda(\mathrm{d}z)}{b} \frac{(bz)^k}{k!}e^{-bz},\ k\geq0.
\end{equation}
Notice that under $\mathbf P_x$, $(\mathcal Z_n,n\geq1)$ is still a
BGW process with the same offspring distribution but starting at $\mathcal Z_1$, distributed as a Poisson r.v. with parameter $bx$.

Because of the additional hypothesis $m<1$ (resp. $m=1$), the splitting trees that we consider are subcritical (resp. critical).
Then, in both cases they have finite lengths a.s. and we can consider their associated JCCP (for jumping chronological contour process)
$(Y_t,t\geq0)$ as it is done in \cite{Amaury_contour_splitting_trees}.

It is a c\`adl\`ag piecewise linear function with slope $-1$ which visits once each point of the tree.
The visit of the tree begins at the death level of the ancestor.
When the visit of an individual $v$ of the tree begins, the value of the process is her death level $\omega(v)$.
Then, it visits $v$ backwards in time.
If she has no child, her visit is interrupted after a time $\zeta(v)$; otherwise
the visit stops when the birth level of her youngest child (call it $w$) is reached.
Then, the contour process jumps
from $\alpha(w)$ to $\omega(w)$ and starts the visit of $w$ in the same way.
When the visits of $w$ and all her descendance will be completed, the visit of $v$ can continue (at this point,
the value of the JCCP is $\alpha(w)$) until another birth event occurs. When the visit of $v$ is finished,
the visit of her mother can resume (at level $\alpha(v)$).
This procedure then goes on recursively until level 0 is encountered ($0 = \alpha(\emptyset)$ = birth level of the root) and after that the value of the process is 0 (see Figure \ref{splitting_plus_contour}).
For a more formal definition of this process, read Section 3 in \cite{Amaury_contour_splitting_trees}.

\begin{figure}[!ht]
\begin{center}
\includegraphics{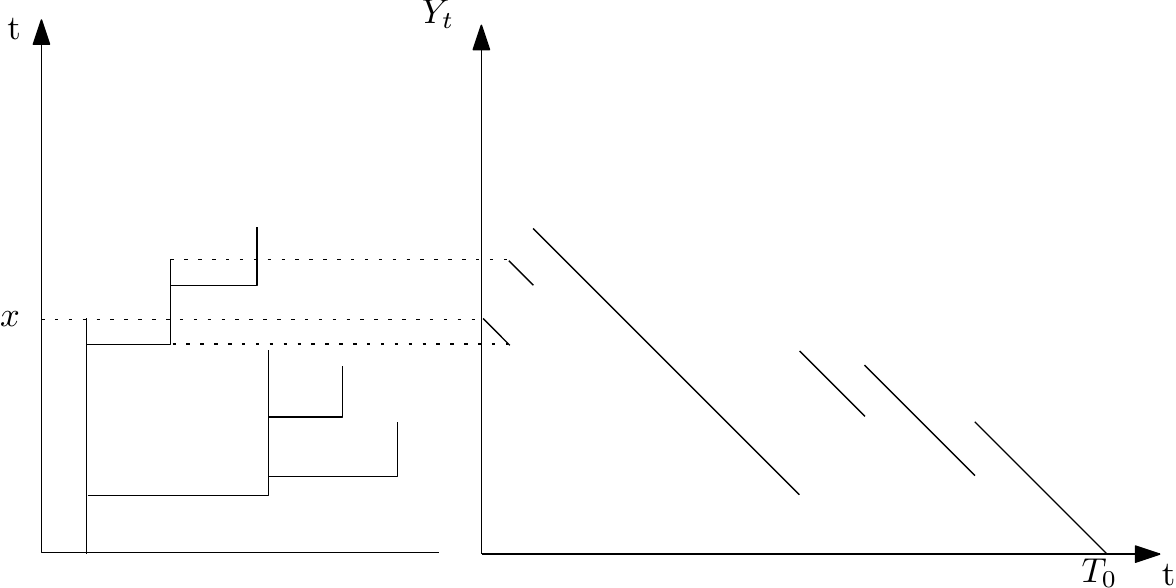}
\caption{On the left panel, a splitting tree whose ancestor has lifespan duration $x$ (vertical axis is time and horizontal axis shows filiation) and its associated Jumping Chronological Contour Process $(Y_t,t\geq0)$ on the right panel.}
\label{splitting_plus_contour}
\end{center}
\end{figure}

Moreover, the splitting tree can be fully recovered from its
JCCP and we will use this correspondence to prove Theorem
\ref{conditionnment hauteur}. It enables us to link the genealogical
height (or generation) in the chronological tree and the height process of the JCCP.
\begin{prop}[Lambert,\cite{Amaury_contour_splitting_trees}]\label{JCCPsplittingtree}
The process $(Y_t,t\geq0)$ under $\mathbf P_x$ has the law of the L\'evy
process $(X_{t},0\leq t\leq T_0)$ under $\px$.

Moreover, if, as in Definition \ref{def_proc_hauteur}, for $t\geq 0$, we
consider
$$h(t):=\#\{0\leq s\leq t;Y_{s-}<\inf_{s\leq r\leq t}Y_r\},$$
then $h(t)$ is exactly the genealogical height in $\T$ of the individual visited at time $t$ by the contour process.
\end{prop}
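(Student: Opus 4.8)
The plan is to characterise the law of the JCCP by a \emph{recursive self-similar decomposition} satisfied both by $(Y_t,t\ge0)$ under $\mathbf P_x$ and by the killed process $(X_{t\wedge T_0},t\ge0)$ under $\px$, to argue that this decomposition admits a unique solution, and then to read off the description of $h(t)$ directly from the dynamics of the contour. I would first reduce to the case $b:=\Lambda(\R^+)<\infty$, which is the one actually needed for Theorem \ref{conditionnment hauteur}; the general $\sigma$-finite lifespan measure is recovered afterwards by truncating $\Lambda$ to $\Lambda|_{(\eps,\infty)}$ and letting $\eps\downarrow0$. When $b<\infty$ the splitting tree is the one given by the ``independent individuals with i.i.d.\ lifetimes of law $\Lambda/b$ and reproduction at rate $b$'' construction, and, the embedded BGW process being (sub)critical, the tree is a.s.\ finite, so the JCCP has a.s.\ finitely many jumps.

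To set up the recursion, fix $x>0$. Under $\mathbf P_x$ the birth levels of the children of the ancestor $\emptyset$ (lifespan $(0,x]$) form a Poisson process of rate $b$ on $(0,x)$; let $\mathbf e$ be the distance from $x$ down to the highest of these levels, so $\mathbf e$ is exponential of parameter $b$ and $\{\mathbf e\ge x\}$ is the event that $\emptyset$ has no child. Then the contour starts at $\omega(\emptyset)=x$, descends at unit rate to level $x-\mathbf e$ (or to $0$, on $\{\mathbf e\ge x\}$, where it is absorbed), jumps up to $\omega(w)=(x-\mathbf e)+\zeta(w)$ where $w$ is the youngest child and $\zeta(w)\sim\Lambda/b$, then runs the JCCP of the subtree rooted at $w$ — by the recursive definition of splitting trees an independent copy of the JCCP under $\mathbf P_{\zeta(w)}$ lifted by $x-\mathbf e$ — until it returns to $x-\mathbf e$, and finally, by the branching property of splitting trees (the part of the tree still to be explored being $\emptyset$ restricted to $(0,x-\mathbf e]$ together with its children born below level $x-\mathbf e$ and their descendants), continues as an independent copy of the JCCP under $\mathbf P_{x-\mathbf e}$. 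The killed L\'evy process obeys the same recursion: being of finite variation with drift $-1$ and jumping at rate $b$ with jump law $\Lambda/b$, under $\px$ it descends at unit rate from $x$ with first jump at an independent exponential time $\mathbf e$ (and $T_0=x$ on $\{\mathbf e\ge x\}$), then jumps up by an independent $r\sim\Lambda/b$; since $X$ is spectrally positive it thereafter reaches $x-\mathbf e$ continuously from above, and the strong Markov property applied at the jump time and then at the first passage time at $x-\mathbf e$ identifies the stretch from the jump to that passage as an independent copy of $(X_{\cdot\wedge T_0})$ under $\p_r$ lifted by $x-\mathbf e$, and the remainder as an independent copy of $(X_{\cdot\wedge T_0})$ under $\p_{x-\mathbf e}$.

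Conditioning on the (a.s.\ finite) number $N$ of jumps and inducting on $N$ — the two pieces produced by the decomposition having strictly fewer jumps — one checks that, among laws of $[0,\infty)$-valued c\`adl\`ag paths that decrease at unit rate between their (positive) jumps and are absorbed at $0$, the recursion has a unique solution; hence $(Y_t,t\ge0)$ under $\mathbf P_x$ and $(X_{t\wedge T_0},t\ge0)$ under $\px$ have the same law, which is the first assertion. For the second, let $u$ be the individual visited at time $t$ and $\emptyset=u|0,u|1,\dots,u|n=u$, with $n=|u|$, its ancestral line. A time $s\le t$ can satisfy $Y_{s-}<\inf_{s\le r\le t}Y_r$ only if $Y$ jumps at $s$ (otherwise $Y_{s-}=Y_s\ge\inf_{[s,t]}Y$), i.e.\ only if $s$ is the birth time of some individual $v$, with $Y_{s-}=\alpha(v)$ and $Y_s=\omega(v)$; and since the contour stays $>\alpha(v)$ throughout the exploration of the subtree rooted at $v$ and drops back to $\alpha(v)$ only once that exploration is complete, the inequality $\alpha(v)<Y_r$ for all $r\in[s,t]$ holds precisely when $t$ lies strictly inside the time interval over which that subtree is explored, that is, precisely when $u$ is a descendant of $v$ or $u=v$. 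Hence the times $s$ counted by $h(t)$ are exactly the birth times of $u|1,\dots,u|n$ (the root contributes none, the exploration starting at $\omega(\emptyset)$ without a jump), so $h(t)=n=|u|$.

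The step I expect to be the main obstacle is making the recursive decomposition and its uniqueness fully rigorous, in particular formulating cleanly the branching property of splitting trees that identifies the ``tail'' of the contour, after a child's subtree has been explored, as an independent $\mathbf P_{x-\mathbf e}$-tree; granting this, the L\'evy-side description is routine (strong Markov plus spectral positivity) and the uniqueness is a short induction on the number of jumps. A secondary technical point is the passage from $b<\infty$ to a general $\sigma$-finite lifespan measure: one must check that both the contour process and the count $h(t)$ behave well under truncation of $\Lambda$, which is where the finite-variation hypothesis $\int_{(0,\infty)}(r\wedge1)\Lambda(\dif r)<\infty$ — ensuring in particular that $h(t)<\infty$ a.s.\ (Lemma 3.1 of \cite{LeGall1998}) — enters.
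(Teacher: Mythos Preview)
The paper does not give its own proof of this proposition: it is stated with attribution to Lambert \cite{Amaury_contour_splitting_trees} and used as a black box (see the sentence ``For a more formal definition of this process, read Section 3 in \cite{Amaury_contour_splitting_trees}'' just above the statement). So there is no in-paper argument to compare your proposal against; what you have written is a self-contained proof of a result the author is merely quoting.

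That said, your approach is sound and is in the spirit of how this identification is usually established. A couple of remarks. First, the ``induction on the number of jumps'' phrasing for uniqueness is a little awkward, because the recursion does not condition on the total number of jumps; a cleaner way to close the argument is to observe that your decomposition already shows that the JCCP is strong Markov with the same local dynamics (drift $-1$, jumps at rate $b$ with law $\Lambda/b$, absorption at $0$) as the killed compensated compound Poisson process, and two strong Markov processes with the same semigroup and same initial point have the same law. This is essentially what you are doing, just stated without the induction scaffolding. Second, the $\sigma$-finite extension you flag at the end is a genuine point if one wants the proposition in full generality, but note that in this paper the JCCP is only invoked in the case $b<\infty$ (Theorem \ref{conditionnment hauteur}(i)); the case $b=\infty$ in Theorem \ref{conditionnment hauteur}(ii) is handled via the Jirina process $(Z_n)$, not via the contour, so for the purposes of the present paper the truncation step is not actually needed. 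Your argument for $h(t)=|u|$ is correct as written.
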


\subsection{Proof of Theorem \ref{conditionnment hauteur}}\label{preuve}
Thanks to Proposition \ref{JCCPsplittingtree}, the process $X$ is
the JCCP of a splitting tree with lifespan measure $\Lambda$. For $a\geq1$, let $\tau_a:=\inf\{t\geq 0; H_t\geq a\}$.
Then, $\{\sup_{s\leq T_0}H_s\geq a\}=\{\tau_a<T_0\}$.
Furthermore, according to the second part of Proposition \ref{JCCPsplittingtree}, the events \{$X$ reaches height $a$
before $T_0$\} and \{the splitting tree is alive at generation $a$\} coincide.

We first prove the simpler point (ii) of Theorem \ref{conditionnment hauteur} where $b=\infty$.
According to \cite{Amaury_contour_splitting_trees}, if for $n\geq0$, $Z_n$ denotes the sum of lifespans
 of individuals of generation $n$ in the splitting tree
 $$Z_n:=\sum_{v\in\mathcal T,|v|=n}\zeta(v),$$
  then under $\px$, the process $(Z_n,n\geq 0)$ is a
 Jirina process starting at $x$ and with branching mechanism
 $$F(\lambda):=\int_{(0,\infty)}(1-e^{-\lambda r})\Lambda(\dif r)=\lambda-\psi(\lambda),$$
 that is, $Z:=(Z_n,n\geq0)$ is a time-homogeneous Markov chain with values in $\R^+$, satisfying the branching property with respect to initial condition, (i.e. if $Z^a$ and $Z^b$ are two independent copies of $Z$ respectively starting from $a$ and $b$, then $Z^a+Z^b$ has the same distribution as $Z$ starting from ${a+b}$)
 and such that
 $$\ex\left[e^{-\lambda Z_n}\right]=e^{-xF_n(\lambda)}$$ where $F_n:=F\circ\cdots\circ F$ is the $n$-th iterate of $F$.
 Hence, since $H$ is the genealogical height in the splitting tree ,
 $$\px(\tau_a<T_0)=\px(Z_a\neq0)=1-\exp\left(-x\lim_{\lambda\rightarrow\infty}F_a(\lambda)\right)=1$$
 by monotone convergence and because the mass of $\Lambda$ is infinite.\\

We now make the proof of Theorem \ref{conditionnment hauteur}(i) and suppose that $b$ is finite.
\begin{equation}\label{decompositionproba}\px\left(\Theta,t<T_0\left|\sup_{s\leq T_0}H_s\geq a\right.\right)=\frac{\px\left(\Theta,\tau_a\leq t<T_0\right)+\px\left(\Theta,t<\tau_a<T_0\right)}{\px(\tau_a<T_0)}
\end{equation}
We will investigate the asymptotic behaviors of the three probabilities in the last display as $a\rightarrow\infty$.

As previously, $H$ is the genealogical height in the splitting tree
but in this case, we can use the process $(\mathcal Z_a,a\geq1)$
defined by (\ref{GaltonWatson}). As explained above, this process is a BGW process with offspring generating function
defined by (\ref{offspringdistrib}) and such that under $\px$,
$\mathcal Z_1$ has a Poisson distribution with parameter $bx$. With
an easy computation, one sees that the mean offspring number equals
$m=\int_{(0,\infty)}r\Lambda(\dif r)\leq1$. Hence the BGW-process is
critical or subcritical.
We have $$\px(\tau_a<T_0)=\px(\mathcal Z_a\neq0)=\sum_{k\geq0}\px(\mathcal Z_a\neq0|\mathcal Z_1=k)\px(\mathcal Z_1=k)$$
and by the branching property,
$$\px(\mathcal Z_a\neq0|\mathcal Z_1=k)=1-(1-\p(\mathcal Z_{a-1}\neq0))^k\underset{a\rightarrow\infty}\sim k\p(\mathcal Z_{a-1}\neq0).$$

We first treat the subcritical case. According to Yaglom \cite{Yaglom}, if $(\mathcal Z_n,n\geq0)$ is subcritical ($m<1$) and if $\sum_{k\geq1} p_k (k\log k)<\infty$,
then there exists $c>0$ such that
\begin{equation}\label{Yaglom}\lim_{n\rightarrow \infty}\frac{\p(\mathcal Z_n\neq0)}{m^{n}}=c.\end{equation}

In the following lemma, we show that this $\log$-condition holds with assumptions of Theorem \ref{conditionnment hauteur}.
\begin{lem}\label{conditionYaglom}
If $\int_{[1,\infty)}z \log (z)\Lambda(\mathrm{d}z)<\infty$, then
$\sum_{k\geq1} p_k (k\log k)<\infty$.
\end{lem}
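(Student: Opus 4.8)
The plan is to turn the sum into an integral against $\Lambda$ of a Poisson functional, bound that functional by two elementary estimates — a quadratic one that is effective for small intensities and a sharper one of order $\mu\log\mu$ for large intensities — and then integrate.

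\emph{Step 1 (reduction to a Poisson computation).} Since $p_k=\int_{(0,\infty)}\frac{\Lambda(\dif z)}{b}\,\frac{(bz)^k}{k!}\,e^{-bz}$, Tonelli's theorem (all summands being nonnegative) gives
$$\sum_{k\geq1}p_k\,k\log k=\frac1b\int_{(0,\infty)}g(bz)\,\Lambda(\dif z),$$
where $g(\mu):=\Esp\bigl[N_\mu\log N_\mu\bigr]$, $N_\mu$ is a Poisson variable of mean $\mu$, and we use the convention $0\log0:=0$. It thus suffices to control $g$ and check the resulting integral converges.

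\emph{Step 2 (two bounds on $g$).} First, $\log k\leq k-1$ for integers $k\geq1$ gives $k\log k\leq k(k-1)$, hence $g(\mu)\leq\Esp[N_\mu(N_\mu-1)]=\mu^2$ for all $\mu>0$; in particular $g(\mu)\leq\mu$ for $0<\mu\leq1$. Second, for $\mu\geq1$ split the expectation according to whether $N_\mu\leq2\mu$ or not: on $\{N_\mu\leq2\mu\}$ bound $\log N_\mu\leq\log(2\mu)$, which contributes at most $\log(2\mu)\,\Esp[N_\mu]=\mu\log(2\mu)$; on $\{N_\mu>2\mu\}$ bound $\log N_\mu\leq N_\mu$, so the contribution is at most $\Esp[N_\mu^2\2{N_\mu>2\mu}]$, which by Cauchy--Schwarz, the polynomial growth of $\Esp[N_\mu^4]$, and the Chernoff estimate $\p(N_\mu\geq2\mu)\leq e^{-(2\log2-1)\mu}$ (with $2\log2-1>0$) is bounded by a constant uniformly in $\mu\geq1$. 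Hence $g(\mu)\leq C\,\mu(1+\log\mu)$ for $\mu\geq1$, and combining with the first bound, $g(\mu)\leq C\,\mu(1+\log_+\mu)$ for all $\mu>0$, where $\log_+:=\max(\log,0)$.

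\emph{Step 3 (integration).} Plugging this into Step 1 and using $1+\log_+(bz)\leq(1+\log_+b)(1+\log_+z)$,
$$\sum_{k\geq1}p_k\,k\log k\leq C(1+\log_+b)\int_{(0,\infty)}z\bigl(1+\log_+z\bigr)\Lambda(\dif z)=C(1+\log_+b)\Bigl(m+\int_{[1,\infty)}z\log z\,\Lambda(\dif z)\Bigr),$$
which is finite since $b<\infty$, $m\leq1$, and the last integral is finite by hypothesis. The only genuine obstacle is the large-$\mu$ estimate in Step 2: the crude bound $g(\mu)\leq\mu^2$ alone would require a second moment of $\Lambda$ at infinity, whereas we are only given the weaker $z\log z$-integrability, so one must exploit the concentration of $N_\mu$ around its mean (the truncation at $2\mu$ together with the exponential tail bound) to obtain the correct order $\mu\log\mu$.
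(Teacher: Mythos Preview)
Your proof is correct. Both you and the paper begin identically, applying Tonelli to reduce the question to bounding $e^{-bz}\sum_{k\geq2}k\log k\,\frac{(bz)^k}{k!}$ as a function of $z$. The difference lies in how this Poisson functional is estimated. The paper uses the single tangent-line inequality $\log k\leq\frac{k-z}{z}+\log z$ (concavity of $\log$, applied at the integration point $z$ itself), which reduces the sum directly to first and second Poisson moments and yields, after a two-line computation, the integrand $z+1+z\log z$. Your route instead bounds $g(\mu)=\Esp[N_\mu\log N_\mu]$ by a case split on $\{N_\mu\leq2\mu\}$, controlling the complementary event via Cauchy--Schwarz and a Chernoff tail bound. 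Both arguments arrive at the correct order $\mu\log\mu$ for large $\mu$, but the paper's is noticeably shorter and avoids any concentration machinery; yours is more probabilistic in spirit and makes the heuristic ``$N_\mu\approx\mu$'' explicit, at the cost of invoking fourth moments and exponential tail estimates where a one-line convexity bound suffices.
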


\begin{proof}According to (\ref{offspringdistrib}),
{\setlength\arraycolsep{2pt}
\begin{eqnarray}
\sum_{k\geq2}p_kk\log k&=&\sum_{k\geq2}k\log k b^{-1}\int_{(0,\infty)} \Lambda(\mathrm{d}z) \frac{(bz)^k}{k!}e^{-bz}\nonumber\\
&=&b^{-1}\int_{(0,\infty)}
\Lambda(\mathrm{d}z)e^{-bz}\sum_{k\geq2}k\log
k\frac{(bz)^k}{k!}\nonumber
\end{eqnarray}}
by Fubini-Tonelli theorem. Since we have
$$\log k\leq\frac{k-z}{z}+\log z,\qquad k\geq2,z>0,$$

{\setlength\arraycolsep{2pt}
\begin{eqnarray}
\sum_{k\geq2}k\log k\frac{(bz)^k}{k!}&\leq&\sum_{k\geq2}k\left(\frac{k}{z}-1+\log z\right)\frac{(bz)^k}{k!}\nonumber\\
&\leq&\sum_{k\geq2}k\frac{z^{k-1}b^k}{(k-1)!}+z\log z\sum_{k\geq2}\frac{z^{k-1}b^k}{(k-1)!}\nonumber\\
&\leq& b(z+1)e^{bz}+bz\log ze^{bz}\nonumber
\end{eqnarray}}
and $$\sum_{k\geq2}p_kk\log
k\leq\int_{(0,\infty)}\Lambda(\mathrm{d}z)(z+1+z\log z)<\infty.$$
\end{proof}

Then if $\int_{[1,\infty)}r\log r\Lambda(\dif r)<\infty$, the
$\log$-condition of (\ref{Yaglom}) is fulfilled and there exists a
constant $c$ such that
$$\lim_{a\rightarrow\infty}\p(\mathcal Z_{a-1}\neq0|\mathcal Z_1=1)/m^{a-1}=c.$$
Then, $$\lim_{a\rightarrow\infty}\frac{\p(\mathcal Z_a\neq0|\mathcal Z_1=k)}{m^{a-1}}=kc.$$
Moreover, $$\frac{\p(\mathcal Z_a\neq0|\mathcal Z_1=k)}{m^{a-1}}\leq \frac{k\p(\mathcal Z_a\neq0|\mathcal Z_1=1)}{m^{a-1}}\leq Ck$$
and
$$\sum_{k\geq0}Ck\px(\mathcal Z_1=k)=C\ex[\mathcal Z_1]=Cbx<\infty$$
where $C$ is some positive constant.
Hence, using the dominated convergence theorem,
\begin{equation}\label{equivalent1}\lim_{a\rightarrow\infty}\frac{\px(\tau_a<T_0)}{m^{a-1}}=c\ex[\mathcal Z_1]=cbx\end{equation}

Similarly, if $(\mathcal Z_n,n\geq0)$ is critical ($m=1$), since the variance of its reproduction law
$$\sigma^2=\sum_{k\geq1}k^2 p_k-m^2=b\int_{(0,\infty)}z^2\Lambda(\mathrm{d}z)-m+m^2=b\int_{(0,\infty)}z^2\Lambda(\mathrm{d}z)$$
is finite, one also knows \cite{Athreya_Ney} the asymptotic behavior of
$\p(\mathcal Z_n\neq0)$. Indeed, we have Kolmogorov's estimate
\begin{equation}\label{convergencecascritique}
\lim_{n\rightarrow\infty}n\p(\mathcal Z_n\neq0)=\frac{2}{\sigma^2}.
\end{equation}
Then
\begin{equation}\label{equivalent2}\lim_{a\rightarrow\infty}\frac{\px(\tau_a<T_0)}{a-1}=\frac{2}{\sigma^2}bx.\end{equation}

We are now interested in the behavior of $\px\left(\Theta,\tau_a\leq t<T_0\right)$ as $a\rightarrow\infty$. In fact, we will show that it goes to 0 faster than $m^{a-1}$ (resp. $1/a$) if $m<1$ (resp. $m=1$).
Since $b=\Lambda(\R^+)<\infty$, the total number $N_t$ of jumps of $X$ before $t$ has a Poisson distribution with parameter $bt$.
Hence, since $\{\tau_a\leq t\}\subset\{N_t\geq a\}$,
\begin{equation*}\label{cecdel}
\px\left(\Theta,\tau_a\leq t<T_0\right)\leq \p(N_t\geq a)=\sum_{i\geq a}e^{-bt}\frac{(bt)^i}{i!}\leq\frac{(bt)^a}{a!}.
\end{equation*}
Thus, using the last equation and equation (\ref{equivalent1}) or (\ref{equivalent2}), the first term of the r.h.s. of (\ref{decompositionproba}) vanishes as $a\rightarrow\infty$ for $m\leq1$.

We finally study the term $\px\left(\Theta,t<\tau_a<T_0\right)$.
For a word $u$, $i\in \Nat$ and $x\in\R$, we denote by $A(u,i,x)$ the event \{$u$ gives birth before age $x$ to a daughter which has alive descendance at generation $|u|+i$\}, that is, if $|u|=j$,
$$A(u,i,x):=\{\exists v\in \mathcal U; |v|=i,  uv\in\T\textrm { and }\alpha(uv|j+1)-\alpha(u)\leq x\}.$$
Let $v_t$ be the individual visited at time $t$. Hence, using the Markov property at time $t$ and recalling that $\Theta\in\mathcal F_t$, we have
{\setlength\arraycolsep{2pt}
\begin{eqnarray}
\px\left(\Theta,t<\tau_a<T_0\right)&=&\ex\left[\1_{\{t<T_0\}}\1_{\{t<\tau_a\}}\1_\Theta\px\left(\tau_a< T_0|\mathcal F_t\right)\right]\nonumber\\
&=&\ex\left[\1_{\{t<T_0\}}\1_{\{t<\tau_a\}}\1_\Theta\px\left(\left.\bigcup_{i=0}^{H_t}A(v_t|i,a-i,\rho_i^t)\right|\mathcal F_t\right)\right]\nonumber
\end{eqnarray}}
and by the branching property, $$\px\left(\left.\bigcup_{i=0}^{H_t}A(v_t|i,a-i,\rho_i^t)\right|\mathcal F_t\right)
=1-\prod_{i=0}^{H_t}\left(1-\p_{\rho_i^t}(\mathcal Z_{a-i}\neq0)\right)\as$$
As previously, since computations for subcritical and critical cases are equivalent, we only detail the first one. We have, with another use of (\ref{Yaglom}),
$$\px\left(\left.\bigcup_{i=0}^{H_t}A(v_t|i,a-i,\rho_i^t)\right|\mathcal F_t\right)\underset{a\rightarrow\infty}\sim\sum_{i=0}^{H_t}m^{a-i-1}\rho_i^tcb\as$$
We want to use the dominated convergence theorem to prove that
$$\lim_{a \rightarrow\infty}m^{1-a}\px\left(\Theta,t<\tau_a<T_0\right)=cb\ex\left[\1_{\{t<T_0\}}\1_\Theta\sum_{i=0}^{H_t}\rho_i^tm^{-i}\right]$$
and then, using (\ref{equivalent1}),
$$\lim_{a\rightarrow\infty}\px\left(\Theta,t<T_0\left|\sup_{s\leq T_0}H_s\geq a\right.\right)=\frac{1}{x}\ex\left[\1_{\{t<T_0\}}\1_\Theta\sum_{i=0}^{H_t}\rho_i^tm^{-i}\right]$$
so that the proof of the subcritical case would be finished.
We have almost surely
$$m^{-a}\px(\tau_a<T_0|\mathcal F_t)\2{t<T_0}\leq m^{-a}\sum_{i=0}^{H_t}\px(A(v_t|i,a-i,\rho_i^t))\2{t<T_0}\leq C'\sum_{i=0}^{H_t}\rho_i^tm^{-i}\1_{\{t<T_0\}}$$
 where $C'$ is a positive, deterministic constant.
Hence, to obtain an integrable upper bound, since $\ex[\rho_0^t\1_{\{t<T_0\}}]\leq x$, it is sufficient to prove that
\begin{equation}\label{domination}\ex\left[\sum_{i=1}^{H_t}\rho_i^tm^{-i}\right]<\infty
\end{equation}
in order to use the dominated convergence theorem.
Recall that $X^{(t)}$ denotes the time-reversal of $X$ at time $t$
$$X_s^{(t)}:=X_{t-}-X_{(t-s)-}\quad s\in[0,t],\quad (X_{0-}=X_0)$$ and $S^{(t)}$ is its associated past supremum process
$$S_s^{(t)}:=\sup\{X_r^{(t)}:0\leq r\leq s\}.$$
It is known that the process $X^{(t)}$ has the law of $X$ under $\mathbb P_0$ \cite[ch.II]{Levy_processes}.
We also have
$$H_t=R_t:=\#\left\{0\leq s\leq t;X^{(t)}_s=S^{(t)}_s\right\}$$
which is the number of records of the process $S ^{(t)}$ during $[0,t]$ and
the $\rho_i^t$'s are the overshoots of the successive records.
More precisely, if we denote by $\tilde T_1<\tilde T_2<\cdots$ the record times of $X^{(t)}$, the overshoots are
 $$\tilde\rho_i:=X^{(t)}_{\tilde T_i}-\sup_{0\leq s<\tilde T_i}X_s^{(t)},\qquad i\geq1.$$
We come back to the proof of (\ref{domination}). We have
$$\sum_{i=1}^{H_t}\rho_i^tm^{-i}=\sum_{i=1}^{R_t}\tilde\rho_{R_t-i+1}m^{-i}=\sum_{i=1}^{R_t}\tilde\rho_im^{i-R_t-1}
=\sum_{i\geq1}\tilde\rho_im^{i-R_t-1}\1_{\{\tilde T_i\leq t\}}.$$
We denote by $(\widetilde{\mathcal F}_s,s\geq0)$ the natural filtration of $X^{(t)}$.
Thus, using Fubini-Tonelli theorem and the strong Markov property for $X^{(t)}$ applied at time $\tilde T_i$
$$\ex\left[\sum_{i=1}^{H_t}\rho_i^tm^{-i}\right]=\sum_{i\geq1}
\Esp\left[\tilde\rho_im^{i-R_t-1}\1_{\{\tilde T_i\leq t\}}\right]=
m^{-1}\sum_{i\geq1}\Esp\left[\tilde\rho_i\1_{\{\tilde T_i\leq t\}}f(t-\tilde T_i)\right]$$
where $$f(s)=\Esp\left[\frac{1}{m^{R_s}}\right],\qquad s\geq0.$$
However, as previously, we have almost surely $R_t\leq N_t$ where $N_t$ is the number of jumps of $X$ or $X^{(t)}$ before $t$ and so has a Poisson distribution with parameter $bt$.
Thus, $$f(s)\leq\Esp[m^{-N_s}]=\exp\left(bs(m^{-1}-1)\right)=e^{\kappa s}$$ where $\kappa$ is some positive constant and
$$\ex\left[\sum_{i=1}^{H_t}\rho_i^tm^{-i}\right]\leq m^{-1}e^{\kappa t}\sum_{i\geq1}\Esp\left[\tilde\rho_i;\tilde T_i\leq t\right].$$
Moreover,
{\setlength\arraycolsep{2pt}
\begin{eqnarray}
\Esp\left[\tilde\rho_i;\tilde T_i\leq
t\right]&=&\Esp\left[\tilde\rho_i\1_{\{\tilde T_i-\tilde T_{i-1}\leq
t-\tilde T_{i-1}\}}\1_{\{\tilde T_{i-1}\leq
t\}}\right]=\int_0^t\p(\tilde T_{i-1}\in \dif s)
\Esp\left[\tilde\rho_1;\tilde T_1\leq t-s\right]\nonumber\\
&\leq&\Esp\left[\tilde\rho_1;\tilde T_1\leq t\right]\p(\tilde T_{i-1}\leq t)\nonumber
\end{eqnarray}}
since $\left((\tilde \rho_i,\tilde T_i-\tilde T_{i-1}),i\geq1\right)$ are i.i.d. random variables.
Then
$$\ex\left[\sum_{i=1}^{H_t}\rho_i^tm^{-i}\right]\leq m^{-1}e^{\kappa t}\Esp\left[\tilde\rho_1;\tilde T_1\leq t\right]\sum_{i\geq0}\p(\tilde T_i\leq t)
=m^{-1}e^{\kappa t}\Esp\left[\tilde\rho_1;\tilde T_1\leq t\right]\sum_{i\geq0}\p(R_t\geq i).$$
The sum in the r.h.s equals $\Esp[R_t]$ which is finite since $R_t\leq N_t$ a.s.
According to Theorem VII.17 in \cite{Levy_processes},
the joint law of $\left(X^{(t)}_{\tilde T_1},\Delta X^{(t)}_{\tilde T_1}\right)$ is given by
\begin{equation}\label{loi_rho_i}
\Esp\left[F\left(X^{(t)}_{\tilde T_1},\Delta X^{(t)}_{\tilde
T_1}\right);\tilde T_1<\infty\right] =\int_{(0,\infty)}\Lambda(\dif
y)\int_0^y\dif xF(x,y).
\end{equation}
Thus, since $\{\tilde T_1\leq t\}\subset\left\{X^{(t)}_{\tilde T_1-}\geq -t\right\}$,
{\setlength\arraycolsep{2pt}
\begin{eqnarray}
\Esp\left[\tilde\rho_1;\tilde T_1\leq t\right]&\leq& \Esp\left[\tilde\rho_1\1_{\left\{\Delta X^{(t)}_{\tilde T_1}-X^{(t)}_{\tilde T_1}\leq t\right\}}\1_{\{\tilde T_1<\infty\}}\right]
=\int_{(0,\infty)}\Lambda(\dif y)\int_0^yx\1_{\{y-x\leq t\}}\dif x\nonumber\\
&=&\int_{(t,\infty)}\Lambda(\dif
y)(yt-t^2/2)+\int_{(0,t]}\Lambda(\dif y)y^2/2<\infty\nonumber
\end{eqnarray}}
and the proof of (\ref{domination}) is completed.

Finally, in the critical case $m=1$, computations are similar.
Thanks to (\ref{convergencecascritique}),
$$\px\left(\left.\bigcup_{i=0}^{H_t}A(v_t|i,a-i,\rho_i^t)\right|\mathcal F_t\right)\underset{a\rightarrow\infty}\sim \frac{1}{a}\sum_{i=0}^{H_t}\rho_i^t\frac{2}{\sigma^2}b=\frac{1}{a}\frac{2b}{\sigma^2}X_t\as$$
by the definition of the $\rho_i^t$'s. Moreover, when $m=1$, the process $X$ is a $(\mathcal F_t)$-martingale and $\ex[X_t]=\ex[X_0]=x$. Then, by the dominated convergence theorem,
$$\lim_{a \rightarrow\infty}a\px\left(\Theta,t<\tau_a<T_0\right)=\frac{2b}{\sigma^2}\ex\left[X_{t\wedge T_0}\1_\Theta\right]$$
and then, using (\ref{equivalent2}),
$$\lim_{a\rightarrow\infty}\px\left(\Theta,t<T_0\left|\sup_{s\leq T_0}H_s\geq a\right.\right)=\frac{1}{x}\ex\left[X_{t\wedge T_0}\1_\Theta\right].$$
\quad\\

We conclude the proof by showing that $(M_{t\wedge T_0},t\geq0)$ is
a $(\mathcal F_t)$-martingale. First, if $m=1$, we have $M_t=X_t$
and as it was stated just before, in this case, the process $X$ is a martingale and so is $X_{\cdot\wedge T_0}$.
We now consider the case $m<1$. Recall from (\ref{generateur_rho_t}) that the infinitesimal generator of
$(\rho^{t\wedge T_0},t\geq 0)$ is
{\setlength\arraycolsep{2pt}
\begin{eqnarray*}
\mathcal A(f)(\nu)&=&\left[\int_{(0,\infty)}\!\!\left(f(\nu_0,\nu_1,\dots,\nu_{H(\nu)},r,0,\dots)
-f(\nu_0,\nu_1,\dots,\nu_{H(\nu)},0,\dots)\right)\Lambda(\dif r)\right.\\
&&\qquad\qquad\qquad\left.-\frac{\partial f}{\partial
x_{H(\nu)}}(\nu_0,\dots,\nu_{H(\nu)},0,\dots)\right]\1_{\{\nu\neq0\}},\qquad \nu\in S.
\end{eqnarray*}}
Let $g$ be the application from $S$ to $\R$ such that
\begin{equation}g(\nu):=\sum_{i=0}^{H(\nu)}\nu_im^{-i},\quad \nu\in S.\label{fonctiong}
\end{equation}
Clearly, we have $g(\rho^{t\wedge T_0})=M_{t\wedge T_0}$. According to
\cite[ch.VII]{Revuz1999}, if $\mathcal G_t:=\{\sigma(\rho^r),0\leq
r\leq t\}$, to prove that $M$ is a $(\mathcal G_t)$-martingale, it
is sufficient to show that $\mathcal A(g)=0$. For $\nu\in S$, we have
{\setlength\arraycolsep{2pt}
\begin{eqnarray}
\mathcal A(g)(\nu)&=&\left(\int_{(0,\infty)}(g(\nu)+rm^{-H(\nu)-1}-g(\nu))\Lambda(\dif r)-m^{-H(\nu)}\right)\1_{\{\nu\neq0\}}\nonumber\\
&=&\left(\int_{(0,\infty)}r\Lambda(\dif
r)m^{-H(\nu)-1}-m^{-H(\nu)}\right)\1_{\{\nu\neq0\}}=0.\nonumber
\end{eqnarray}}
Then, $M_{\cdot\wedge T_0}$ is a
$(\mathcal G_t)$-martingale. Moreover, since $\langle\rho^t,\1\rangle=X_t$, $\mathcal F_t\subset \mathcal G_t$
and clearly $\mathcal G_t\subset \mathcal F_t$. Then, $\mathcal F_t=\mathcal G_t$ and $M_{\cdot\wedge T_0}$ is a
$(\mathcal F_t)$-martingale.
\hfill$\square$

\subsection{A new probability}
\begin{defi}\label{defi_pfl}
For $x>0$, we define a new probability measure $\p^\star_x$ on $(\Omega, \mathcal F_t)$ by
$$\p^\star_x(\Theta)=\frac{1}{x}\Esp_x\left[M_{t\wedge T_0}\1_{\Theta}\right],\quad \Theta\in\mathcal F_t.$$
\end{defi}
First, notice that since $M_{\cdot\wedge T_0}$ is a martingale, this definition is consistent and that this new measure is honest, that is, it has mass 1.
Second, $(X_t,t\geq0)$ is no more Markovian under $\p^\star_x$
while $(\rho^t,t\geq0)$ is because $\p^\star_x$ is obtained by a $h$-transform via
the application $g$ defined by (\ref{fonctiong}).

\begin{prop}\label{convergencenouvelleproba}
Under $\p^\star_x$, in probability,
$$X_t{\underset{t\rightarrow\infty}\longrightarrow}+\infty.$$
\end{prop}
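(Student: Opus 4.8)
The plan is to prove the equivalent assertion that $\p^\star_x(X_t\le K)\to0$ as $t\to\infty$, for every fixed $K>0$. By Definition \ref{defi_pfl}, and since $M_{t\wedge T_0}$ vanishes on $\{t\ge T_0\}$ (there $\rho^{t\wedge T_0}$ is the null measure), one has $\p^\star_x(X_t\le K)=\tfrac1x\Esp_x\big[M_t\,\1_{\{t<T_0,\,X_t\le K\}}\big]$. On $\{X_t\le K\}$ every $\rho_i^t$ is $\le X_t\le K$ and, as $m\le1$, $\sum_{i=1}^{H_t}\rho_i^t m^{-i}\le m^{-H_t}\sum_{i=1}^{H_t}\rho_i^t\le m^{-H_t}X_t\le K\,m^{-H_t}$, so that $M_t\,\1_{\{X_t\le K\}}\le 2K\,m^{-H_t}$ and it suffices to show that $\Esp_x[m^{-H_t}\,\1_{\{t<T_0,\,X_t\le K\}}]\to0$.

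The heart of the matter is a bound on $\Esp_x[m^{-H_t}\,\1_{\{X_t\le K\}}]$ that is \emph{uniform in $t$}, obtained by reusing the time-reversal from the proof of Theorem \ref{conditionnment hauteur}. Recall that $H_t=R_t$ is the number of records of $X^{(t)}$, that $(\rho_1^t,\dots,\rho_{H_t}^t)$ lists the positive overshoots $\tilde\rho_1,\dots,\tilde\rho_{R_t}$ of these records (in reverse order), that the pairs $(\tilde\rho_i,\tilde T_i-\tilde T_{i-1})$ are i.i.d., and that by (\ref{loi_rho_i}) we have $\px(\tilde T_1<\infty)=\int_{(0,\infty)}y\,\Lambda(\dif y)=m$ while, on $\{\tilde T_1<\infty\}$, the law of $\tilde\rho_1$ is $\Lambda((x,\infty))\,\dif x=m\,\mu(\dif x)$ for a probability measure $\mu$ carried by $(0,\infty)$ (so that the renewal function $U_\mu$ of $\mu$ is finite). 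On $\{X_t\le K\}$ one has $\tilde\rho_1+\dots+\tilde\rho_{H_t}=X_t-\rho_0^t\le K$, and $\{H_t\ge n\}\subset\{\tilde T_n<\infty\}$; hence, by independence of the $(\tilde\rho_i,\tilde T_i-\tilde T_{i-1})$ and with $(Y_i)$ i.i.d.\ of law $\mu$,
$$\Esp_x\!\left[m^{-H_t}\,\1_{\{X_t\le K\}}\right]\ \le\ \sum_{n\ge0}m^{-n}\,\px\!\big(\tilde T_n<\infty,\ \tilde\rho_1+\dots+\tilde\rho_n\le K\big)\ =\ \sum_{n\ge0}\p\big(Y_1+\dots+Y_n\le K\big)\ =\ U_\mu(K)<\infty ,$$
the exact cancellation between $m^{-n}$ and $\px(\tilde T_n<\infty)=m^{n}$ turning the series into $U_\mu(K)$. (I gloss over a possible shift of $1$ in the index, which a factor $m^{-1}$ absorbs.) Applying this to the tails of the series also shows that $\{m^{-H_t}\,\1_{\{X_t\le K\}}\}_{t\ge0}$ is uniformly integrable.

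It then remains to let $t\to\infty$: since $X$ does not drift to $+\infty$, $T_0<\infty$ $\px$-a.s., so $\1_{\{t<T_0\}}\to0$ $\px$-a.s.\ and thus $m^{-H_t}\,\1_{\{t<T_0,\,X_t\le K\}}\to0$ $\px$-a.s.; combined with the uniform integrability just established, this yields $\Esp_x[m^{-H_t}\,\1_{\{t<T_0,\,X_t\le K\}}]\to0$, hence $\p^\star_x(X_t\le K)\to0$. (When $m=1$ the argument trivializes, for then $m^{-H_t}\equiv1$ and $M=X$; in fact one then even gets $X_t\to\infty$ $\p^\star_x$-a.s., since $1/X_t$ is a nonnegative $\p^\star_x$-supermartingale with $\Esp^\star_x[1/X_t]=\tfrac1x\px(T_0>t)\to0$, whence its a.s.\ limit is $0$ by Fatou.)

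The step I expect to cost the most care is the uniform bound of the second paragraph: in the subcritical regime $H_t$ is not a priori bounded on $\{X_t\le K\}$, so $m^{-H_t}$ has no obvious fixed integrable majorant, and one really needs the identity $m^{-n}\px(\tilde T_n<\infty)=1$ together with the absence of an atom of $\mu$ at $0$ (which makes the renewal sum converge).
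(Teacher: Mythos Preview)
Your argument is correct and takes a genuinely different route from the paper's. The paper works with the Laplace transform $\efl[e^{-\lambda X_t}]$, splits according to $\{H_t\le a\}$ versus $\{H_t>a\}$, and pushes the second piece to zero uniformly in $t$ via rather explicit ladder estimates (bounding the auxiliary function $f(s)=\Esp[\prod_{j\le R_s}e^{-\lambda\tilde\rho_j}m^{-R_s}]$, then controlling each $B_i(t,a)$ separately for $i\le a$ and $i>a$). Your approach replaces all of this by the single observation that, after time-reversal, $m^{-n}\p(\tilde T_n<\infty,\sum_{i\le n}\tilde\rho_i\le K)=\p(Y_1+\dots+Y_n\le K)$ with $Y_i\sim\mu(\dif y)=m^{-1}\Lambda((y,\infty))\dif y$, so that the bound collapses to the renewal function $U_\mu(K)<\infty$, uniformly in $t$; the tail of this convergent series then gives uniform integrability, and $\1_{\{t<T_0\}}\to0$ a.s.\ finishes the job. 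This is shorter and more conceptual. Two small points of care: the $\px$ in your key identity really refers to the law of $X^{(t)}$, i.e.\ to $\p_0$, as in the paper; and the finiteness of $U_\mu(K)$ deserves a word (since $\mu$ has a density on $(0,\infty)$, $\mu(\{0\})=0$ suffices, via a standard large-deviation bound on the number of $Y_i>\delta$). Conversely, the paper's Laplace-transform scheme has the advantage of transferring almost verbatim to the infinite-variation setting of Proposition~\ref{convergencenouvelleproba2}, where no discrete renewal structure is available; your argument is tailored to the compound Poisson case. Your aside that $1/X$ is a nonnegative $\p^\star_x$-supermartingale when $m=1$ (hence $X_t\to\infty$ a.s.) is a nice strengthening not present in the paper.
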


\begin{proof}
It is sufficient to prove that for all positive $\lambda$, $\Esp^\star_x\left[e^{-\lambda X_t}\right]\rightarrow0$ as $t$
tends to $\infty$.
First, in the critical case $m=1$,
\begin{equation}\label{conv}
\Esp^\star_x\left[e^{-\lambda X_t}\right]=\frac{1}{x}\Esp_x\left[e^{-\lambda X_t}X_{t\wedge T_0}\right]
\underset{t\rightarrow\infty}\longrightarrow0
\end{equation}
using dominated convergence theorem, because $(X_{t\wedge T_0},t\ge0)$ is a non-negative martingale converging a.s. to 0 and the mapping $x\mapsto
e^{-\lambda x}x$ is bounded on $\R^+$.

We now suppose that $m<1$. Then, for all positive $a$, distinguishing between $H_t>a$ and $H_t\leq a$, we have
{\setlength\arraycolsep{2pt}
\begin{eqnarray}
\Esp^\star_x\left[e^{-\lambda X_t}\right]&=&\frac{1}{x}\Esp_x\left[e^{-\lambda X_t}\sum_{i=0}^{H_t}m^{-i}\rho_i^t\1_{\{t\leq T_0\}}\right]\nonumber\\
&\leq&\frac{1}{x}\Esp_x\left[e^{-\lambda X_t}\sum_{i=1}^{H_t}m^{-i}\rho_i^t\1_{\{t\leq T_0\}}\1_{\{H_t>a\}}\right]+\px(t\leq T_0)+\frac{m^{-a}}{x}\Esp_x\left[e^{-\lambda X_t}X_{t\wedge T_0}\right]\nonumber
\end{eqnarray}}
since $\sum_i\rho_i^t=X_t$.
The second term of the r.h.s is an upper bound of the term $i=0$ of the sum and it goes to $0$ as $t$ tends to $\infty$ since $T_0$ is finite a.s.
So does the third term similarly to the critical case in (\ref{conv}).
Then, to finish the
proof, it is sufficient to show that the first term tends to 0 uniformly in $t$ as $a$ goes to $\infty$.
We have
$$A(t,a):=\Esp_x\left[e^{-\lambda X_t}\sum_{i=1}^{H_t}m^{-i}\rho_i^t\1_{\{t\leq T_0\}}\1_{\{H_t>a\}}\right]\leq
\Esp_x\left[e^{-\lambda (X_t-I_t)}\sum_{i=1}^{H_t}m^{-i}\rho_i^t\1_{\{t\leq T_0\}}\1_{\{H_t>a\}}\right]$$
and as in proof of Theorem \ref{conditionnment hauteur}, by time reversing at time $t$, we have

$$A(t,a)\leq\sum_{i\geq1}\Esp\left[e^{-\lambda S_t}m^{-(R_t-i+1)}\tilde\rho_i\1_{\{\tilde T_i\leq t \}}\1_{\{R_t>a\}}\right]=\sum_{i\geq1}m^{i-1}B_i(t,a)$$
where the $\tilde T_i$'s are the times of successive records of the supremum process $S$, the $\tilde \rho_i$'s
are the associated overshoots and $R_t$ is the number of records up to time $t$.
In particular, $\displaystyle S_t=\sum_{j=1}^{R_t}\tilde\rho_j$.

On the one hand, let us treat the case when $i\leq a$. Since $\{R_t>a\}\subset\{\tilde T_a\leq t \}$ and by applying the strong Markov property
 at time $\tilde T_a$,
{\setlength\arraycolsep{2pt}
\begin{eqnarray}
B_i(t,a)&\leq&
\Esp\left[\prod_{j=1}^{R_t} e^{-\lambda \tilde\rho_j}m^{-R_t}\tilde\rho_i\1_{\{\tilde T_a\leq t\}}\right]\nonumber\\
&=&\Esp\left[\prod_{j=1}^{a} e^{-\lambda \tilde\rho_j}\tilde\rho_i\1_{\{\tilde T_a\leq t\}}
\Esp\left[\left.\prod_{j=a+1}^{R_t}e^{-\lambda \tilde\rho_j}m^{-R_t}\right|\widetilde {\mathcal F}_{\tilde T_a}\right]\right]\nonumber\\
&=&\Esp\left[\prod_{j=1}^{a} e^{-\lambda \tilde\rho_j}\tilde\rho_i\1_{\{\tilde T_a\leq t\}}m^{-a}f(t-\tilde T_a)\right]\label{stava}
\end{eqnarray}}
where for $s\geq 0$, $f(s):=\Esp\left[\prod_{j=1}^{R_s}e^{-\lambda \tilde\rho_j}m^{-R_s}\right]$.
This function $f$ is bounded. Indeed, for $s\geq0$,
{\setlength\arraycolsep{2pt}
\begin{eqnarray}f(s)&=&\sum_{i\geq1}m^{-i}\Esp\left[\prod_{j=1}^ie^{-\lambda \tilde \rho_j};\tilde T_i\leq s<\tilde T_{i+1}\right]
\leq\sum_{i\geq1}m^{-i}\Esp\left[\prod_{j=1}^i\left(e^{-\lambda \tilde \rho_j}\1_{\{\tilde T_j-\tilde T_{j-1}\leq s\}}\right)\right]\nonumber\\
&\leq&\sum_{i\geq1}m^{-i}\Esp\left[e^{-\lambda \tilde \rho_1};\tilde T_1<\infty\right]^i
\nonumber
\end{eqnarray}}
since the r.v. $(\tilde \rho_j,\tilde T_j-\tilde T_{j-1})_{j\geq 1}$ are i.i.d.
Applying equation (\ref{loi_rho_i}) with the function $F(x,y)=e^{-\lambda x}$, we have
$$\Esp\left[e^{-\lambda \tilde\rho_1}\1_{\{\tilde T_1<\infty\}}\right]=\int_{(0,\infty)}\Lambda(\dif y)\frac{1-e^{-\lambda y}}{\lambda}
=1-\frac{\psi(\lambda)}{\lambda}.$$
Then, $$f(s)\leq \sum_{i\geq1}m^{-i}\left(1-\frac{\psi(\lambda)}{\lambda}\right)^i<\infty$$
since $1-\psi(\lambda)/\lambda<m$ for all positive $\lambda$ because $m=1-\psi'(0)$ and $\psi$ is strictly convex.

We come back to (\ref{stava}). If we denote by $C$ an upper bound for $f$, for $i\leq a$,
{\setlength\arraycolsep{2pt}
\begin{eqnarray}
B_i(t,a)&\leq& Cm^{-a}
\prod_{j=1,j\neq i}^{a}\Esp\left[e^{-\lambda \tilde\rho_j}\1_{\{\tilde T_j-\tilde T_{j-1}\leq t\}}\right]\Esp\left[e^{-\lambda \tilde\rho_i}\tilde\rho_i\1_{\{\tilde T_i-\tilde T_{i-1}\leq t\}}\right]\nonumber\\
&\leq&Cm^{-a}
\Esp\left[e^{-\lambda \tilde\rho_1}\1_{\{\tilde T_1<\infty\}}\right]^{a-1}
\Esp\left[e^{-\lambda \tilde\rho_1}\tilde\rho_1\1_{\{\tilde T_1<\infty\}}\right]\nonumber
\end{eqnarray}}
Applying again (\ref{loi_rho_i}) with  $F(x,y)=xe^{-\lambda x}$, we obtain
\begin{equation}\label{loi_rho_i_exp_rho_i}
\Esp\left[\tilde\rho_1e^{-\lambda \tilde\rho_1}\1_{\{\tilde T_1<\infty\}}\right]=\frac{\psi'(\lambda)}{\lambda}-\frac{\psi(\lambda)}{\lambda^2}<\infty.
\end{equation}
Hence,
{\setlength\arraycolsep{2pt}
\begin{eqnarray}
\sum_{i=1}^am^{i-1}B_i(t,a)
&\leq&
C(\lambda)m^{-a+1}\left(1-\frac{\psi(\lambda)}{\lambda}\right)^{a-1}\sum_{i=1}^am^i\nonumber\\
&\leq& C(\lambda)am^{-a+1}\left(1-\frac{\psi(\lambda)}{\lambda}\right)^{a-1}.\label{majoration1}
\end{eqnarray}}

On the other hand, for $i>a$, as in previous computations, we apply the strong Markov property at time $\tilde T_i$
to obtain
{\setlength\arraycolsep{2pt}
\begin{eqnarray}
\sum_{i\geq a+1}m^{i-1}B_i(t,a)
&\leq&
\sum_{i\geq a+1}\Esp\left[\prod_{j=1}^ie^{-\lambda\tilde \rho_j}\tilde\rho_i\1_{\{\tilde T_i\leq t \}}m^{-1}f(t-\tilde T_i)\right]\nonumber\\
&\leq& Cm^{-1}\sum_{i\geq a+1}\Esp\left[e^{-\lambda \tilde\rho_i} \tilde\rho_i\1_{\{\tilde T_i-\tilde T_{i-1}\leq t \}}\1_{\{\tilde T_{i-1}\leq t \}}\right]\nonumber\\
&\leq&Cm^{-1}\left(\frac{\psi'(\lambda)}{\lambda}-\frac{\psi(\lambda)}{\lambda^2}\right)\sum_{i\geq a+1}\p(R_t\geq i-1)\nonumber
\end{eqnarray}}
where we have first used the previous function $f$ and its upper bound $C$ and then that $(\tilde\rho_i,\tilde T_i-\tilde T_{i-1})$ is independent from $\tilde T_{i-1}$
and equation (\ref{loi_rho_i_exp_rho_i}).

Then
\begin{equation}\sum_{i\geq a+1}m^{i-1}B_i(t,a)
\leq\tilde C(\lambda)\sum_{i\geq a}\p(R_\infty\geq i)\label{majoration2}
\end{equation}
where $\tilde C(\lambda)$ is a finite constant independent from $t$ and $a$ and $R_\infty:=\lim_{t\rightarrow+\infty} R_t\in\Nat$ is the total number of records of $S$.
Moreover, thanks to the strong Markov property, $R_\infty$ follows a geometric distribution with success probability $\p(\tilde T_1=\infty)=1-m$
according to (\ref{loi_rho_i}) with $F\equiv1$.

Finally, putting together (\ref{majoration1}) and (\ref{majoration2}), we obtain
\begin{equation}\label{bigmajoration}
A(t,a)\leq \sum_{i\geq 1}m^{i-1}B_i(t,a)\leq \overline C(\lambda)\left(am^{-a+1}\left(1-\frac{\psi(\lambda)}{\lambda}\right)^{a-1}+\sum_{i\geq a}\p(R_\infty\geq i)\right)
\end{equation}
where $\overline C(\lambda)$ is a finite constant independent from $t$ and $a$. The first term of the r.h.s. of (\ref{bigmajoration})
goes to 0 as $a$ goes to $\infty$ since $1-\psi(\lambda)/\lambda<m$. The second term also tends to 0 because $\Esp[R_\infty]=(1-m)^{-1}$ is finite.
Hence, we have proved that $A(t,a)$ tends to 0 as $a\rightarrow\infty$ uniformly in $t$ and $\efl[e^{-\lambda X_t}]\rightarrow0$ which ends the proof.
\end{proof}

\section{Conditioned Subcritical and Critical Splitting Trees}\label{arbre conditionné}
In the previous section, we have defined the law of a conditioned L\'evy process by using properties of (sub)critical splitting trees. Here, we study conditioned splitting trees by using the same properties about BGW-processes. More precisely, we are interested in the behavior as $n\rightarrow\infty$ of a (sub)critical splitting tree conditioned to be
alive at generation $n$ and we want to give a spine decomposition of the limiting tree.
We also give the distribution of its contour process which visits the left part of the spine.

We will use notation of Section 4.4 of \cite{Amaury_cours_Mexique}. For $n\geq0$ and $u\in\T$, we denote by $E_n(u)$ the event \{$u$ has an extant descendance at generation $|u|+n$\}. For simplicity of notation, we will denote by $E_n$ the event $E_n(\emptyset)$.
Let $\mathbf P^n$ be the law of the splitting tree  on this event
$$\mathbf P^n:=\mathbf P(\cdot|E_n)=\mathbf P(\cdot|\mathcal Z_n>0).$$

On the event $E_n$, we define a distinguished lineage $u^n_0u^n_1\cdots u^n_n$ as the first lineage of the tree that reaches generation $n$ as explained below.
First, $u^n_0=\emptyset$ and one defines recursively: for $i\geq1$, $u^n_0u^n_1\cdots u^n_i$ is the youngest daughter of $u^n_0u^n_1\cdots u^n_{i-1}$
that has a descendance at generation $n$. We set $U^{(n)}:=u^n_0u^n_1\cdots u^n_n$ and we denote by
$$\mathbb B^n:=\{x\in\TT;p_1(x)=U^{(n)}|k\textrm{ for }0\leq k\leq n\}$$ this lineage of $\TT$ defined on the event $E_n$.
For $0\leq k\leq n-1$, the age at which the individual $U^{(n)}|k$ gives birth to individual $U^{(n)}|k+1$ is
$$A^n_k:=\alpha(U^{(n)}|k+1)-\alpha(U^{(n)}|k),$$
its residual lifetime is
$$R^n_k:=\omega(U^{(n)}|k)-\alpha(U^{(n)}|k+1)$$
and its total lifelength is denoted by $T_k^n=A_k^n+R_k^n$.
Observe that modulo labelling, the sequence $\big((A^n_k,R^n_k),0\leq k\leq n-1\big)$ and $A_n^n:=\omega(U^{(n)})-\alpha(U^{(n)})$ characterizes the spine $\mathbb B^n$.\\

In the following, when we say that a tree (marked with a special lineage) converges for finite truncations,
we mean that there is convergence for events that are measurable with respect to the $\sigma$-field generated by the truncations
$\{x\in\TT;p_2(x)\leq\sigma\},\ \sigma>0$.

\begin{thm}\label{conditionnement_splitting_tree}
We suppose that $b<\infty$ and that either $m<1$ and
$\int_{[1,\infty)}z \log (z)\Lambda(\mathrm{d}z)<\infty$ or $m=1$ and
$\int_{(0,\infty)}z^2\Lambda(\mathrm{d}z)<\infty$. Then, as
$n\rightarrow\infty$, the law of $\mathbb B^n$ under $\mathbf P^n$
converges for finite truncations to an infinite spine $\mathbb B$
whose distribution is characterized by an i.i.d sequence
$(A_k,R_k)_{k\geq0}$ such that
$$(A_0,R_0)\overset{(d)}=(UD,(1-U)D)$$ where $U$ is a uniform r.v. on $(0,1)$ independent from the size-biased r.v. $D$
$$\mathbf P(D\in \mathrm{d}z)=\frac{z\Lambda(\mathrm{d}z)}{m}\quad z>0.$$
Moreover, if $(x_i,i\in \Nat)$ are the atoms of a Poisson measure on $\mathbb B$ with intensity $b$, then as $n\rightarrow\infty$, the law of $\TT$ under $\mathbf P^n$ converges for finite truncations to an infinite tree with a unique infinite branch $\mathbb B$
on which are grafted at points $(x_i)$ i.i.d. trees with common law $\mathbf P$.
\end{thm}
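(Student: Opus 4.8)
The plan is to mirror the proof of Theorem~\ref{conditionnment hauteur}: decompose the tree under $\mathbf P^n=\mathbf P(\cdot\,|E_n)$ along the distinguished lineage $\mathbb B^n$ into the ``local data'' carried by each spine individual and the subtrees grafted aside, and then let $n\to\infty$, feeding in Yaglom's estimate~(\ref{Yaglom}) (legitimate here by Lemma~\ref{conditionYaglom}) in the subcritical case and Kolmogorov's estimate~(\ref{convergencecascritique}) in the critical case. Write $q_j:=\mathbf P(\mathcal Z_j\neq0)$; then $\mathbf P(E_n)=q_n\to0$, and~(\ref{Yaglom}) resp.~(\ref{convergencecascritique}) give $q_{n-1}/q_n\to 1/m$ in both cases.

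The heart of the matter is one spine step, which I would carry out at the root. Let $L_0$ be the lifespan of the root (of law $\Lambda(\cdot)/b$ under $\mathbf P$). Given $L_0=\ell$, the daughters of the root form a Poisson process of rate $b$ on $(0,\ell)$ in the age coordinate and, by the branching property, each daughter independently has extant descendance at generation $n$ — say she is \emph{good} — with probability $q_{n-1}$, the subtrees hanging on good (resp.\ bad) daughters being conditionally i.i.d.\ of law $\mathbf P(\cdot\,|E_{n-1})$ (resp.\ $\mathbf P(\cdot\,|\mathcal Z_{n-1}=0)$). After thinning, the good daughters form a Poisson process of rate $\mu:=bq_{n-1}$ on $(0,\ell)$ and the bad ones of rate $b(1-q_{n-1})$, independently given $L_0$. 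Conditioning on $E_n$ is conditioning on having at least one good daughter, and $U^{(n)}|1$ is then the good daughter of largest age. Elementary Poisson computations give: the conditional law of $L_0$ on $E_n$ is $(1-e^{-\mu\ell})\,\Lambda(\dif\ell)/(bq_n)$; given $L_0=\ell$, the residual lifetime $R^n_0=\ell-A^n_0$ has density $\mu e^{-\mu r}/(1-e^{-\mu\ell})$ on $(0,\ell)$; and, given at least one good daughter, a second good daughter exists with probability $1-\mu\ell e^{-\mu\ell}/(1-e^{-\mu\ell})$.

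Now let $n\to\infty$, hence $\mu\to0$. Using $1-e^{-\mu\ell}\sim\mu\ell$ and $q_{n-1}/q_n\to1/m$, the conditional law of $L_0$ converges to $\mathbf P(D\in\dif\ell)=\ell\,\Lambda(\dif\ell)/m$; given $L_0=\ell$ the density of $R^n_0$ converges to $1/\ell$ on $(0,\ell)$, so $(A^n_0,R^n_0)$ converges in law to $(UD,(1-U)D)$ with $U$ uniform on $(0,1)$ independent of $D$; the probability of a second good daughter tends to $0$; the bad subtrees converge to $\mathbf P$, since $\|\mathbf P(\cdot\,|\mathcal Z_{n-1}=0)-\mathbf P\|_{TV}\le2\,\mathbf P(\mathcal Z_{n-1}\neq0)\to0$; and the ``extra'' good daughters (carrying still-conditioned subtrees of law $\mathbf P(\cdot\,|E_{n-1})$) disappear. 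Thus the root-level data converges to: a root of lifespan $D$, a rate-$b$ Poisson process of ordinary births on $(0,D)$ each carrying an independent $\mathbf P$-tree, and one distinguished child at age $A_0=UD$. Crucially, conditionally on all of this the subtree rooted at $U^{(n)}|1$ has law exactly $\mathbf P(\cdot\,|E_{n-1})=\mathbf P^{n-1}$ (by the branching property, ``distinguished'' constrains only the \emph{other} daughters beyond forcing her to be good), so under $\mathbf P^n$ the tree splits as an \emph{independent} concatenation of level blocks, the $k$-th block having the law of the root-level data under $\mathbf P^{n-k}$. For each fixed $k$ this block converges, as $n\to\infty$, to the limiting level-structure above; hence for every $K$ the first $K$ blocks converge jointly to $K$ i.i.d.\ copies of it. This yields the claimed i.i.d.\ sequence $(A_k,R_k)_{k\ge0}$ with $(A_0,R_0)\overset{(d)}{=}(UD,(1-U)D)$, and, since the ordinary births along a spine individual of lifespan $T_k=A_k+R_k$ form a rate-$b$ Poisson process on all of $(0,T_k)$, concatenating the blocks is exactly the second assertion: $\mathbb B$ carries a Poisson measure of intensity $b$, with i.i.d.\ $\mathbf P$-trees grafted at its atoms.

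The one genuinely delicate point — and the only place the $x\log x$ condition (subcritical), resp.\ $\int z^2\Lambda(\dif z)<\infty$ (critical), is used beyond supplying $q_{n-1}/q_n\to1/m$ — is upgrading this finite-dimensional convergence to convergence for finite truncations, i.e.\ controlling uniformly in $n$ the part of $\TT$ below a fixed level $\sigma$. One checks that the spine escapes to infinity uniformly in $n$ (the birth times $\alpha(U^{(n)}|k)=\sum_{j<k}A^n_j$ grow because, from the explicit densities above, the $A^n_j$ are stochastically bounded below away from $0$ uniformly in $n$ and $j$), so that only finitely many blocks reach level $\sigma$; and that the finitely many grafted subtrees truncated at $\sigma$ form a uniformly tight family, which is where a first-moment control on $\Lambda$ enters. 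Modulo this, everything is Poisson bookkeeping plus the Yaglom/Kolmogorov asymptotics, and the limiting object is the splitting-tree analogue of Geiger's size-biased construction~\cite{Geiger1996} and of the size-biased Galton--Watson trees of Lyons--Pemantle--Peres~\cite{Lyons1995}.
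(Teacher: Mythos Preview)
Your proof is correct and rests on the same ingredients as the paper's --- the Poisson structure of births in a splitting tree, the branching property, and the ratio asymptotics $q_{n-1}/q_n\to 1/m$ for $q_n=\mathbf P(\mathcal Z_n\neq0)$ --- but packages them differently. The paper computes in one shot the joint density under $\mathbf P^n$ of the lifespans $T^n_i$, offspring numbers $N_i$, and birth ages $\alpha_{ij}$ of the first $p$ spine individuals together with the spine word $u_1\cdots u_p$, obtaining the product
\[
\prod_{i=0}^{p-1}\Bigl(\frac{\Lambda(\dif z_i)}{b}e^{-bz_i}\frac{(bz_i)^{n_i}}{n_i!}\prod_{j}\frac{\dif x_{ij}}{z_i}\Bigr)\cdot\prod_{i=1}^{p}\mathbf P(E_{n-i}^c)^{u_i-1}\cdot\frac{\mathbf P(E_{n-p})}{\mathbf P(E_n)},
\]
and lets $n\to\infty$ to read off the size-biased lifespan, the ``one plus Poisson'' offspring count with a uniformly marked child, and the i.i.d.\ structure across generations simultaneously. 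Your recursive route --- thin the root's daughters into good and bad, identify $U^{(n)}|1$ as the youngest good one, and observe that her subtree has law $\mathbf P^{n-1}$ independently of the rest --- is exactly this product unrolled one factor at a time; the two descriptions of a limiting spine level (size-biased Poisson offspring with a uniform mark, versus one distinguished child at an independent uniform age plus an ordinary rate-$b$ Poisson of unmarked births) are well known to coincide.

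One small correction: the ratio $q_{n-p}/q_n\to m^{-p}$ already follows from $q_{n+1}=1-f(1-q_n)$, $f'(1)=m$ and $q_n\to0$, and needs neither the $x\log x$ condition nor finite variance. So your guess that these moment hypotheses are what drive the finite-truncation upgrade is not quite on target; the paper invokes the full Yaglom and Kolmogorov estimates~(\ref{Yaglom}) and~(\ref{convergencecascritique}) only because it quotes them verbatim, and in fact it does not spell out the passage from $\mathcal F^p$-convergence to convergence for truncations at a fixed chronological level $\sigma$ either. Your sketch of that passage (the spine escapes any level $\sigma$ in finitely many generations uniformly in $n$; the grafted side-trees below $\sigma$ are tight) is the right idea and is more than the paper itself provides.
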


\begin{proof}
First, for $t>0$ we denote by $K_n(t)$ the number of individuals at time $t$
that have alive descendance at generation $n$.
Then, as $n\rightarrow\infty$, $\mathbf P(K_n(t)=1|\mathcal Z_n>0)\longrightarrow1$. Hence, the limiting tree under $\mathbf P^n$ has a unique infinite branch.

We now investigate the law of this limiting spine.
Let $p$ be a natural integer.
We denote by $\mathcal F^p$ the $\sigma$-field generated by the lifespans of all individuals until generation $p-1$ and the numbers and birthdates of their daughters.

Under $\mathbf P^n$, we denote by $N_i$ the number of children of $U^{(n)}|i$ and by
$\alpha_{ij},1\leq j\leq N_i$ her age at their births.
Then, for $n\geq p$,
 for $n_0,\dots,n_{p-1}\in\Nat^*$, for $z_0,\dots,z_{p-1}>0$, for $0<x_{ij}<z_i$ where $0\leq i\leq p-1$, $1\leq j\leq n_i$ and for $u_1u_2\cdots u_p$ any word of length $p$, we have
{\setlength\arraycolsep{2pt}
\begin{eqnarray}
\mathbf P^n(\mathcal E^p)&:=&\mathbf P^n(T_i^n\in \mathrm{d}z_i,N_i=n_i,\alpha_{ij}\in \dif x_{ij},0\leq i \leq p-1,1\leq j\leq n_i\ ;U^{(n)}|p=u_1\cdots u_p)\nonumber\\
&&\!\!\!\!\!\!\!\!\!\!\!\!\!\!\!=\frac{1}{\mathbf P(E_n)}\mathbf E\left[\prod_{i=0}^{p-1}\left(\frac{\Lambda(\mathrm{d}z_i)}{b}e^{-bz_i}\frac{(bz_i)^{n_i}}{n_i!}
\prod_{j=1}^{n_i}\frac{\dif x_{ij}}{z_i}
\prod_{l=1}^{u_{i+1}-1}\1_{\left(E_{n-i-1}(u_1\cdots u_{i}l)\right)^c}\right) \1_{E_{n-p}(u_1\cdots u_p)}\right]\nonumber
\end{eqnarray}}
because by definition of $\mathbb B^n$, all the younger sisters of the marked individuals, that is, all individuals labeled by $u_1\cdots u_{i}l$ for any $0\leq i\leq p-1$ and any $1\leq l<u_{i+1}$, have no alive descendance at generation $n$.

Conditioning with respect to the $\sigma$-field $\mathcal F^p$ and thanks to the branching property, we have
{\setlength\arraycolsep{2pt}
\begin{eqnarray}
\mathbf P^n(\mathcal
E^p)&=&\prod_{i=0}^{p-1}\left(\frac{\Lambda(\mathrm{d}z_i)}{b}e^{-bz_i}\frac{(bz_i)^{n_i}}{n_i!}\prod_{j=1}^{n_i}
\frac{\dif
x_{ij}}{z_i}\right)\prod_{i=1}^p\mathbf P(E_{n-i}^c)^{u_i-1}\frac{\mathbf P(E_{n-p})}{\mathbf P(E_n)}.\nonumber
\end{eqnarray}}
Since we consider subcritical or critical trees, $\mathbf P(E_{n-i}^c)$
goes to 1 as $n\rightarrow\infty$. Moreover, for $m<1$, according to
Lemma \ref{conditionYaglom}, if $\int_{[1,\infty)}z \log(z)\Lambda(\mathrm{d}z)<\infty$, then the $\log$-condition of
(\ref{Yaglom}) is fulfilled and there exists $c>0$ such that
$$\frac{\mathbf P(E_n)}{m^{n}}=\frac{\mathbf P(\mathcal Z_n\neq0)}{m^{n}}\underset{n\rightarrow\infty}\longrightarrow c.$$
Furthermore, in critical case $m=1$, if
$\int_{(0,\infty)}z^2\Lambda(\mathrm{d}z)$ is finite, according to
(\ref{convergencecascritique}), there is $c'>0$ such that
$$\lim_{n\rightarrow\infty}n\mathbf P(\mathcal Z_n\neq0)=c'.$$
In both cases, we obtain by the dominated convergence theorem
{\setlength\arraycolsep{2pt}
\begin{eqnarray}
\lim_{n\rightarrow\infty}\mathbf P^n(\mathcal
E^p)&=&\prod_{i=0}^{p-1}\left(\frac{\Lambda(\mathrm{d}z_i)}{b}e^{-bz_i}\frac{(bz_i)^{n_i}}{n_i!}\prod_{j=1}^{n_i}
\frac{\dif x_{ij}}{z_i}\right)m^{-p}\nonumber\\
&=&\prod_{i=0}^{p-1}\frac{z_i\Lambda(\mathrm{d}z_i)}{m}\cdot
\frac{e^{-bz_i}(bz_i)^{n_i-1}}{(n_i-1)!}\cdot\frac{1}{n_i}\cdot\prod_{j=1}^{n_i}\frac{\dif
x_{ij}}{z_j}.\nonumber
\end{eqnarray}}
Thus, we see that in the limit $n\rightarrow\infty$, the individuals of the spine have independent size-biased
lifelengths (first term of the r.h.s.),
give birth to one plus a Poissonian number of children (second term) whose birth dates are independent
uniform during the life of their mother (fourth term) and one of them is marked uniformly among them
(third term).

Hence, we obtain a limiting spine $\mathbb B$ characterized by a sequence $(A_k+R_k,A_k)$ which are independent r.v. with the same joint distribution as $(D,UD)$.
Moreover, conditional on $T_k=A_k+R_k$, the $k$-th individual of $\mathbb B$ has a Poissonian number (with parameter $T_kb$) of non-marked daughters which are born independently and uniformly on $[0,T_k]$.

Intersecting the previous event $\mathcal E^p$ with events involving trees descending from sisters of the marked individuals and applying the branching property leads by similar computations to the last statement of the theorem.
\end{proof}

We can now define two contour processes of the conditioned splitting tree.
We denote by $\left(X^\uparrow(t),t\geq0\right)$ the contour of the limiting tree as defined previously for typical trees. Since the spine is infinite, only individuals on its left part are visited by this contour process. That is why we call it the \emph{left contour process}.
Notice that it does not characterize the conditioned tree because all information about the right part of the spine can not be recovered from $X^\uparrow$. That is why we need a second contour process $\left(X^\downarrow(t),t\geq0\right)$.

For a non-conditioned (sub)critical tree, it is possible to define a process which visits it in the opposite direction by the time-reversal of the classical contour process $X$ at $T_0$:
$$\bar{X}:=\left(X(T_0-t),0\leq t\leq T_0\right).$$
Then, for the conditioned tree, this reversed contour process is trapped on the right part of the infinite spine: it starts at level 0 and visits the right part of the infinite spine in the opposite direction of the typical contour process. We denote it by $X^\downarrow$.
Roughly speaking, it is the contour process of the right part that we reverse at $T_0=+\infty$.

In \cite{Duquesne2009}, Duquesne obtains an equivalent result. He defines two processes $\overleftarrow{H}$ and $\overrightarrow{H}$ called left and right height processes and which describe the genealogy of a continuous state branching process with immigration. Notice that these height processes are linked to the height process $(H_t,t\geq0)$ that we use in Section \ref{variation_infinie} for L\'evy processes with infinite variation.

A consequence of the previous theorem deals with the joint law of the processes $X^\uparrow$ and $X^\downarrow$.
\begin{cor}
Let $(U_k,D_k)_k$ be a sequence of i.i.d. pairs distributed as $(U,D)$ in Theorem
\ref{conditionnement_splitting_tree}. 
%
We have
$$\left(X^\uparrow,X^\downarrow\right)\overset{(d)}=
\left(\sum_{k\geq0}X_k(t-\varsigma_k)\2{t\geq \varsigma_k},\sum_{k\geq0}\tilde X_k(t-\tilde\varsigma_k)
\2{t\geq \tilde\varsigma_k},t\geq0\right)$$
where
\begin{itemize}
\item $(X_k,\tilde X_k)_k$ is a sequence of i.i.d. pairs of processes,
where $(X_k,\tilde X_k)$ is independent from $(U_j,D_j)$ for $j\neq k$. Moreover, conditionally on $U_k$ and $D_k$, they are independent and $X_k$ is distributed as $X$ started at level $D_k$ and stopped at the hitting time $\eta_k$ of level $U_kD_k$,
 and $\tilde X_k$ is distributed as $X$ started at level $U_kD_k$ and reversed at the hitting time $\tilde\eta_k$ of $0$ by $Y$ that is, $$\tilde X_k\overset{(d)}=X((\tilde\eta_k-\cdot)\vee0),$$
\item $\varsigma_0=\tilde\varsigma_0$, $\varsigma_k=\eta_1+\cdots+\eta_{k-1}$
and $\tilde\varsigma_k=\tilde\eta_1+\cdots+\tilde\eta_{k-1}$.
\end{itemize}
\end{cor}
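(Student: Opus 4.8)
The plan is to decompose both contour processes $X^\uparrow$ and $X^\downarrow$ according to the successive generations of the infinite spine $\mathbb B$ described in Theorem \ref{conditionnement_splitting_tree}, and to identify each generational block with an independent copy of a stopped (respectively reversed) L\'evy process. First I would recall that, by Theorem \ref{conditionnement_splitting_tree}, the limiting conditioned tree is the infinite spine $\mathbb B$ with i.i.d. data $(A_k,R_k)_{k\geq0}\overset{(d)}=(U_kD_k,(1-U_k)D_k)$, onto which i.i.d. typical trees with law $\mathbf P$ are grafted at the atoms of a Poisson measure of intensity $b$ along $\mathbb B$. The left contour process $X^\uparrow$ visits, for each $k$, the individual $U^{(n)}|k$ of the spine \emph{together with} all the subtrees grafted on its left before the birthdate $A_k$ of $U^{(n)}|k+1$; by the graft operation and Proposition \ref{JCCPsplittingtree}, this whole excursion of the contour, started at the death level $D_k$ (the total lifelength $T_k=A_k+R_k$ of the $k$-th spine individual, measured from its birth) and run until the level of the youngest descending daughter $U^{(n)}|k+1$ is reached, has exactly the law of $X$ under $\mathbf P_{D_k}$ stopped at the hitting time $\eta_k$ of level $U_kD_k=A_k$. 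These blocks are concatenated at the successive times $\varsigma_k=\eta_1+\cdots+\eta_{k-1}$, which gives the first coordinate of the claimed identity in law.

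For the second coordinate, I would use the fact, stated in the excerpt, that the reversed contour $\bar X=(X(T_0-t))$ of a typical (sub)critical tree visits the tree in the opposite direction, and that $X^\downarrow$ is obtained by reversing at $T_0=+\infty$ the contour of the right part of the spine. Concretely, for each spine individual $U^{(n)}|k$, the portion of the right part consists of that individual's life segment $(A_k,T_k]$ from its birth up to the birthdate $A_k$ of $U^{(n)}|k+1$ — i.e.\ a segment of length $R_k=(1-U_k)D_k$ — together with the subtrees grafted to its right, and the ``right contour'' started at level $U_kD_k$ and run until it exhausts this block (hitting $0$ at the time I call $\tilde\eta_k$) again has, by Proposition \ref{JCCPsplittingtree} and the branching property, the law of $X$ started at $U_kD_k$. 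Reversing this block is the operation $\tilde X_k\overset{(d)}=X((\tilde\eta_k-\cdot)\vee0)$, and the blocks are now concatenated in the order dictated by the reversal, at times $\tilde\varsigma_k=\tilde\eta_1+\cdots+\tilde\eta_{k-1}$, yielding the second coordinate. The conditional independence of $X_k$ and $\tilde X_k$ given $(U_k,D_k)$ comes from the fact that, conditionally on the spine data, the trees grafted to the left of the $k$-th spine individual and those grafted to its right are built from disjoint families of i.i.d. $\mathbf P$-trees and a Poisson measure split into two independent parts by the birthdate $A_k$; independence across $k$ is the independence of the spine increments together with the branching property. Finally, the indicators $\2{t\geq\varsigma_k}$ and $\2{t\geq\tilde\varsigma_k}$ simply record that the $k$-th block of each contour only starts being traced after the previous $k-1$ blocks are completed, and the telescoping sum representation is just the statement that the contour is the concatenation of its generational blocks.

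The main obstacle, and the step I would treat most carefully, is the precise bookkeeping that a single ``generational block'' of the spine really does split, under the graft construction, into exactly two conditionally independent pieces — a ``left piece'' of lifetime-length $A_k$ carrying the left-grafted subtrees, visited by $X^\uparrow$ and contributing the factor $X$ started at $D_k$ and stopped at level $A_k$; and a ``right piece'' of lifetime-length $R_k$ carrying the right-grafted subtrees, visited (after reversal) by $X^\downarrow$ and contributing $X$ started at level $U_kD_k$ up to its hitting time $\tilde\eta_k$ of $0$. This requires unwinding the recursive definition of the JCCP in Section 3 of \cite{Amaury_contour_splitting_trees} at the marked individual: the contour, when it reaches the birth level $\alpha(U^{(n)}|k+1)$ of the youngest spine daughter, jumps up to that daughter's death level and leaves the current individual; everything visited strictly before that jump (the individual's death segment down to $\alpha(U^{(n)}|k+1)$, and the subtrees grafted on younger daughters, i.e.\ at levels $>\alpha(U^{(n)}|k+1)$, namely the ``left'' ones) is the left block, and everything visited when the contour later resumes this individual's life below $\alpha(U^{(n)}|k+1)$ (the remaining life segment and the subtrees grafted at levels $<\alpha(U^{(n)}|k+1)$, the ``right'' ones) is the right block. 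Once this identification is made at one generation, the branching property and the i.i.d.\ structure of the spine from Theorem \ref{conditionnement_splitting_tree} propagate it to all generations, and the passage to the limit $n\to\infty$ is justified exactly as in the proof of that theorem, since all of the above concerns only finite truncations of $\TT$.
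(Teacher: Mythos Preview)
Your approach is correct and is precisely the argument the paper has in mind; in fact the paper gives \emph{no} proof of this corollary at all --- it is simply announced as ``a consequence of the previous theorem'' and left to the reader. Your write-up makes explicit the two ingredients that this claim rests on: the i.i.d.\ spine data $(U_k,D_k)$ together with the independent Poisson grafting from Theorem~\ref{conditionnement_splitting_tree}, and the identification of the JCCP of each generational block with a piece of the L\'evy process $X$ via Proposition~\ref{JCCPsplittingtree}. The splitting of the $k$-th spine individual's life at age $A_k=U_kD_k$ into the part visited by the forward contour before jumping to generation $k+1$ (giving $X_k$) and the part visited only on the way back (giving $\tilde X_k$ after reversal), with conditional independence coming from Poisson thinning at $A_k$, is exactly the right bookkeeping.

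One slip to fix: in your description of the right block you write that it is the segment $(A_k,T_k]$ of length $R_k$, but that is the \emph{left} block (the one visited first by the JCCP, from the death level $D_k$ down to the birth level $A_k$ of the marked daughter). The right block is the remaining segment $(0,A_k]$ of length $A_k=U_kD_k$, and indeed its forward contour is $X$ started at $U_kD_k$ and run to $0$, which after reversal gives $\tilde X_k$ as stated. Your formulas are consistent with this correct identification; only the verbal description of which segment is which needs to be swapped.
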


\begin{rem}
An open question is to prove that the left contour process $X^\uparrow$ of the conditioned tree (started at $x$) has the same distribution as the conditioned process $X$ under $\p^\star_x$ defined in Section \ref{finitevariation}.

\end{rem}

\section{Infinite variation case}\label{variation_infinie}
\subsection{Definitions and main results}
In this section, we consider the same setting as T. Duquesne and J.F. Le Gall in \cite[Ch 1]{Duquesne2002}. We assume that $X$ is a L\'evy process with no negative jumps, which does not drift to $+\infty$ (so that $X$ hits 0 a.s.) and has paths with infinite variation. Its law is characterized by its Laplace transform
$$\Esp_0\left[e^{-\lambda X_t}\right]=e^{t\psi(\lambda)},\quad \lambda>0$$
specified by the L\'evy-Khintchine formula
$$\psi(\lambda)=\alpha_0\lambda+\beta\lambda^2+\intpo\Lambda(\dif
r)\left(e^{-\lambda r}-1+\lambda r\1_{\{r<1\}}\right)$$
where $\alpha_0\in\R$, $\beta\geq0$ and $\Lambda$ is a positive
measure on $(0,+\infty)$ such that $\intpos\Lambda(\dif r)(1\wedge
r^2)$ is finite.
Actually, since $X$ does not drift to $+\infty$, it has finite first
moment and then $\intpo\Lambda(\dif r)(r\wedge r^2)<\infty$. We can
rewrite $\psi$ as
$$\psi(\lambda)=\alpha\lambda+\beta\lambda^2+\intpo\Lambda(\dif r)(e^{-\lambda r}-1+\lambda r).$$
Then, $X$ does not drift to $+\infty$ iff $\psi'(0)=\alpha\geq0$. More precisely, if $\alpha=0$, $X$ is recurrent while if $\alpha<0$, it drifts to $-\infty$.

According to Corollary VII.5 in \cite{Levy_processes}, $X$ has infinite variation iff $\lim_{\lambda\rightarrow\infty}\psi(\lambda)/\lambda=+\infty$, which is satisfied iff
\begin{equation}\beta>0 \textrm{ or }\int_{(0,1)}r\Lambda(\dif r)=\infty.\label{infinitevariation}\end{equation}
In the following, we always suppose that (\ref{absorptionCB}) holds
which implies (\ref{infinitevariation}).

We denote by $I$ the past infimum process of $X$
$$I_t=\inf_{0\leq s\leq t}X_s.$$
The process $X-I$ is a strong Markov process and because $X$ has infinite variation,
0 is regular for itself for this process.
Thus, we can consider its excursion measure $N$ away from 0 and normalized so that $-I$ is the associated local time.

The point 0 is also regular for itself for the Markov process $X-S$. We denote by $L$ a local time at 0 for this process and by $N^*$ the associated excursion measure normalized so that for every Borel subset $B$ of $(-\infty,0)$,
\begin{equation}\label{normalisationtempslocal}
N^*\left(\int_0^\sigma\1_{B}(X_s)\dif s\right)=m(B)
\end{equation}
where $m$ denotes the Lebesgue measure on $\R$ (see Lemma 1.1.1 in \cite{Duquesne2002} for more details).
This also fixes the normalization of the local time $L$.

We now define the height process $H$ associated with $X$ which is the counterpart of the height process
in the finite variation case. As in Section \ref{finitevariation}, we denote by $X^{(t)}$ the time-reversed process associated with $X$ at time $t$
and by $S^{(t)}$ its past supremum.
\begin{defi}
The height process $H=(H_t,t\geq0)$ associated with the L\'evy process $X$ is the local time at level 0 at time $t$ of the process $S^{(t)}-X^{(t)}$
with the normalization fixed in \eqref{normalisationtempslocal} and $H_0:=0$.
\end{defi}
According to Theorem 1.4.3 in \cite{Duquesne2002}, the condition (\ref{absorptionCB}) implies
that $H$ has a.s. continuous sample paths. There is an alternative definition of $H_t$: there exists a positive sequence $(\eps_k,k\geq0)$ such that a.s., for all $t\geq0$,
$$H_t=\lim_{k\rightarrow\infty}\frac{1}{\eps_k}\int_0^t\2{X_t<I_s^t+\eps_k}\dif s<\infty$$
where for $0\leq s\leq t$, we set by $$I_s^t:=\inf_{s\leq r\leq t}X_r$$
the future infimum process of $X$ at time $t$. Since this process is non-decreasing, one can define its right-continuous inverse $I_t^{-1}$
$$I_t^{-1}(u):=\inf\{s\geq0;I_s^t>u\},\quad 0\leq u\leq X_t.$$
Observe that for $0\leq u\leq I_t$, $I_t^{-1}(u)=0$.

We now define the equivalent of the sequence $(\rho_i^t,i\leq H_t)$ defined in Section \ref{finitevariation}.
Here, we consider the measure $\tilde \rho_t$ defined in \cite[p.25]{Duquesne2002} and called exploration process to which we add a weight $I_t$ at $0$.

\begin{defi}
For $t\geq0$, the random positive measure $\rho_t$ is defined by
$$\langle\rho_t,f\rangle=\langle\tilde\rho_t,f\rangle+I_tf(0)=\int_{[0,t]}\mathrm{d}_sI_s^tf(H_s)+I_tf(0)$$
where $\mathrm{d}_s I_s^t$ is the Stieltjes measure associated with
the non-decreasing function $s\longmapsto I_s^t.$
\end{defi}
Notice that this random measure has support $[0,H_t]$, that $\rho_{t}(\{0\})=I_t$ and that its total mass is $\langle\rho_t,{\1}\rangle=I_t+(X_t-I_t)=X_t$.

As in Section \ref{finitevariation}, we condition the L\'evy process $X$ to reach large heights before hitting $0$ and the following theorem is the counterpart of Theorem \ref{conditionnment hauteur}.

\begin{thm}\label{conditionnment hauteur2}
Recall that $\alpha=\psi'(0)$. Then, if \eqref{absorptionCB} holds, for $t\geq0$ and $\Theta\in\mathcal F_t$,
$$\lim_{a\rightarrow\infty}\px\left(\Theta,t<T_0\left|\sup_{s\leq T_0}H_s\geq a\right.\right)
=\frac{1}{x}\ex\left[\int_0^{H_t}\rho_t(\mathrm{d}z)e^{\alpha
z};t\leq T_0,\Theta\right].$$
Moreover, if $$M_t:=\int_0^{H_t}\rho_t(\mathrm{d}z)e^{\alpha z},$$
$(M_{t\wedge T_0},t\geq0)$ is a martingale under $\px$.
In particular, similarly to the finite variation case, if $\alpha=0$ (recurrent case), then $M_t=X_t.$
\end{thm}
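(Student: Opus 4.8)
The plan is to re-run the argument of Theorem \ref{conditionnment hauteur}, replacing the embedded Bienaymé–Galton–Watson process by the $\psi$-continuous-state branching process (CSBP) governing $H$ through the Ray–Knight theorem, and the Yaglom/Kolmogorov asymptotics by the analytic behaviour of the ``survival function'' $v(a):=N(\sup_{s\le\sigma}H_s\ge a)$, $a>0$. First I would record the facts about $v$: under $N$ the total local time of $H$ at level $a$ defines, as $a$ varies, a $\psi$-CSBP under its canonical excursion measure, so that hypothesis (\ref{absorptionCB}) is exactly the statement that $v(a)<\infty$ for all $a>0$; moreover $v$ is continuous and strictly decreasing, solves $\int_{v(a)}^{\infty}\psi(\lambda)^{-1}\,\dif\lambda=a$, equivalently $v'=-\psi\circ v$ with $v(0{+})=+\infty$, and $v(a)\downarrow0$ because $\psi>0$ on $(0,\infty)$ when $\alpha\ge0$. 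Since $\lambda\mapsto\psi(\lambda)/\lambda$ is nondecreasing with limit $\alpha$ at $0$, one gets $v(a-z)/v(a)\to e^{\alpha z}$ as $a\to\infty$ for each fixed $z\ge0$, together with the elementary one-sided bound $v(a-z)/v(a)\le e^{z\,\phi(a-z)}$, where $\phi(r):=\psi(v(r))/v(r)$ is decreasing; these are the inputs for the convergence and domination steps.

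Two probabilistic ingredients are then needed. (i) Under $\px$ the excursions of $X-I$ away from $0$ on $[0,T_0]$ form a Poisson point process on $[0,x]\times E$ ($E$ the excursion space) with intensity $\dif\ell\otimes N$ — the local time $x-I$ runs over $[0,x]$ — and for $s$ strictly inside an excursion interval $[g,d]$ one has that $H_s$ equals the height process of that excursion at time $s-g$, because $I_u^s\le I_g<X_s$ for $u<g$. Hence $\{\sup_{s\le T_0}H_s\ge a\}=\{\tau_a<T_0\}$ is the event that some excursion reaches height $\ge a$, so $\px(\tau_a<T_0)=1-e^{-x\,v(a)}\sim x\,v(a)$ as $a\to\infty$; in particular $\sup_{s\le T_0}H_s<\infty$ a.s. and $\tau_a\uparrow\infty$ a.s. (ii) The special Markov property of the exploration process (\cite{Duquesne2002}): conditionally on $\mathcal F_t$ and on $\{t<T_0\}$, the subtrees grafted above the current profile $\rho_t$ form, level by level, an independent Poisson family, which yields
$$\px(\tau_a<T_0\mid\mathcal F_t)\,\2{t<T_0}=\Big(1-\exp\Big(-\textstyle\int_0^{H_t}\rho_t(\dif z)\,v(a-z)\Big)\Big)\2{t<T_0}.$$
This is the continuum analogue of $1-\prod_{i=0}^{H_t}(1-\p_{\rho_i^t}(\mathcal Z_{a-i}\ne0))$ from Section \ref{preuve}; taking $t=0$ (so $\rho_0=x\delta_0$) recovers the formula for $\px(\tau_a<T_0)$.

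With these in hand I would decompose, as in (\ref{decompositionproba}),
$$\px\big(\Theta,\,t<T_0\,\big|\,\tau_a<T_0\big)=\frac{\px(\Theta,\tau_a\le t<T_0)+\px(\Theta,t<\tau_a<T_0)}{\px(\tau_a<T_0)},$$
and handle the pieces as in Section \ref{preuve}. By the Markov property at $t$, $\px(\Theta,t<\tau_a<T_0)=\ex[\1_\Theta\2{t<T_0}\2{t<\tau_a}\,\px(\tau_a<T_0\mid\mathcal F_t)]$, and from (ii), the behaviour of $v$, and $1-e^{-y}\le y$,
$$\frac{1}{v(a)}\px(\tau_a<T_0\mid\mathcal F_t)\,\2{t<T_0}\ \le\ \int_0^{H_t}\rho_t(\dif z)\,\frac{v(a-z)}{v(a)}\ \xrightarrow[a\to\infty]{}\ \int_0^{H_t}\rho_t(\dif z)\,e^{\alpha z}=M_{t\wedge T_0}\as$$
Together with $\2{t<\tau_a}\to1$ and a dominated-convergence argument this gives $v(a)^{-1}\px(\Theta,t<\tau_a<T_0)\to\ex[M_{t\wedge T_0}\1_\Theta]$; the first term is bounded by $\px(\tau_a\le t)=o(v(a))$; and the denominator is $\sim x\,v(a)$. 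Dividing yields the announced limit.

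I expect the bulk of the work to lie in two places, the analogues of the estimates $\px(\tau_a\le t)\le\p(N_t\ge a)\le(bt)^a/a!$ and (\ref{domination}) used in Section \ref{preuve}. The first, $\px(\tau_a\le t)=o(v(a))$, is delicate because here $H$ is continuous but may climb at unbounded speed; the point to exploit is that under $N(\cdot\mid\sup H\ge a)$ the occupation of $H$ at levels below $a$, hence the excursion duration, grows linearly in $a$, so a ``tall'' excursion is extremely unlikely to have been explored by the fixed time $t$. The second — the domination justifying the interchange above and, equivalently, the bound $\ex[M_{t\wedge T_0}]<\infty$ — is obtained as in Section \ref{preuve} by time-reversing $X$ at $t$ and estimating exponential moments attached to the records of $S^{(t)}-X^{(t)}$. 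Finally, for the martingale property: applying the limit formula at time $t+s$ with $\Theta\in\mathcal F_t$ and using $\{\Theta,t+s<T_0\}\subset\{\Theta,t<T_0\}$ shows $(M_{t\wedge T_0})_{t\ge0}$ is a $\px$-supermartingale, while $\px(T_0\le t\mid\tau_a<T_0)\le\px(\tau_a\le t)/\px(\tau_a<T_0)\to0$ forces $\ex[M_{t\wedge T_0}]=x=\ex[M_0]$ for all $t$; a supermartingale with constant mean is a martingale. When $\alpha=0$ one has $M_t=\langle\rho_t,\1\rangle=X_t$, and the statement reduces to the martingale property of $X_{\cdot\wedge T_0}$, valid since $\psi'(0)=0$.
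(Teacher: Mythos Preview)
Your overall architecture---the three-term decomposition \eqref{decomposition_en_3_probas}, the excursion computation $\px(\tau_a<T_0)\sim x\,v(a)$, the formula $\px(\tau_a<T_0\mid\mathcal F_t)=1-\exp(-\langle\rho_t,v(a-\cdot)\rangle)$ on $\{t<\tau_a\}$, and the asymptotics $v(a-z)/v(a)\to e^{\alpha z}$---matches the paper exactly. The difficulty is in the order in which you assemble the two ingredients (martingale property vs.\ limit formula) and in the passage to the limit for the main term.

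You propose to pass to the limit by dominated convergence and to deduce the martingale property afterwards (supermartingale with constant mean). But the dominating function you implicitly invoke is $M_{t\wedge T_0}$, and this cannot work: since $\psi(\lambda)/\lambda\ge\psi'(0)=\alpha$ for all $\lambda>0$ by convexity, one has $v(a-z)/v(a)\ge e^{\alpha z}$ for every $a$ and every $z\in[0,a)$, with equality only in the limit. Thus $\langle\rho_t,v(a-\cdot)/v(a)\rangle$ decreases to $M_t$ \emph{from above}; $M_t$ is a lower bound, not a majorant. Your one-sided estimate $v(a-z)/v(a)\le e^{z\phi(a-z)}$ is correct but does not give an $a$-free bound, because $\phi(a-z)=\psi(v(a-z))/v(a-z)$ depends on $a-H_t(\omega)$ and blows up as that difference shrinks. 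The appeal to ``as in Section~\ref{preuve}'' does not transfer either: there the domination \eqref{domination} rested on $H_t\le N_t$ with $N_t$ Poisson, a device unavailable in infinite variation. Since your route to the martingale property passes through the limit formula, the argument is circular at this point.

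The paper breaks the circle by reversing the order. It first proves directly that $(M_{t\wedge T_0})_{t\ge0}$ is a martingale (Step~1), not via your supermartingale trick but by exhibiting $M_{t\wedge T_0}=\langle\nu_{t\wedge\theta},\ea\rangle$ as a double limit ($a\downarrow0$, then $K\uparrow\infty$) of the Abraham--Delmas martingales $M^{ah_K}$ of \eqref{famille_martingale}; the uniform integrability needed to pass to the limit is supplied by Lemmas~\ref{integrabilit�_rho_f} and~\ref{integrabilite_exp_Hs}. With $\ex[M_{t\wedge T_0}]=x$ already in hand, Step~2 avoids domination altogether: Fatou gives $\liminf_a v(a)^{-1}\px(\Theta,t<\tau_a<T_0)\ge\ex[\1_\Theta M_{t\wedge T_0}]$, and applying the same to $\Theta^c$, combined with $v(a)^{-1}\px(t<\tau_a<T_0)\to x$ and the martingale identity, yields the matching $\limsup$ (this is \eqref{proba3}).

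For the remaining estimate $\px(\tau_a\le t)=o(v(a))$ the paper's argument is also more concrete than your sketch: write $\tau_a=g_a+\eps_a$ with $g_a$ the left endpoint of the first excursion of $X-I$ reaching height $a$; splitting the excursion point process according to $\{h^*>a\}$ versus $\{h^*<a\}$ makes $g_a$ and $\eps_a$ independent, gives $\Esp[e^{-\lambda g_a}]=v(a)/(v(a)+\varphi^a(\lambda))$ with $\varphi^a\to\varphi=\psi^{-1}$, and a Markov inequality then yields $\px(\tau_a\le t)/v(a)\le e^{\lambda t}\px(\eps_a\le t)/(v(a)+\varphi^a(\lambda))\to0$.
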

\begin{rem}
In the particular case $\Lambda\equiv 0$
(so we assume $\beta>0$),
we have $\psi(\lambda)=\alpha\lambda+\beta\lambda^2$
and $X$ is a Brownian motion with drift $-\alpha$ and variance $2\beta$.
Then, at it is noticed in \cite{Limic2001}, the local time process
at 0 for $S-X$ is $S/\beta$. Also, for $t\geq0$,
$H_t=(X_t-I_t)/\beta$ and $\tilde\rho_t/\beta$ is the Lebesgue measure on $[0,H_t]$.
Finally, in that particular case, if $\alpha>0$, the process
$$\left(I_{t\wedge T_0}+\frac{\beta}{\alpha}\left(e^{\frac{\alpha}{\beta}(X_t-I_t)}-1\right)\2{t\leq T_0},t\geq0\right)$$
is a martingale. In fact, this process belongs to a larger class of martingales called Kennedy martingales \cite{Kennedy1976} and studied in \cite{Azema1979}.
\end{rem}

Before proving Theorem \ref{conditionnment hauteur2}, we give two technical lemmas concerning integrability of the exploration process and the height process.
Recall that $\tilde\rho$ is the exploration measure process defined in \cite{Duquesne2002}
\begin{equation}\label{defi_rho_tilde}\langle\tilde\rt,f\rangle=\int_{[0,t]}\mathrm{d}_sI_s^tf(H_s),\quad t\geq 0.$$
We denote by $\ea$ the mapping
$$\ea:x\longmapsto e^{\alpha x}.\end{equation}

\begin{lem}\label{integrabilité_rho_f}
For $t>0$,
$$\Esp\left[\langle\tilde\rt,\ea\rangle\right]=\Esp\left[\int_0^{H_t}\tilde\rt(\mathrm{d}z)e^{\alpha z}\right]<\infty.$$
\end{lem}

\begin{proof}
First, for $0\leq s\leq t$, we have $$H_s\leq H_t\quad
\mathrm{d}_sI_s^t \textrm{ a.e.}$$ because on $\{\Delta
I_s^t>0\}=\{\Delta S^{(t)}_s>0\}$,
$H_t=L^{(t)}_t=L_s^{(t)}+L_s^{(s)}\geq L_s^{(s)}=H_s$ where for
$0\leq u\leq v$, $L_u^{(v)}$ denotes the local time at level 0 at
time $u$ of the reverse process $S^{(v)}-X^{(v)}$. Then,
{\setlength\arraycolsep{2pt}
\begin{eqnarray}
\Esp\left[\int_0^{H_t}\tilde\rt(\mathrm{d}z)e^{\alpha
z}\right]&=&\Esp\left[\int_0^t\mathrm{d}_sI_s^te^{\alpha H_s}\right]
\leq\Esp\left[\int_0^t\mathrm{d}_sI_s^te^{\alpha H_t}\right]\nonumber\\
&\leq&\Esp\left[(X_t-I_t)e^{\alpha H_t}\right]
=\Esp[S_t^{(t)}e^{\alpha H_t}].\nonumber
\end{eqnarray}}
By time-reversion at $t$, we have $\Esp\left[S_t^{(t)}e^{\alpha
H_t}\right]=\Esp[S_te^{\alpha L_t}]$ where $(L_t,t\geq0)$ is a local
time at $0$ for $S-X$.
First, if $\alpha=0$, $\Esp[S_te^{\alpha L_t}]=\Esp[S_t]$ is finite according to \cite[Thm VII.4]{Levy_processes}.
Moreover, if $\alpha>0$,
\begin{equation}\label{egalite_esperance_integrale}
\Esp[S_te^{\alpha L_t}]= \int_0^\infty \dif
a\Esp\left[S_t\2{L_t\geq\ln a/\alpha}\right]=\Esp[S_t]+\int_1^\infty
\dif a\Esp\left[S_t\2{L_t\geq\ln a/\alpha}\right].\end{equation}
Since $\Esp[S_t]$ is finite, in order to prove the lemma, it is
sufficient to show that $$a\longmapsto\Esp\left[S_t\2{L_t\geq\ln
a/\alpha}\right]$$ is integrable on $[1,+\infty)$.
For $b>0$, if $L^{-1}(\cdot):=\inf\{s\geq0;L(s)>\cdot\}$ denotes the right-continuous inverse of $L$,
\begin{equation}\label{equ}
\Esp\left[S_t\2{L_t\geq b}\right]=\Esp\left[S_{L^{-1}(b)}\2{L^{-1}(b)\leq t}\right]+\Esp\left[\left(S_t-S_{L^{-1}(b)}\right)\2{L^{-1}(b)\leq t}\right]
\end{equation}

On the one hand, if we denote by $\varphi$ the bijective inverse mapping of $\psi$, the process $\left(\left(L^{-1}(b),S_{L^{-1}(b)}\right),b<L(\infty)\right)$ (called ascending ladder process)
is a bivariate L\'evy process killed at rate $\psi'(0)=\alpha$ and whose bivariate Laplace exponent $\kappa$ is
\begin{equation}\label{kappa}
\kappa(\mu,\lambda):=c\frac{\mu-\psi(\lambda)}{\varphi(\mu)-\lambda}
\end{equation}
in the sense
$$\Esp\left[e^{-\mu L^{-1}(b)-\lambda S_{L^{-1}(b)}}\right]=e^{-b\kappa(\mu,\lambda)}.$$
Moreover, $c$ is a constant depending on the normalization of the local time $L$ (see Section VI.1 and Theorem VII.4 in \cite{Levy_processes}). In fact, as it is proved in the proof of Lemma 1.1.2 in \cite{Duquesne2002}, with the normalization defined by (\ref{normalisationtempslocal}), we have $c=1$.
Then, for $\mu>0$, using Markov inequality and (\ref{kappa}), the first term of the r.h.s. of (\ref{equ}) satisfies
{\setlength\arraycolsep{2pt}
\begin{eqnarray}
\Esp\left[S_{L^{-1}(b)}\2{L^{-1}(b)\leq t}\right]&=&\Esp\left[S_{L^{-1}(b)}\2{e^{-\mu L^{-1}(b)}\geq e^{-\mu t}}\right]\leq e^{\mu t}\Esp\left[S_{L^{-1}(b)}e^{-\mu L^{-1}(b)}\right]\nonumber\\
&\leq&be^{\mu t}\frac{\partial}{\partial \lambda}\kappa(\mu,0)e^{-b\kappa(\mu,0)}=
be^{\mu t}\frac{\mu-\alpha\varphi(\mu)}{\varphi(\mu)^2}e^{-b\mu/\varphi(\mu)}.\label{ineg1}\end{eqnarray}}

On the other hand, by the strong Markov property applied at the stopping time $L^{-1}(b)$, the second term of r.h.s. of (\ref{equ}) is
$$\Esp\left[\left(S_t-S_{L^{-1}(b)}\right)\2{L^{-1}(b)\leq t}\right]=\Esp\left[S'_{t-L^{-1}(b)}\2{L^{-1}(b)\leq t}\right]$$
where $S'$ has same distribution as process $S$ and is independent from $L^{-1}(b)$. Then,
$$\Esp\left[\left(S_t-S_{L^{-1}(b)}\right)\1_{\{L^{-1}(b)\leq t\}}\right]\leq \Esp[S'_t]\cdot\p(L^{-1}(b)\leq t).$$
According to \cite[Thm VII.4]{Levy_processes}, $\Esp[S'_t]$ is finite and by similar computations as before,
for $\mu>0$,
\begin{equation}\label{ineg2}
\p(L^{-1}(b)\leq t)\leq e^{\mu t}e^{-b\mu/\varphi(\mu)}.
\end{equation}

Then, using (\ref{ineg1}), (\ref{ineg2}) and choosing $b=\ln a/\alpha$, we have
$$\Esp\left[S_t\1_{\{L_t\geq\ln a/\alpha\}}\right]\leq C(\mu,t)\left(\frac{\ln a}{\alpha}+1\right)a^{-\frac{\mu}{\varphi(\mu)\alpha}}$$
where $C(\mu,t)$ is a constant only depending on $\mu$ and $t$. Since $\varphi=\psi^{-1}$ is concave, $$\frac{\varphi(\mu)}{\mu}\leq \varphi'(0)=\alpha^{-1}$$
and since by hypothesis $\psi(\lambda)/\lambda\rightarrow\infty$, for $\mu$ large enough, $\mu/(\varphi(\mu)\alpha)>1$.
Hence, the application $a\longmapsto\Esp\left[S_t\1_{\{L_t\geq\ln
a/\alpha\}}\right]$ is integrable on $[1,+\infty)$ and the proof is complete.
\end{proof}

\begin{lem}\label{integrabilite_exp_Hs}
For any $\xi>0$ and $t\geq0$,
$$\int_0^t\Esp\left[ e^{\xi H_s}\right]\dif s<\infty.$$
\end{lem}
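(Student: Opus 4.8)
The plan is to bound $\Esp[e^{\xi H_s}]$ by a quantity that is manifestly integrable over $[0,t]$ by exploiting the time-reversal identity $H_s \overset{(d)}{=} L_s$, where $L$ is the local time at $0$ of $S-X$ (equivalently, $H_s$ has the same law as the local time accumulated by the reflected process up to time $s$). Concretely, $\Esp[e^{\xi H_s}] = \Esp[e^{\xi L_s}]$, and since $L_s \geq u$ iff $L^{-1}(u) \leq s$, we can write
\begin{equation*}
\Esp\left[e^{\xi L_s}\right] = 1 + \xi\int_0^\infty e^{\xi u}\,\p\left(L_s \geq u\right)\dif u = 1 + \xi\int_0^\infty e^{\xi u}\,\p\left(L^{-1}(u)\leq s\right)\dif u.
\end{equation*}
Then I would reuse the estimate \eqref{ineg2} obtained in the proof of Lemma \ref{integrabilit�_rho_f}: for any $\mu>0$, $\p(L^{-1}(u)\leq s)\leq e^{\mu s}e^{-u\mu/\varphi(\mu)}$, where $\varphi=\psi^{-1}$.

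**Next**, the point is that the exponential decay rate $\mu/\varphi(\mu)$ can be made larger than any fixed $\xi$: since $\psi(\lambda)/\lambda \to \infty$ as $\lambda\to\infty$ (a consequence of infinite variation, and in particular of hypothesis \eqref{absorptionCB}), we have $\varphi(\mu)/\mu \to 0$ as $\mu\to\infty$, hence $\mu/\varphi(\mu)\to\infty$. So choose and fix $\mu = \mu(\xi)$ large enough that $\mu/\varphi(\mu) > \xi$. Plugging the bound into the integral above gives, for all $s\leq t$,
\begin{equation*}
\Esp\left[e^{\xi H_s}\right] \leq 1 + \xi e^{\mu s}\int_0^\infty e^{-u(\mu/\varphi(\mu) - \xi)}\dif u = 1 + \frac{\xi e^{\mu s}}{\mu/\varphi(\mu) - \xi} \leq 1 + \frac{\xi e^{\mu t}}{\mu/\varphi(\mu) - \xi},
\end{equation*}
which is a finite constant depending only on $\xi$ and $t$. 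Integrating this uniform bound over $s\in[0,t]$ immediately yields $\int_0^t \Esp[e^{\xi H_s}]\dif s < \infty$.

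**The main subtlety**, and the step I would take most care with, is the justification of the identity $H_s \overset{(d)}{=} L_s$ together with the representation $\{L_s \geq u\} = \{L^{-1}(u)\leq s\}$: by definition $H_s$ is the local time at level $0$, at time $s$, of $S^{(s)} - X^{(s)}$, and the time-reversal property of L\'evy processes (used repeatedly already, e.g. in the proof of Lemma \ref{integrabilit�_rho_f} where $\Esp[S_s^{(s)}e^{\alpha H_s}] = \Esp[S_s e^{\alpha L_s}]$) gives that $(S^{(s)}-X^{(s)})_{r\leq s}$ has the same law as $(S-X)_{r\leq s}$, so $H_s$ has the same law as the local time $L_s$ of $S-X$ at time $s$. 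The relation between $L_s$ and its right-continuous inverse $L^{-1}$ is then standard. Everything else is a routine reuse of the ladder-process computation \eqref{ineg2} from the previous lemma, so no genuinely new difficulty arises here; the content is entirely in recognizing that the decay exponent $\mu/\varphi(\mu)$ beats the growth rate $\xi$ once $\mu$ is chosen large, which is exactly where \eqref{absorptionCB} (via $\psi(\lambda)/\lambda\to\infty$) is used.
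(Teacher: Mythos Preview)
Your proof is correct and essentially identical to the paper's: both use the time-reversal identity $H_s\overset{(d)}{=}L_s$, express $\Esp[e^{\xi L_s}]$ via the tail $\p(L_s\geq u)=\p(L^{-1}(u)\leq s)$, bound the latter by \eqref{ineg2}, and conclude by choosing $\mu$ large enough that $\mu/\varphi(\mu)>\xi$. The only difference is a change of variable---the paper writes the tail integral in the variable $a=e^{\xi u}$, obtaining $\Esp[e^{\xi H_s}]\leq 1+e^{\mu s}\int_1^\infty a^{-\mu/(\varphi(\mu)\xi)}\dif a$, which is your bound after the substitution.
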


\begin{proof}
By time reversibility at time $s$, $\Esp\left[ e^{\xi H_s}\right]=\Esp\left[ e^{\xi L_s}\right]$.
Using (\ref{ineg2}) and similar techniques as in the previous proof, we have for all $\mu>0$
$$\Esp\left[ e^{\xi H_s}\right]\leq e^{\mu s}\int_1^\infty a^{-\mu/(\varphi(\mu)\xi)}\dif a+1.$$
Then, $\int_0^t\Esp\left[ e^{\xi H_s}\right]\dif s$ is finite for
$\mu$ large enough.
\end{proof}

\begin{proof}[Proof of Theorem \ref{conditionnment hauteur2}]
\quad\\
\emph{Step 1:}
We begin the proof with showing that $(M_{t\wedge T_0},t\geq0)$ is a martingale.
In the case $\alpha=0$, since the measure $\rho_t$ has mass $X_t$, we have $M=X$ and it is known that $(X_t,t\geq0)$ is a martingale in this recuurent case.

We now suppose that $\alpha>0$.
For $t\geq0$, we set $$\nu_t:=(I_t+x)\2{I_t>-x}\delta_0+\tilde\rho_t$$ and $$\theta:=\inf\{t\geq 0;\nu_t=0\}.$$
In fact, $\nu$ is the exploration process starting at $x\delta_0$ defined in \cite{Duquesne2002}.
If we set $\tilde{\mathcal G}_t:=\sigma(\tilde \rho_s,0\leq s\leq
t)$ and $\mathcal H_t:=\sigma(\nu_s,0\leq s\leq t)$, we have the
inclusions $\tilde{\mathcal G}_t\subset\mathcal H_t\subset\mathcal
F_t$. The second one is trivial and the first one holds because
$\nu_t$ is the sum of the measure $\tilde\rho_t$ (which satisfies
$\tilde\rho_t(\{0\})=0$) and a measure whose support is $\{0\}$.
Moreover, $\tilde{\mathcal G}_t=\mathcal F_t$ because
$\langle\tilde\rho_t,\1\rangle=X_t-I_t$ and because $-I_t$ is the
local time at time $t$ at level 0 of the process $\tilde\rho_t$.
Hence, the three filtrations are equal and it is sufficient to prove
that $M_{\cdot\wedge T_0}$ is a $\mathcal H$-martingale.

 According to Corollary 2.5 and Remark 2.7 in \cite{Abraham_Delmas_2007}, if $f$ is a bounded non-negative function on $[0,\infty)$
of class $C^1$ such that its first derivative $f'$ is bounded and such that $f$ has a finite limit at $+\infty$, then, under $\p_0$,
the process $\left(M^f_t,t\geq0\right)$ defined by
\begin{equation}\label{famille_martingale}
M^f_t:=e^{-\langle\nu_{t\wedge\theta},f\rangle}+\int_0^{t\wedge\theta}
e^{-\langle\nu_s,f\rangle}\left(f'(H_s)-\psi(f(H_s))\right)\dif s
\end{equation}
is a martingale with respect to the filtration $\mathcal H$. In
fact, in \cite{Abraham_Delmas_2007}, the authors only proved this
result in the case $\beta=0$ and $\int_{(0,1]}\Lambda(\dif
r)r=\infty$. However, their proof can easily be adapted in the
general setting using results of \cite[Chap. 3]{Duquesne2002}.

Recall that $\ea:z\mapsto e^{\alpha z}$ and let $(h_K)_{K\geq1}$ be a sequence of functions of class $C^1$
such that for all $K\geq1$, $0\leq h_K\leq K\wedge \ea$, $0\leq h'_K\leq \ea'$, $h'_K$ is bounded
and $(h_K)_K$ (resp. $(h'_K)_K$) is an increasing sequence which converges pointwise to $\ea$ (resp. $\ea'$).

Then, for $a>0,K\geq1$, applying (\ref{famille_martingale}) with $f=ah_K$,
$$M_t^{a,K}:=\frac{1-e^{-a\langle\nu_{t\wedge\theta},h_K\rangle}}{a}+\int_0^{t\wedge\theta} e^{-a\langle\nu_s,h_K\rangle}
\left(\frac{\psi(ah_K(H_s))}{a}-h_K'(H_s)\right)\dif s$$ defines a
martingale.
We first have $$M_t^{a,K}\underset{a\rightarrow0}\longrightarrow
M_t^K:=\langle\nu_{t\wedge\theta},h_K\rangle+\int_0^{t\wedge\theta}\left(\psi'(0)
h_K(H_s)-h_K'(H_s)\right)\dif s\as$$
By Beppo Levi's theorem, since $(h_K)_K$ and $(h'_K)_K$ are non-negative and non-decreasing, we have with probability 1
{\setlength\arraycolsep{2pt}
\begin{eqnarray*}
M_t^\infty&:=&\lim_{K\rightarrow\infty}M_t^K=\langle\nu_{t\wedge\theta},\ea\rangle+\int_0^{t\wedge\theta}
\left(\alpha e^{\alpha H_s}-\alpha e^{\alpha H_s}\right)\dif s\\
&=&\langle\nu_{t\wedge\theta},\ea\rangle.
\end{eqnarray*}}
 We want to show that $(M_t^\infty)_t$, that is the limit as $a\rightarrow0$ and then as $K\rightarrow\infty$ of the martingales $M^{a,K}$, is still a martingale.
To do this, we use the dominated convergence theorem and we have to find an upper bound for $M_t^{a,K}$ independent from $a$ and $K$.
We have
$$|M_t^{a,K}|\leq \langle\nu_{t},\ea\rangle+\int_0^t\alpha e^{\alpha H_s}\dif s+\int_0^t\frac{\psi(ah_K(H_s))}{ah_K(H_s)}h_K(H_s)\dif s.$$
Moreover, there exists $C_1,C_2>0$ such that $\frac{\psi(\lambda)}{\lambda}\leq C_1+C_2\lambda$.
Hence,
{\setlength\arraycolsep{2pt}
\begin{eqnarray}
|M_t^{a,K}|&\leq& (I_t+x)\2{I_t>-x}+\langle\tilde\rho_{t},\ea\rangle+\int_0^t\alpha e^{\alpha H_s}\dif s+\int_0^t\left(C_1e^{\alpha H_s}+C_2a\left(e^{\alpha H_s}-1\right)^2\right)\dif s\nonumber\\
&\leq&
x+\langle\tilde\rho_{t},\ea\rangle+\int_0^t\left(AC_2+(\alpha+C_1)e^{\alpha
H_s}+AC_2e^{2\alpha H_s}\right)\dif s\label{equ2}
\end{eqnarray}}
if $a$ belongs to the compact set $[0,A]$.

Thus, according to Lemmas \ref{integrabilité_rho_f} and \ref{integrabilite_exp_Hs}, the r.h.s. of (\ref{equ2}) is integrable. Then, applying the dominated convergence theorem, we have proved that $\left(\langle\nu_{t\wedge\theta},\ea\rangle,t\geq0\right)$ is a martingale under $\p_0$ with respect to $\mathcal H$.

Furthermore,
$$\langle\nu_{t\wedge\theta},\ea\rangle=\Big((I_t+x)\2{I_t>-x}+\langle\tilde\rho_t,\ea\rangle\Big)
\2{t\leq\theta}=\left(x+I_t+\langle\tilde\rho_t,\ea\rangle\right)\2{t\leq T_{-x}}$$
where $T_{-x}$ is the hitting time of $(-\infty,-x]$ by $X$ under $\p_0$.\\
Then, since $\left(x+I_t+\langle\tilde\rho_t,\ea\rangle\right)\2{t\leq T_{-x}}$ under $\p_0$ has the same law as $$\left(I_t+\langle\tilde\rho_t,\ea\rangle\right)\2{t\leq T_0}=M_{t\wedge T_0}$$ under $\px$, $M_{\cdot\wedge T_0}$ is a martingale w.r.t. the filtration $\mathcal H=\mathcal F$.
\quad\\

\emph{Step 2:}
We are now able to prove the main part of
Theorem \ref{conditionnment hauteur2}. For $t>0$ and
$\Theta\in\mathcal F_t$, we have
\begin{equation}\label{decomposition_en_3_probas}
\px\left(\Theta,t<T_0\left|\sup_{s\leq T_0}H_s\geq a\right.\right)
=\frac{\px\left(\Theta,\tau_a\leq t<T_0\right)+\px\left(\Theta,t<\tau_a<T_0\right)}{\px(\sup_{s\leq T_0}H_s\geq a)}
\end{equation}
and we will investigate the asymptotic behaviors of the three probabilities in the last equation.
First, assumption (\ref{absorptionCB}) enables us to define the bijective mapping $\phi:(0,\infty)\rightarrow(0,\infty)$ such that
$$\phi(t):=\int_t^\infty\frac{\dif \lambda}{\psi(\lambda)},\quad t>0.$$
We denote by $v$ its bijective inverse.
Trivially, $v(a)\rightarrow0$ as $a$ goes to $\infty$ and thanks to Lemma 2.1 in \cite{Lambert2007}, for $u\geq 0$, we have
\begin{equation}\label{convergence_sur_v}
\lim_{a\rightarrow\infty}\frac{v(a-u)}{v(a)}=e^{\alpha u}.
\end{equation}
According to \cite[Cor. 1.4.2]{Duquesne2002}, the mapping $v$ can be linked to the excursion measure $N$:
\begin{equation*}\label{lien_v_et_N}
v(a)=N(\sup H>a),\quad a>0.
\end{equation*}
Then, by excursion theory and the exponential formula,
\begin{equation}\label{CSBP}
\px\left(\sup_{s\leq T_0}H_s\geq a\right)=1-\exp(-xN(\sup H>a))=1-e^{-xv(a)}.
\end{equation}
Hence,
\begin{equation}\label{proba1}
\px\left(\sup_{s\leq T_0}H_s\geq a\right)\underset{a\rightarrow\infty}\sim xv(a).
\end{equation}

Then, we consider the probability $\px\left(\Theta,\tau_a\leq t<T_0\right)\leq \px(\tau_a\leq t)$ and we want to prove that it goes to 0 faster than $v(a)$ as $a\rightarrow\infty$.
We define \begin{equation}\label{defi_g_a}
g_a:=\sup\{t<\tau_a,X_t=I_t\}
\end{equation}
the left-end point of the first excursion of $X-I$ which reaches height $a$ and set $\eps_a:=\tau_a-g_a$.
Let $(e(t),t\geq0)$ be the excursion point process of $X-I$ at level 0, that is, for $t\geq0$
$$e(t)=\left\{
\begin{array}{cl}
\left((X-I)_{s+I^{-1}(t-)},0\leq s\leq I^{-1}(t)- I^{-1}(t-)\right)&\textrm{ if } I^{-1}(t-)< I^{-1}(t)\\
\partial&\textrm{ otherwise}
\end{array}
\right.$$
where $\partial$ is a cemetery point.

For a generic excursion $\eps$ with duration $\sigma=\sigma(\eps)$, we denote by
$$h^*:=\sup_{[0,\sigma]}H(\eps)$$ its maximum height.
For $t\geq0$, we denote by $\Delta_t$ the length of the excursion $e(t)$ of $X-I$
and we set $h_t^*:=h^{*}(e(t))$. Then, since $(\Delta_t,h_t^*)_{t\geq0}$ is the image by a measurable application of the
Poisson point process $(e(t),t\geq0)$, it is a Poisson point process on $(0,\infty)\times(0,\infty)$.
Distinguishing its atoms $(\delta,h)$ between $h>a$ and $h<a$,
we obtain that
$$Y_t^a:=\sum_{s\leq t}\Delta_s\1_{\{h_s^*<a\}}$$
and
$$\tilde Y_t^a:=\sum_{s\leq t}\Delta_s\1_{\{h_s^*>a\}}$$
are independent.
Moreover, $(Y^a_t,t\geq0)$ is a subordinator with L\'evy measure  $N(\Delta\in dr;\sup H<a)$.
If $\varphi^a$ denotes its Laplace exponent, as $a$ tends to $\infty$,
$\varphi^a\rightarrow\varphi$ where $\varphi=\psi^{-1}$ is the Laplace exponent of the
subordinator $\sum_{s\leq t}\Delta_s$ (see Theorem VII.1 in \cite{Levy_processes}).

Furthermore, $\gamma_a:=\inf\{s\geq0; \tilde Y_s^a\neq0\}$ is independent from $Y^a$ and follows an exponential distribution with parameter $N(\sup H>a)=v(a)$. Hence,
{\setlength\arraycolsep{2pt}
\begin{eqnarray}
\Esp\left[e^{-\lambda g_a}\right]&=&\Esp\left[e^{-\lambda Y^a(\gamma_a)}\right]=v(a)\intpos \dif te^{-v(a)t}\Esp\left[e^{-\lambda Y^a_t}\right]=v(a)\intpos \dif te^{-v(a)t}e^{-\varphi^a(\lambda)t}\nonumber\\
&=&\frac{v(a)}{v(a)+\varphi^a(\lambda)}.\label{intouch}
\end{eqnarray}}
Hence, since $g_a$ and $\eps_a$ are independent,
$$\px(\tau_a\leq t)=\px(g_a+\eps_a\leq t)=\ex\left[\px(g_a\leq t-\eps_a|\eps_a)\1_{\{\eps_a\leq t\}}\right]$$
and by Markov inequality, for all $\lambda>0$,
$$\px(\tau_a\leq t)\leq\ex\left[e^{\lambda(t-\eps_a)}\frac{v(a)}{v(a)+\varphi^a(\lambda)}\1_{\{\eps_a\leq t\}}\right]=\frac{v(a)e^{\lambda t}}{v(a)+\varphi^a(\lambda)}\ex\left[e^{-\lambda \eps_a}\1_{\{\eps_a\leq t\}}\right].$$
Then, \begin{equation}\label{proba2}
\frac{\px(\tau_a\leq t)}{v(a)}\leq\frac{e^{\lambda t}\px(\eps_a\leq t)}{v(a)+\varphi^a(\lambda)}\underset{a\rightarrow\infty}\longrightarrow0
\end{equation}
since $\varphi^a\rightarrow\varphi$ and the r.v. $\eps_a$ is the time to reach height $a$ for an excursion of $X-I$
conditioned on $\sup H>a$, so that $\px(\eps_a\leq t)$ vanishes in the limit $a\rightarrow\infty$.\\

We finally study the asymptotic behavior of the last term of (\ref{decomposition_en_3_probas}). We have
$$\px\left(\Theta,t<\tau_a<T_0\right)=\ex\left[\1_\Theta\1_{\{t\leq T_0\wedge\tau_a\}}\px\left(\tau_a<T_0|\mathcal F_t\right)\right]$$
and
$$1-\px\left(\tau_a<T_0|\mathcal F_t\right)=\ex\left[\left.\exp\left(-\sum_{0\leq u\leq X_t}\chi(u,e(I^{-1}_t(u))\right)\right|\mathcal F_t\right]$$
where for $u>0$ and for a generic excursion $\eps$ of $X-I$ of duration $\sigma=\sigma(\eps)$,
$$\chi(u,\eps)=\left\{
\begin{array}{ccc}
\infty&\textrm{if}&\sup_{[0,\sigma]}H(\eps)>a-H_{I_t^{-1}(u)}\\
0&\textrm{otherwise}
\end{array}
\right.$$
where we remind the reader that $I_t^{-1}(\cdot)$ is the right-continuous inverse of $(I_s^t,s\leq t)$.
Then, by the exponential formula for Poisson point processes,
{\setlength\arraycolsep{2pt}
\begin{eqnarray}
1-\px\left(\tau_a<T_0|\mathcal F_t\right)&=&\exp\left(-\int_0^{X_t}\dif u\int N(\dif \eps)\left(1-e^{-\chi(u,\eps)}\right)\right)\nonumber\\
&=&\exp\left(-\int_0^{X_t}\dif u\ N\left(h^*>a-H_{I_t^{-1}(u)}\right)\right)\nonumber\\
&=&\exp\left(-\int_0^t(\mathrm{d}_sI_s^t+I_t\delta_0(s))\ N(h^*>a-H_s)\right)\nonumber\\
&=&\exp\left(-\langle\rt,v(a-\cdot)\rangle\right).\nonumber
\end{eqnarray}}
Hence, since $\langle\rt,v(a-\cdot)\rangle\leq X_t v(a-H_t)$ goes to 0 as $a$ goes to $\infty$ and according to (\ref{convergence_sur_v}),
$$\frac{\px\left(\tau_a<T_0|\mathcal F_t\right)}{v(a)}=\frac{1-\exp\left(-\langle\rt,v(a-\cdot)\rangle\right)}{v(a)}\underset{a\rightarrow\infty}\longrightarrow
\langle\rho_t,\ea\rangle.$$
Then, using Fatou's Lemma,
$$\underset{a\rightarrow\infty}\liminf\frac{\px\left(\Theta,t<\tau_a<T_0\right)}{v(a)}
\geq\ex\left[\1_{\Theta}\2{t\leq T_0}\langle\rho_t,\ea\rangle\right]=\ex\left[\1_{\Theta}M_{t\wedge T_0}\right].$$
Replacing $\Theta$ by $\Theta^c$ in the latter display, we have
$$\underset{a\rightarrow\infty}\liminf\frac{\px(t<\tau_a<T_0)}{v(a)}-\underset{a\rightarrow\infty}\limsup\frac{\px\left(\Theta,t<\tau_a<T_0\right)}{v(a)}
\geq\ex\left[M_{t\wedge T_0}\right]-\ex\left[\1_{\Theta}M_{t\wedge T_0}\right].$$
Since $M_{\cdot\wedge T_0}$ is a martingale, $\ex\left[M_{t\wedge T_0}\right]=x$. Moreover,
$$\frac{\px(t<\tau_a<T_0)}{v(a)}=\frac{\px(\tau_a<T_0)}{v(a)}-\frac{\px(\tau_a<T_0,\tau_a\leq t)}{v(a)}$$
and in the limit $a\rightarrow\infty$, the first term of the r.h.s. tends to $x$ according to (\ref{proba1})
and the second vanishes thanks to (\ref{proba2}). Finally, we have
\begin{equation}\label{proba3}
\underset{a\rightarrow\infty}\limsup\frac{\px\left(\Theta,t<\tau_a<T_0\right)}{v(a)}
\leq \ex\left[\1_{\Theta}M_{t\wedge T_0}\right]
\leq \underset{a\rightarrow\infty}\liminf\frac{\px\left(\Theta,t<\tau_a<T_0\right)}{v(a)}
\end{equation}
and putting together (\ref{proba1}), (\ref{proba2}) and (\ref{proba3}), the proof is completed.
\end{proof}

\subsection{Properties of the conditioned process}\label{prop_X_pfl}
As in the finite variation case, we define a new probability $\pfl$ as a $h$-transform via the martingale $M_{\cdot\wedge T_0}$
\begin{equation}
  \label{definition_pfl_VI}\pfl(\Theta)=\frac{1}{x}\ex\left[M_{t\wedge T_0}\1_\Theta\right],\quad \Theta\in\mathcal F_t.
\end{equation}
If $\alpha=0$, $M_t=X_t$ and, as previously mentioned in the introduction, our definition of $\pfl$ is the same as that of \cite{Chaumont1994,Chaumont2005} in spectrally positive case. Then, to condition $X$ to reach arbitrarily high heights is equivalent to condition $X$ to hit 0 after arbitrarily long times. In this particular case, $X$ is a Markov process under $\pfl$.

The following proposition is the counterpart of Proposition \ref{convergencenouvelleproba} in the finite variation case.
\begin{prop}\label{convergencenouvelleproba2}
Under $\p^\star_x$, in probability $$X_t\underset{t\rightarrow\infty}\longrightarrow+\infty.$$
\end{prop}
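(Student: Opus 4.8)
The plan is to mimic the structure of the proof of Proposition \ref{convergencenouvelleproba} from the finite variation case, replacing the discrete sum $\sum_i \rho_i^t m^{-i}$ by the integral $\langle\rho_t,\ea\rangle$ and the records of the time-reversed walk by the ladder structure of $S-X$. As there, it suffices to show that $\efl[e^{-\lambda X_t}]\to0$ as $t\to\infty$ for every $\lambda>0$. Writing out the $h$-transform,
\begin{equation*}
\efl\left[e^{-\lambda X_t}\right]=\frac1x\ex\left[e^{-\lambda X_t}\Big(I_t+\langle\tilde\rho_t,\ea\rangle\Big)\2{t\leq T_0}\right].
\end{equation*}
When $\alpha=0$ we have $M_t=X_t$ and, exactly as in \eqref{conv}, the result follows because $(X_{t\wedge T_0})$ is a nonnegative martingale converging a.s.\ to $0$ and $x\mapsto e^{-\lambda x}x$ is bounded. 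So the real content is the case $\alpha>0$, i.e.\ $X$ drifting to $-\infty$.

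For $\alpha>0$ I would split the expectation along the value of $H_t$, fixing a truncation level $c>0$ and writing $\2{t\le T_0}=\2{t\le T_0,H_t\le c}+\2{t\le T_0,H_t>c}$. On $\{H_t\le c\}$ one has $\langle\tilde\rho_t,\ea\rangle\le e^{\alpha c}\langle\tilde\rho_t,\1\rangle=e^{\alpha c}(X_t-I_t)$, so that term is bounded by $x^{-1}e^{\alpha c}\ex[e^{-\lambda X_t}(X_t-I_t)\2{t\le T_0}]+x^{-1}\ex[e^{-\lambda X_t}(-I_t)\2{t\le T_0}]$; since $X_t-I_t=S_t^{(t)}$ after time reversal and $-I_t\to\infty$ is harmless only once multiplied by $e^{-\lambda X_t}$ with $X_t\ge0$, both pieces tend to $0$ as $t\to\infty$ (use $T_0<\infty$ a.s.\ for the $I_t$-piece, and for the $X_t-I_t$-piece the same boundedness/dominated-convergence argument as in \eqref{conv} applied to the $h$-transform by $x\mapsto x$). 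What remains is to control the contribution of $\{H_t>c\}$ uniformly in $t$ and let $c\to\infty$ afterwards.

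The heart of the matter is thus the bound
\begin{equation*}
A(t,c):=\ex\left[e^{-\lambda X_t}\langle\tilde\rho_t,\ea\rangle\2{t\le T_0}\2{H_t>c}\right]\le \ex\left[e^{-\lambda(X_t-I_t)}\langle\tilde\rho_t,\ea\rangle\2{t\le T_0}\2{H_t>c}\right],
\end{equation*}
which after time-reversal at $t$ becomes an expression in the time-reversed process with $X_t-I_t\rightsquigarrow S_t$ and $H_t\rightsquigarrow L_t$. Here I would use the ladder decomposition: along the ladder times $L^{-1}(b)$ the process $S$ increases by the ladder heights, and $\langle\tilde\rho_t,\ea\rangle$ can be compared to $\int_0^{L_t}e^{\alpha u}\,\mathrm d S^*_u$-type integrals (the exploration process decomposed along the spine of $S-X$). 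I would then estimate, just as in Lemma \ref{integrabilit�_rho_f}, using the bivariate Laplace exponent $\kappa(\mu,\lambda)$ from \eqref{kappa} and the Markov inequality $\2{L^{-1}(b)\le t}\le e^{\mu t}e^{-\mu L^{-1}(b)}$, to produce a bound of the form $A(t,c)\le C(\lambda,\mu)\,g(c)$ with $g(c)\to0$ as $c\to\infty$, independently of $t$; the decay of $g$ should come from the factor $e^{-b\mu/\varphi(\mu)}$ integrated/summed over ladder levels $b\ge c$, combined with the gain from $e^{-\lambda S_t}$ (or equivalently from replacing one factor $\psi(\lambda)/\lambda$ by the slightly smaller constant, as in the estimate $1-\psi(\lambda)/\lambda<m$ used in Proposition \ref{convergencenouvelleproba}).

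I expect the main obstacle to be making the comparison between $\langle\tilde\rho_t,\ea\rangle$ and the ladder functional rigorous: in the finite variation case the identity $\sum_{i=1}^{H_t}\rho_i^t m^{-i}=\sum_{i\ge1}\tilde\rho_i m^{i-R_t-1}\2{\tilde T_i\le t}$ is exact and elementary, whereas here one needs the Stieltjes-integral version of the time-reversal together with the (known) identification of the exploration process $\tilde\rho_t$ with the excursion-measure description of $S-X$, and one must check that the weight $e^{\alpha H_s}$ pushes through the reversal correctly (it becomes $e^{\alpha L_s}$ along the ladder). Once that bookkeeping is done, the analytic estimates are routine and parallel those already carried out in Lemma \ref{integrabilit�_rho_f} and in \eqref{majoration1}--\eqref{bigmajoration}, and the conclusion $\efl[e^{-\lambda X_t}]\to0$, hence $X_t\to+\infty$ in $\pfl$-probability, follows.
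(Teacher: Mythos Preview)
Your plan is correct and matches the paper's proof essentially step for step: reduce to $\efl[e^{-\lambda X_t}]\to0$, handle $\alpha=0$ via \eqref{conv}, split on $\{H_t\le a\}$ versus $\{H_t>a\}$, and control the large-height piece by time-reversing and exploiting the ladder structure via \eqref{kappa}. One small correction to your bookkeeping: under time-reversal the exploration integral becomes $\int_0^t\dif S_r\,e^{\alpha(L_t-L_r)}$ (not $e^{\alpha L_r}$), just as in the discrete case $m^{-i}\to m^{i-R_t-1}$; the paper then applies the strong Markov property at $L^{-1}(a)$ and uses that $(S_{L^{-1}(b)})_b$ is a subordinator with Laplace exponent $\psi(\lambda)/\lambda>\alpha$ to get the uniform-in-$t$ decay you anticipate.
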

%


\begin{proof}
We want to prove that under $\pfl$, $X_t$ goes to infinity in probability as $t\rightarrow\infty$.
We make a similar proof as that of Proposition \ref{convergencenouvelleproba}, that is, we want to prove that for $\lambda>0$, $\Esp^\star_x\left[e^{-\lambda X_t}\right]\rightarrow0$ as $t$
tends to $\infty$.

First, in the case $\alpha=0$,
\begin{equation*}
\Esp^\star_x\left[e^{-\lambda X_t}\right]=\frac{1}{x}\Esp_x\left[e^{-\lambda X_t}X_{t\wedge T_0}\right]
\underset{t\rightarrow\infty}\longrightarrow0
\end{equation*}
using dominated convergence theorem.

We now suppose that $\alpha>0$. Then, for all positive $a$,
{\setlength\arraycolsep{2pt}
\begin{eqnarray}
\Esp^\star_x\left[e^{-\lambda X_t}\right]&=&\frac{1}{x}\ex\left[e^{-\lambda X_t}\left(I_t+\int_0^t\mathrm{d}_sI_s^te^{\alpha H_s}\right);t\leq T_0\right]\nonumber\\
&\leq& \p_x(t\leq T_0)+\frac{1}{x}\ex\left[e^{-\lambda X_t}\int_0^t\mathrm{d}_sI_s^te^{\alpha H_s}\Big(\2{H_t\leq a}+\2{H_t>a}\Big)\2{t\leq T_0}\right]\nonumber\\
&\leq& \p_x(t\leq T_0)+\frac{e^{a\alpha}}{x}\ex\left[e^{-\lambda X_t}(X_t-I_t);t\leq T_0\right]\nonumber\\
&&\qquad\qquad\qquad\qquad\qquad+\frac{1}{x}\ex\left[e^{-\lambda(X_t-I_t)}\int_0^t\mathrm{d}_sI_s^te^{\alpha
H_s};H_t>a\right].\label{majoration_a}
\end{eqnarray}}
The first and the second terms of the r.h.s. vanish as $t$ goes to $\infty$ thanks to the dominated convergence theorem and because $T_0$ is finite a.s. We now want to show that the third term of (\ref{majoration_a}) vanishes as $a$ goes to infinity uniformly in $t$.
By time-reversing at time $t$, we have
$$B(t,a):=\ex\left[e^{-\lambda(X_t-I_t)}\int_0^t\mathrm{d}_sI_s^te^{\alpha H_s};H_t>a\right]=\Esp\left[e^{-\lambda S_t}\int_0^t\dif_rS_re^{\alpha (L_t-L_r)};L_t>a\right].$$

Then, observing that $\{L_t>a\}=\{L^{-1}(a)<t\}$ and applying the strong Markov property at the stopping time $L^{-1}(a)$,
{\setlength\arraycolsep{2pt}
\begin{eqnarray}
B(t,a)&=&\Esp\Bigg[e^{-\lambda S_{L^{-1}(a)}}e^{\alpha a}\2{L_t>a}e^{-\lambda\left(S_t-S_{L^{-1}(a)}\right)}e^{\alpha(L_t-a)}\nonumber\\
&&\qquad\qquad\qquad\qquad\qquad\qquad\cdot\left(\int_{0}^{L^{-1}(a)}\dif_rS_re^{-\alpha L_r}+\int_{L^{-1}(a)}^t\dif_rS_re^{-\alpha L_r}\right)\Bigg]\nonumber\\
&=&\Esp\Bigg[e^{-\lambda S_{L^{-1}(a)}}e^{\alpha a}\2{L_t>a}\Bigg(f_1(t-L^{-1}(a))\int_{0}^{L^{-1}(a)}\dif_rS_re^{-\alpha L_r}\nonumber\\
&&\qquad\qquad\qquad\qquad\qquad\qquad\qquad\qquad\qquad\qquad+e^{-\alpha a}f_2(t-L^{-1}(a))\Bigg)\Bigg]\label{cla}
\end{eqnarray}}
where for $u\geq0$, $$f_1(u)=\Esp\left[e^{-\lambda S_u}e^{\alpha L_u}\right]$$
and
$$f_2(u)=\Esp\left[e^{-\lambda S_u}e^{\alpha L_u}\int_{0}^u\dif_rS_re^{-\alpha L_r}\right]\leq \Esp\left[e^{-\lambda S_u}S_ue^{\alpha L_u}\right] $$
We first show that these two functions are bounded on $(0,\infty)$.
As in (\ref{egalite_esperance_integrale}), we have
$$f_1(u)=\Esp[e^{-\lambda S_u}e^{\alpha L_u}]=\int_1^\infty \dif a\Esp\left[e^{-\lambda S_u}\2{L_u>\ln a/\alpha}\right]+\Esp\left[e^{-\lambda S_u}\right].$$
For $b>0$, using (\ref{kappa}),
$$\Esp\left[e^{-\lambda S_u}\2{L_u>b}\right]\leq \Esp\left[e^{-\lambda S_{L^{-1}(b)}}\2{b<L(\infty)}\right]=\exp\left(-b\frac{\psi(\lambda)}{\lambda}\right).$$
Hence, choosing $b=\ln a/\alpha$,
$$f_1(s)\leq \int_1^\infty a^{-\psi(\lambda)/(\lambda\alpha)}\dif a+1<\infty$$
because $\psi$ is convex and $\psi(\lambda)/\lambda>\psi'(0)=\alpha$ as soon as $\lambda>0$.

We do the same with $f_2$. For $b>0$, if $M$ denotes an upper bound of the mapping $x\mapsto xe^{-\lambda x}$,
{\setlength\arraycolsep{2pt}
\begin{eqnarray}
\Esp\left[e^{-\lambda S_s}S_s\2{L_s>b}\right]&=&\Esp\left[e^{-\lambda S_{L^{-1}(b)}}e^{-\lambda (S_s-S_{L^{-1}(b)})}\left(S_s-S_{L^{-1}(b)}+S_{L^{-1}(b)}\right)\2{L_s>b}\right]\nonumber\\
&\leq& M\Esp\left[e^{-\lambda S_{L^{-1}(b)}}\2{L_s>b}\right]+\Esp\left[e^{-\lambda S_{L^{-1}(b)}}S_{L^{-1}(b)}\2{L_s>b}\right]\nonumber\\
&\leq& \exp\left(-b\frac{\psi(\lambda)}{\lambda}\right)\left(M+b\frac{\psi'(\lambda)\lambda-\psi(\lambda)}{\lambda^2}\right)\nonumber
\end{eqnarray}}
using that $(S_{L^{-1}(b)},b<L(\infty))$ is a subordinator with Laplace exponent $\psi(\lambda)/\lambda$ according to (\ref{kappa}).
Then, taking $b=\ln a/\alpha$,
$$f_2(s)\leq C(\lambda)\int_1^\infty\left(1+\ln a\right) a^{-\alpha^{-1}\psi(\lambda)/\lambda}\dif a+M<\infty.$$

We come back to (\ref{cla}). If $C_1$ (resp. $C_2$) denotes an upper
bound of $f_1$ (resp. $f_2$) and observing that
$$\int_{0}^{L^{-1}(a)}\dif_rS_re^{-\alpha L_r}\leq
S_{L^{-1}(a)}\as,$$ we have
{\setlength\arraycolsep{2pt}
\begin{eqnarray}
B(t,a)&\leq& C_1e^{\alpha a}\Esp\left[e^{-\lambda S_{L^{-1}(a)}}S_{L^{-1}(a)}\2{L_t>a}\right]
+C_2\Esp\left[e^{-\lambda S_{L^{-1}(a)}}\2{L_t>a}\right]\nonumber\\
&\leq&C_1\frac{\psi'(\lambda)\lambda-\psi(\lambda)}{\lambda^2}ae^{\alpha a}e^{-a\frac{\psi(\lambda)}{\lambda}}
+C_2e^{-a\frac{\psi(\lambda)}{\lambda}}.\nonumber
\end{eqnarray}}
Thus, $B(t,a)$ goes to 0 uniformly in $t$ as $a$ tends to $+\infty$ and the proof is completed.
\end{proof}

The definition of $\pfl$ in \eqref{definition_pfl_VI} does not make sense for $x=0$. However, since $X$ has infinite variation, it is possible to define the law $\p^\star$ of the conditioned process starting at 0. Roughly speaking, it is the law of an excursion of $X-I$ conditioned to reach an infinite height.
We recall that $N$ denotes the excursion measure of $X-I$ away from 0. By doing similar computations as those in the proof of Theorem \ref{conditionnment hauteur2}, one can prove that for $t\geq0$ and for $\Theta\in\mathcal F_t$,
\begin{equation}\label{P_fleche_0}
\lim_{a\to\infty}N\left(\Theta|\sup H>a\right)=N\left(\int_0^{H_t}\tilde\rho_t(\dif z)e^{\alpha z}\1_\Theta\right)=:\p^{\star}(\Theta)
\end{equation}
where $\tilde\rho$ is defined by \eqref{defi_rho_tilde}. In comparison with the previous proof, $\px(\sup_{[0,T_0]}H>a)=1-e^{-xv(a)}$ is replaced by $N(\sup H>a)=v(a)$ and the weight $I_t$ at $0$ in the definition of the measure $\rho_t$ is no more present.

We then justify that, under this probability, $X$ starts at 0 by proving that $\p^\star$ is the limit (in Skorokhod space) of the family of measures $\pfl$ as $x\rightarrow0$. This kind of problem has already been treated by Bertoin in \cite[Prop VII.14]{Levy_processes} and by Chaumont and Doney in \cite[Thm 2]{Chaumont2005} for different hypotheses about $X$ as mentioned previously.

To obtain the convergence of $\pfl$, as in \cite{Chaumont2005}, we first get a path decomposition of the process $(X,\pfl)$ at its minimum.

Let $\eta:=\sup\{t\geq0,\ X_t=I_t\}$ be the time at which the ultimate infimum of $X$ is attained. Then, we defined the pre-minimum and post-minimum processes as
$$\overleftarrow{X}:=(X(t),0\leq t<\eta)$$ and $$\overrightarrow{X}:=(X(t+\eta)-X_{\eta-},t\geq0).$$
 We get the following path decomposition of the conditioned process.
\begin{prop}\label{decomposition_minimum}
  Under $\pfl$, we have that
  \begin{enumerate}[(i)]
    \item  $X_{\eta-}$ is uniformly distributed on $[0,x]$ and $\pfl(X_{\eta-}=X_{\eta})=1$,
    \item the two processes $\overleftarrow{X}$ and $\overrightarrow{X}$ are independent,
    \item conditional on $X_\eta=u$, $\overleftarrow{X}$ has the law of $X$ under $\px$ and killed at time $$T_u:=\inf\{t\geq0;X_t=u\},$$
    \item $\overrightarrow{X}$ is distributed as $\p^\star$.
  \end{enumerate}
\end{prop}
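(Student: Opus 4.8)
The plan is to exploit the fact that the martingale $M$ degenerates to a constant at hitting times of levels, which gives a ``restart at the infimum'' identity and reduces the whole statement to a single excursion‑theoretic point.

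\emph{Restart identity.} Fix $0<u<x$ and put $T_u:=\inf\{t:X_t=u\}$. Since $X$ is spectrally positive with infinite variation it reaches level $u$ continuously, $X>u$ on $[0,T_u)$ and $I$ decreases continuously from $x$ to $0$, so $T_u<T_0$ $\px$‑a.s.; moreover $H_{T_u}=0$, whence $\tilde\rho_{T_u}=0$ and $M_{T_u}=I_{T_u}=u$. By optional stopping of the martingale $M_{\cdot\wedge T_0}$ this gives $\pfl(\Theta)=\tfrac1x\ex[M_{T_u\wedge T_0}\1_\Theta]=\tfrac ux\px(\Theta)$ for $\Theta\in\mathcal F_{T_u}$. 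Next one checks that the $M$‑process itself restarts at $T_u$: the increase measure $\dif_rI_r^{T_u+s}$ charges only $[T_u,T_u+s]$, and (again using $H_{T_u}=0$) the records counted in $H_r$ for $r\ge T_u$ all lie in $[T_u,r]$, so $H_{T_u+s}$ and $\tilde\rho_{T_u+s}$ coincide with the height and exploration processes of the shifted path $X_{T_u+\cdot}$; hence $M_{(T_u+s)\wedge T_0}$ equals the $M$‑functional of $X_{T_u+\cdot}$. Combining this with the strong Markov property of $X$ under $\px$ at $T_u$ and the defining formula \eqref{definition_pfl_VI} applied with starting point $u$ yields, for $A\in\mathcal F_{T_u}$ and any measurable path functional $B$,
$$\pfl\bigl(A\cap\theta_{T_u}^{-1}B\,;\,T_u<\infty\bigr)=\frac ux\,\px(A)\,\p^\star_u(B);$$
in particular $\pfl(T_u<\infty)=u/x$.

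\emph{Reading off (i)--(iii).} Under $\pfl$ one has $X_t\to+\infty$ (Proposition \ref{convergencenouvelleproba2}), so $\{T_u<\infty\}=\{X_\eta\le u\}$ and $\pfl(X_\eta\le u)=u/x$ for $0\le u\le x$, i.e.\ $X_\eta$ is uniform on $[0,x]$; a jump at $\eta$ would be positive, forcing $X_{\eta-}<X_\eta=I_\eta\le I_{\eta-}$, contradicting $X\ge I$, so $X_{\eta-}=X_\eta$ a.s. Dividing the displayed identity by $\pfl(T_u<\infty)$ shows that, conditionally on $\{T_u<\infty\}$, the pre‑$T_u$ path (distributed as $X$ under $\px$ killed at $T_u$) is independent of the shifted path (distributed as $\p^\star_u$). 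Disintegrating over $X_\eta$ and concatenating the pre‑$T_u$ piece with the pre‑minimum piece of the $\p^\star_u$‑path — which, by the same identity applied with starting point $u$, is $\p_u$ killed at $T_{X_\eta}$ — gives $\overleftarrow X$ distributed as $X$ under $\px$ killed at $T_{X_\eta}$ and independent of $\overrightarrow X$, and confirms $\eta<\infty$ a.s.; this is (ii) and (iii). Running the restart argument inside $\p^\star_u$ also shows that the law of $\overrightarrow X$, the post‑minimum of $\p^\star_u$, does not depend on $u>0$.

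\emph{Identifying $\overrightarrow X$ with $\p^\star$, and the main obstacle.} The time $\eta$ is the left end‑point of the excursion of $X-I$ away from $0$ that attains infinite height, all other excursions being of finite height, so the recentred post‑minimum path is exactly that excursion conditioned to reach infinite height; by the construction $\p^\star=\lim_{a\to\infty}N(\cdot\mid\sup H>a)$ in \eqref{P_fleche_0}, together with the path decomposition at $g_a$ used in the proof of Theorem \ref{conditionnment hauteur2}, this excursion has law $\p^\star$, which is (iv). The main difficulty is precisely this last step: one must make rigorous that the post‑minimum part is a single $N$‑excursion conditioned to reach infinite height and that its recentred law is exactly $\p^\star$, which requires re‑running the $g_a$‑excursion analysis of Theorem \ref{conditionnment hauteur2} and controlling the null‑event conditioning on $\{X_\eta=w\}$ through the disintegration $\int_0^x\frac{\dif w}{x}(\cdots)$. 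The ``restart'' computations of the first part are routine by comparison.
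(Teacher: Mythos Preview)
Your restart identity at $T_u$ is a genuinely different route from the paper's: the paper never uses the strong Markov property of the $h$-transformed exploration process, but works throughout with the approximation $\pfl=\lim_a\px(\,\cdot\mid\tau_a<T_0)$, decomposes at the left endpoint $g_a$ of the first excursion of $X-I$ reaching height $a$, exploits the independence of the pre-$g_a$ and post-$g_a$ parts under this conditioning (the latter having law $N(\,\cdot\mid\sup H>a)$), and only then lets $a\to\infty$. Your identity yields the uniform law of the minimum and the conditional independence of pre-$T_u$ and post-$T_u$ in one stroke, which is elegant; and your verification that $M_{(T_u+s)\wedge T_0}$ coincides with the $M$-functional of the shifted path is correct.

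There are, however, two genuine gaps. First, your proof that $X_{\eta-}=X_\eta$ asserts $X_\eta=I_\eta$, but this is exactly the point at issue: if $X$ did jump at $\eta=\sup\{t:X_t=I_t\}$, then $\eta$ would be only a left accumulation point of $\{X=I\}$, one would have $X_{\eta-}=I_{\eta-}=I_\eta<X_\eta$, and your contradiction disappears. The paper handles this through the $g_a$-approximation together with Millar's result that an infinite-variation L\'evy process does not jump immediately after a local minimum. Second, and more substantially, your passage from the restart identity to (ii)--(iii) is circular. The identity gives independence of the path up to $T_u$ from the path after $T_u$ conditional on $\{T_u<\infty\}$; but $\overleftarrow X$ is the path up to $\eta\ge T_u$, hence depends on the post-$T_u$ piece as well. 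To separate $\overleftarrow X$ from $\overrightarrow X$ you would need to know that under $\p^\star_u$ the pre-minimum and post-minimum are already independent, which is (ii) for starting point $u$. Likewise, the assertion that $Q_u$ (the post-minimum law under $\p^\star_u$) does not depend on $u$ does not follow from the restart identity alone: that identity only yields $\pfl(\overrightarrow X\in B,\,X_\eta\le u)=\tfrac{u}{x}Q_u(B)$, and constancy of $Q_u$ would require knowing that $\overrightarrow X$ is independent of $X_\eta$, which is again (ii). The paper avoids this loop because the $g_a$-decomposition delivers the independence and the identification of the post-minimum law simultaneously, as the limit $N(\,\cdot\mid\sup H>a)\to\p^\star$. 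You correctly flag that this excursion argument is the crux, but as written your (ii)--(iii) already rest on it rather than reducing to the ``routine'' restart computations.
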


\begin{proof}
We begin by proving (i). For $0<x\leq y$,
by applying \eqref{definition_pfl_VI} at the $(\mathcal F_t)_{t\geq0}$-stopping time $T_y$,
\begin{equation}
  \pfl(X_{\eta -}\leq y)=\pfl(T_y<\infty)=\frac{1}{x}\ex\left[M_{T_y\wedge T_0}\2{T_y<\infty}\right].
\end{equation}
 Under $\px$, since $X$ has no negative jumps, we have a.s. $T_y<T_0<\infty$ and $M_{T_y}=I_{T_y}=y$ by the definition of $M$ and since $H_{T_y}=0$. Hence, $\pfl(X_{\eta -}\leq y)=x/y$ and under $\pfl$, $X_{\eta -}$ is uniformly distributed on $[0,x]$.

We now prove that $\pfl(X_{\eta-}=X_\eta)=1$.
We set $$\vartheta:=\inf\left\{t<T_0;\ H_t=\sup_{s\in[0,T_0]}H_s\right\}$$ the hitting time of the maximal height reached by $X$ before $T_0$
and let $$g:=\sup\left\{t<\sigma;\ X_t=I_t\right\}$$ be the left-end point of the last excursion of $X-I$ that reaches this maximal height. Notice that under $\px$, $\vartheta$ is finite a.s. because $H$ is continuous and $T_0<\infty$ a.s. and that under $\pfl$, $g=\eta$ a.s. according to Proposition \ref{convergencenouvelleproba2}.

For $a>0$, we recall from \eqref{defi_g_a} that $g_a$ is the left-end point of the first excursion that reaches height $a$ and we set
 $d_a:=\inf\{t>\tau_a,X_t=I_t\}$ its right-end point.
The main idea that we use in the following is that under $\px(\ \cdot\ |\tau_a<T_0)$, with high probability, $\tau_a>g=g_a$. Indeed, applying the Markov property at time $\tau_a$,
$$\p_x(\tau_a\leq g)\leq\ex\left[\2{\tau_a<T_0}\p_{X_{\tau_a}}(\tau_a<T_0)\right]\leq\px(\tau_a<T_0)^2$$
and
\begin{equation}\label{tau_a&g}\p_x(\tau_a\leq g|\tau_a<T_0)\leq \p_x(\tau_a\leq T_0)\underset{a\to\infty}\longrightarrow0.\end{equation}

Using that and Theorem \ref{conditionnment hauteur2}, we have $$\pfl(X_{\eta-}=X_\eta)=\lim_{a\to\infty}\px\left(\left.X_{g-}=X_{g}\right|\tau_a\leq T_0\right)=\lim_{a\to\infty}\px\left(\left.X_{g-}=X_{g},\tau_a>g\right|\tau_a\leq T_0\right).$$
Moreover, since $g_a=g$ on $\{T_0>\tau_a>g\}$,
$$\pfl(X_{\eta-}=X_\eta)=\lim_{a\to\infty}\px\left(\left.X_{g_a-}=X_{g_a},\tau_a>g\right|\tau_a\leq T_0\right)=\lim_{a\to\infty}\px\left(\left.X_{g_a-}=X_{g_a}\right|\tau_a\leq T_0\right)$$
and since $X$ has infinite variation paths, $0$ is regular for $(0,\infty)$ and according to \cite{Millar1977}, $X$ can not jump just after attaining a local minimum. Then, conditional on $\{g_a<\infty\}$, $X_{g_a-}=X_{g_a}\as$ and $\pfl(X_{\eta-}=X_\eta)=1$.

\medskip

We now prove points (ii), (iii) and (iv).
For $l>0$, let $\mathbb D$ (resp. $\mathbb D_l$) be the set of c\`{a}dl\`{a}g functions from $[0,+\infty)$ (resp. $[0,l]$) to $\R\cup\{\delta\}$ where we remind that $\delta$ is a cemetery point. We endow these spaces with the Skorokhod's topology.

It is sufficient to show that for all bounded and continuous function $F$ on $\mathbb D$, for all $l>0$ and for all bounded and continuous function $G$ on $\mathbb D_l$,
$$\efl\left[F(\pre)G(\post\circ k_l)\right]=\left(\int_0^x\frac{\dif y}{x}\ex\left[f(X\circ k_{T_y})\right]\right)\cdot \p^\star\left[G(X\circ k_l)\right]$$
where $k_l$ is the killing operator at time $l$.
Let
 $$X_1^a:=(X(t),0\leq t< g_a)$$ and $$X_2^a=(X(t+g_a)-I_{g_a},0\leq t\leq (d_a-g_a)\wedge l)$$ be respectively
 the pre-$g_a$ process and the first excursion of $X-I$ that reaches height $a$ and killed at time $l$.
According to Theorem \ref{conditionnment hauteur2},
$$\efl\left[F(\pre)G(\post\circ k_l)\right]=\lim_{a\to\infty}\ex\left[\left.F(\pre)G(\post\circ k_l)\right|\tau_a\leq T_0\right].$$
Using \eqref{tau_a&g}, since $F$ and $G$ are bounded and since $\pre=X_1^a$ and $\post\circ k_l=X_2^a$ on $\{\tau_a>g\}$, we have
{\setlength\arraycolsep{2pt}
\begin{eqnarray}
\efl\left[F(\pre)G(\post\circ k_l)\right]&=&\lim_{a\to\infty}\ex\left[\left.F(\pre)G(\post\circ k_l)\2{\tau_a>g}\right|\tau_a\leq T_0\right]\nonumber\\
&=&\lim_{a\to\infty}\ex\left[\left.F(X_1^a)G(X_2^a)\2{\tau_a>g}\right|\tau_a\leq T_0\right]\nonumber\\
&=&\lim_{a\to\infty}\ex\left[\left.F(X_1^a)G(X_2^a)\right|\tau_a\leq T_0\right].\nonumber
\end{eqnarray}}
 As already said in the computation of \eqref{intouch} in the proof of Theorem \ref{conditionnment hauteur2}, under $\px(\ \cdot\ |\tau_a<T_0)$, the two processes $X_1^a$ and $X_2^a$
are independent. Moreover $\gamma_a=I_{g_a}$ is an exponential r.v. with parameter $v(a)=N(\sup H>a)$ and independent from the process $X_1^a$.
 Then, we have
\begin{equation}\label{myloxyloto}
\efl\left[F(\pre)G(\post\circ k_l)\right]=\lim_{a\to\infty}\ex\left[\left.F(X_1^a)\right|\tau_a\leq T_0\right]\cdot N\left[G(\eps\circ k_l)|\sup H(\eps)\geq a\right].
\end{equation}
Furthermore, under $\px(\ \cdot\ |\tau_a<T_0)$, $X_1^a$ has the same distribution as $X$ (under $\px$), that do not reach height $a$ and killed at the hitting time of the level $I_{g_a}$, that is,
$$\ex\left[\left.F(X_1^a)\right|\tau_a\leq T_0\right]=\int_0^x\px(-I_{g_a}\in\dif y|\tau_a\leq T_0)\ex\left[f(X\circ k_{T_y})|T_y<\tau_a\right].$$
We now compute the law of $I_{g_a}$ under $\px(\ \cdot\ |\tau_a<T_0)$.
For $0\leq y\leq x$, using that under $\p$, $-I_{g_a}$ is exponentially distributed with parameter $v(a)$, $$\px(I_{g_a}>y|\tau_a<T_0)=\frac{\p(-I_{g_a}<x-y)}{\p(-I_{g_a}<x)}=\frac{1-e^{-(x-y)v(a)}}{1-e^{-xv(a)}}.$$
Then,
{\setlength\arraycolsep{2pt}
\begin{eqnarray*}
\ex\left[\left.F(X_1^a)\right|\tau_a\leq T_0\right]&=&\int_0^x\frac{v(a)e^{-(x-y)v(a)}\dif y}{1-e^{-xv(a)}}\ex\left[f(X\circ k_{T_y})|T_y<\tau_a\right]\\
&\underset{a\to\infty}
\longrightarrow&\int_0^x\frac{\dif y}{x}\ex\left[f(X\circ k_{T_y})\right]
\end{eqnarray*}}
and under $\pfl$, the pre-minimum process has the law of $X$ under $\px$ killed when it reaches an uniform level in $[0,x]$.

Finally, according to \eqref{P_fleche_0}, the second term in the r.h.s. of \eqref{myloxyloto} converges as $a\to\infty$ to $\p^\star(G(X\circ k_l)$ because the convergence in \eqref{P_fleche_0} is stronger than the convergence in law in $\mathbb D_l$.

Letting $a\to\infty$ in \eqref{myloxyloto}, we obtain that under $\pfl$, $\pre$ and $\post$ are independent, and that the law of $\post$ is $\p^\star$
and the proof is complete.
\end{proof}

The path decomposition given in Proposition \ref{decomposition_minimum} enables us to prove the following convergence result.

\begin{thm}\label{convergence_pfl}
The family of measures $(\pfl,x>0)$ converges on the Skorokhod space to $\p^\star$.
\end{thm}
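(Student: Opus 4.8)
The plan is to deduce the convergence directly from the path decomposition of Proposition~\ref{decomposition_minimum}, working on the canonical space carrying $X$ under $\pfl$ and exploiting the objects it produces there: the time $\eta$ of the ultimate infimum of $X$, the level $U:=X_{\eta-}=X_\eta$, and the post-minimum process $\post=(X_{\eta+t}-U)_{t\ge0}$. By Proposition~\ref{decomposition_minimum}, for \emph{every} $x>0$ one has $\post_0=0$, $\post$ has law $\p^\star$, $U$ is uniform on $[0,x]$, and, conditionally on $\{X_\eta=u\}$, the pre-minimum path $(X_t)_{0\le t<\eta}$ is distributed as $X$ under $\px$ killed at $T_u$. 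In particular $\pfl$ carries on a single space both the trajectory $X$ and a copy $\post$ of $\p^\star$, so it is enough to show that $X$ and $\post$ coalesce as $x\to0$, namely that $d_{J_1}(X,\post)\to0$ in $\pfl$-probability on each $\mathbb D([0,T])$. Since $\post$ has law $\p^\star$ for all $x$, a standard ``converging together'' argument (testing against bounded $d_{J_1}$-Lipschitz functionals, which are convergence-determining on the Polish space $\mathbb D([0,T])$) then yields $\pfl\Rightarrow\p^\star$.

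The first step is to prove that the pre-minimum part is asymptotically negligible: with $\mathcal E_x:=U+\eta+\sup_{0\le s<\eta}|X_s|$ (finite $\pfl$-a.s.), I would show $\mathcal E_x\to0$ in $\pfl$-probability as $x\to0$. For $U$ this is immediate from Proposition~\ref{decomposition_minimum}(i), since $\pfl(U>\delta)=(x-\delta)^+/x$. For $\eta$ and the amplitude I would condition on $\{X_\eta=u\}$ and use part (iii) together with the facts that $T_u\le T_0$ $\px$-a.s.\ and $u<X_t\le\sup_{[0,T_0]}X$ for $t<T_u$ (both because $X$ has no negative jumps), which reduce the claim to $\px(T_0>\delta)\to0$ and $\px(\sup_{[0,T_0]}X\ge\eps)\to0$ as $x\to0$. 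Writing $X=x+\xi$ under $\px$ with $\xi$ the underlying Lévy process started from $0$, one has $T_0=\inf\{t:\xi_t\le-x\}$, which decreases $\p$-a.s.\ to $\inf\{t:\xi_t<0\}=0$ as $x\downarrow0$ because $X$ has infinite variation and hence $0$ is regular for $(-\infty,0)$; both limits then follow by dominated convergence.

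The second step is the $J_1$ comparison of $X$ with $\post$. Let $Z$ be the path obtained from $X$ by flattening its pre-minimum portion to the constant level $U$, i.e.\ $Z_t:=U$ for $t<\eta$ and $Z_t:=X_t$ for $t\ge\eta$. Since $X_{\eta+s}=U+\post_s$ and $\post_0=0$, one has $Z=U+\mathrm{delay}_\eta\post$, where $\mathrm{delay}_\eta$ is the delay operator $g\mapsto g((\cdot-\eta)^+)$. The elementary sup-norm bounds $\|X-Z\|_\infty\le\mathcal E_x$ and $\|Z-\mathrm{delay}_\eta\post\|_\infty=U\le\mathcal E_x$ give $d_{J_1}(X,\post)\le 2\mathcal E_x+d_{J_1}(\mathrm{delay}_\eta\post,\post)$; since the delay operator is continuous at $0$ on the Skorokhod space, and $\eta\to0$ in $\pfl$-probability by Step~1, the right-hand side goes to $0$ in $\pfl$-probability. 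Combining this with the fact that $\post$ has law $\p^\star$ in the converging-together argument of the first paragraph completes the proof.

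The only genuinely delicate point is the $J_1$ bookkeeping in the second step: a naive ``shift by $\eta$'' comparison fails because the $J_1$ distance is sensitive to the exact positions of the (possibly dense small) jumps, which is why I would route the estimate through the auxiliary path $Z$ — flatten the pre-minimum part first, then undo the delay — rather than comparing $X$ with $\post$ directly. The two regularity limits invoked in Step~1 are classical for infinite-variation Lévy processes, and the continuity of the delay operator is standard; so once the path decomposition of Proposition~\ref{decomposition_minimum} is in hand, no further serious obstacle remains.
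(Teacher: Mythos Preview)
Your approach is correct and takes a genuinely different route from the paper's. Both arguments rest on Proposition~\ref{decomposition_minimum}, but from there they diverge. The paper invokes \cite[Thm~2]{Chaumont2005} as a black box, reducing the convergence to two conditions: $\pfl(\eta>C)\to0$ and $\pfl(S_\eta>C)\to0$ as $x\to0$. It verifies the first by an explicit Laplace-transform computation (passing through the conditioning $\{\tau_a<T_0\}$ and letting $a\to\infty$, which yields $\efl[e^{-\lambda\eta}]=(1-e^{-x\varphi(\lambda)})/(x\varphi(\lambda))$), and the second via the scale function $W$ of the spectrally positive process. Your proof, by contrast, is a direct ``converging together'' coupling: you exploit that under $\pfl$ the path $X$ already carries a copy $\post$ of $\p^\star$, and you control $d_{J_1}(X,\post)$ by flattening the pre-minimum part and undoing a small delay. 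Your negligibility estimates for $\eta$ and $\sup_{[0,\eta)}|X|$ are more elementary---they come straight from the regularity of $0$ for $(-\infty,0)$ (a consequence of infinite variation) rather than from the scale function or the conditioning machinery---so your argument is more self-contained, while the paper's computation of the Laplace transform of $\eta$ yields extra distributional information as a by-product.

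One small point to be careful with: the continuity of the delay operator at $\eta=0$ is indeed true in $\mathbb D([0,\infty))$ (take $\mu_\eta(t)=t+\eta$ for $t\ge\eta$ and $\mu_\eta(t)=2t$ on $[0,\eta]$, then use $\post_0=0$ and right-continuity), but the phrasing ``on each $\mathbb D([0,T])$'' is slightly awkward since time changes on a fixed $[0,T]$ must fix the right endpoint. Working on $\mathbb D([0,\infty))$ throughout, and using the independence of $\eta$ and $\post$ (from Proposition~\ref{decomposition_minimum}(ii), since $\eta$ is measurable with respect to $\pre$) to pass from ``$\eta\to0$ in probability'' to ``$d_{J_1}(\mathrm{delay}_\eta\post,\post)\to0$ in probability'', makes the argument clean.
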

\begin{proof}
To prove this convergence, we follow the proof of \cite[Thm 2]{Chaumont2005}. According to it, thanks to the path decomposition obtained in Proposition \ref{decomposition_minimum}, to prove the convergence of $\pfl$ to $\p^\star$ as $x\to 0$ on the Skorokhod space, it is sufficient to prove that for $C>0$, both $\pfl(\eta>C)$ and $\pfl(S_\eta>C)$ vanish as $x\to0$ (we recall that $S$ is the past supremum process associated with $X$).

First, using Proposition \ref{decomposition_minimum}, under $\pfl$, the pre-minimum process has the law of $X$ under $\px$, killed when it reaches an uniform level in $[0,x]$. Then, for $x<C$,
$$\pfl(S_\eta>C)=\pfl\left(T_{[C,\infty)}<\eta\right)\leq\px\left(T_{[C,\infty)}<T_0\right)=\p(T_{[C-x,\infty)}<T_{-x}).$$
According to \cite[Thm VII.8]{Levy_processes}, there exists a continuous increasing function $W:[0,\infty]\rightarrow [0,\infty]$ (called \emph{scale function}) such that
$$\p(T_{[C-x,\infty)}<T_{-x})=1-\frac{W(C-x)}{W(C)}$$
and this quantity goes to 0 as $x\to0$
since $W$ is continuous.

We now want to prove that $\pfl(\eta>C)$ vanishes as $x\to0$. We first investigate the Laplace transform of $\eta$ under $\pfl$.
For $\lambda>0$, using again \eqref{tau_a&g}, we have
$$\efl\left[e^{-\lambda \eta}\right]=\lim_{a\to\infty}\ex\left[\left.e^{-\lambda g_a}\right|\tau_a<T_0\right].$$
Using the same notation and doing similar computations as those in \eqref{intouch}, we have
$$\ex\left[\left.e^{-\lambda g_a}\right|\tau_a<T_0\right]=\frac{\Esp\left[e^{-\lambda Y^a(\gamma_a)}\2{\gamma_a<x}\right]}{\px(\tau_a<T_0)}=\frac{v(a)}{v(a)+\ph^a(\lambda)}\frac{1-e^{-x(v(a)+\ph^a(\lambda))}}{1-e^{-xv(a)}}.$$
Then, as $a\to\infty$, we have $$\displaystyle\efl\left[e^{-\lambda\eta}\right]=\frac{1-e^{-x\ph(\lambda)}}{x\ph(\lambda)}\underset{x\to0}\longrightarrow1.$$
Hence, by Markov inequality, we have $$\pfl(\eta>C)\leq\frac{\efl\left[1-e^{-\lambda\eta}\right]}{1-e^{-\lambda\eta C}}\underset{x\to0}\longrightarrow0$$
and that ends the proof.
\end{proof}

\section*{Acknowledgments}
This work was supported by project MANEGE ANR-09-BLAN-0215
(French national research agency).
I want to thank my supervisor, Amaury Lambert,
for his very helpful advice.

My thanks
also to an anonymous referee for his/her careful check of this manuscript and helpful remarks.

\bibliographystyle{abbrv}
\bibliography{ma_biblio}
\end{document}